\newtheorem{definition}{Definition}
\newtheorem{theorem}{Theorem}
\newtheorem{lemma}{Lemma}
\newtheorem{remark}{Remark}
\newtheorem{corollary}{Corollary}
\DeclareMathOperator{\Hom}{Hom}
\DeclareMathOperator{\Dom}{Dom}
\title{A Lipschitz version of de Rham theorem for $L_p$-cohomology}
 \author[V. Gol{\textquotesingle}dshtain]{Vladimir Gol{\textquotesingle}dshtain}
 \address{V. Gol{\textquotesingle}dshtain: Department of Mathematics, Ben Gurion University of the Negev, P. O. Box 653, Beer Sheva, Israel}
 \email{vladimir@bgu.ac.il}
\author[R. Panenko]{Roman Panenko} 
\address{R. Panenko: Department of Mathematics, Ben Gurion University of the Negev, P. O. Box 653, Beer Sheva, Israel}
\email{panenkora@gmail.com}
\begin{document}
\begin{abstract}
We focus our attention on the de Rham operators' underlying properties which are specified by  intrinsic  effects of  differential geometry structures. 
And then we apply the procedure of regularization in the context of Lipschitz version of de Rham calculus  on metric  simplicial complexes with bounded geometry. 

\vspace{2mm}
\noindent
\textbf{Key words and phrases: } differential forms, Lipschitz analysis, de Rham complex, mollifier, metric simplicial complex, bounded geometry, de Rham theorem, Whitney form

\vspace{2mm}
\noindent
\textbf{Mathematic Subject Classification 2000: } 58A12, 58A15, 53C65, 57Q05, 57R12, 51F30, 46E30
\end{abstract}
\maketitle

\section{Introduction}
The reasons which lie at the roots of the present text could be abstracted as follows.  
Despite the fact that the notion of de Rham's regularization operators has a long history and some useful applications, primarily, it was 
a tool that allows to reduce cohomology of Banach chain complexes to the case which is more familiar and convenient  since it is presented by a subcomplex of  smooth objects.
It does not leave the impression of clarity.  
Indeed,  de Rham's initial exposition on the subject and further applications, focusing on the analytic aspect of the matter, seem to tend cloudy intrinsic elegance and simplicity of that construction.  Such situation inherently encourages us to reopen the discourse on the subject in order 
to embellish the prevailing approach  and see how far that construction could be generalized.     

Beginning with the first decades of  the 20th century when the basic notions of the exterior calculus were formulated thanks to \'{E}lie Cartan's works 
and further through  Georges de Rham's contribution one got the perfectly clear language to talk about global properties of manifolds. 
In particular, his explorations led up to emergence of the concept of so-called de Rham's complex, namely,  that work elucidated the analogies between 
differential forms and chains. One can notice that the concept of chain complex was quite known within the frames of algebraic topology and homological algebra that was being formed at that time.     
Also  we should attribute to  that period (around the thirties)  the de Rham's theorem establishing an isomorphism between the cohomogy of  differential forms  and the singular  cohomology.
Eventually, those reasons led up to the notion of current generalizing essential characteristics shared by both chains and forms. Later de Rham extensively developed  
the theory of currents involving as the foundation of Lauren Schwartz's work on distributions.   
That yielded subsequently the results on approximation of currents by  smooth forms  and required to introduce the regularization operators defined in the weak sense. 
It was regarded as relying on a duality between currents and compactly supported smooth forms. 
Let us take a closer look at the subject.  

There is the well-known idea to approximate a value of locally integrable function $f$ at every point with its mean value over a bounded neighbourhood of the such point. 
And more generally, using a convolution with a smooth kernel  $\varphi$    such that 
$$
\int_{\mathbb R^n} \varphi(x)dx  = 1,
$$
we can define a regularization operator 
$$
\Phi_{\varepsilon}(f) = f \ast \varphi_{\varepsilon}.
$$
Let $\mathscr D^{\prime}(\mathbb R^n)$ be a  space of continuous functionals on the space $C^{\infty}_{0}(\mathbb R^n)$ endowed with the usual topology.
The convolution $T\ast \varphi$ of a distribution $T \in \mathscr D^{\prime}(\mathbb R^n)$ and a function $\varphi(y) \in C^{\infty}_{0}(\mathbb R^n)$ is 
defined by 
$$
\{T\ast \varphi\}(x) = T(\varphi(x-y))
$$
that is 
the operator $T \mapsto \langle T,\, \tau_x \tilde{\varphi} \rangle$ where $\tilde{\varphi}(y) = \varphi(-y)$, $ \tau_x\varphi(y) =  \varphi(y+x)$. 

We can sum up that approach in the following way, see for example \cite{WR}.
\begin{theorem}
Let  $\varphi_{\varepsilon} \in C^{\infty}_{0}(\mathbb R^n)$ be a sequence of positive functions such that
$$
\int_{\mathbb R^n} \varphi_{\varepsilon}(x)dx  = 1
$$
and $\rm{supp}(\varphi_{\epsilon})$ is a ball with radius $\varepsilon$.
If  $T \in \mathscr D^{\prime}(\mathbb R^n)$  it follows that
$T\ast \varphi_{\varepsilon}  \in C^{\infty}(\mathbb R^n)$ and that  $T\ast \varphi_{\varepsilon} \to T$ in $\mathscr D^{\prime}(\mathbb R^n)$ as $\varepsilon \to 0$.
\end{theorem}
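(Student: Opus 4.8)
The plan is to prove the two assertions separately, in each case reducing the claim to the sequential continuity of $T$ on the Fréchet space $C^\infty_K(\mathbb{R}^n)$ of smooth functions supported in a fixed compact set $K$, where convergence means uniform convergence of every partial derivative. For the smoothness of $T\ast\varphi_\varepsilon$, I would fix $\varepsilon$ and a bounded open set $U$, and note that for $x\in U$ the function $y\mapsto\varphi_\varepsilon(x-y)$ is supported in the fixed compact $K=\overline{U}+\operatorname{supp}\varphi_\varepsilon$, so all the relevant test functions live in $C^\infty_K(\mathbb{R}^n)$. First I would show continuity of $x\mapsto\{T\ast\varphi_\varepsilon\}(x)$: as $h\to 0$ the translates $\varphi_\varepsilon(x+h-\cdot)$ converge to $\varphi_\varepsilon(x-\cdot)$ in $C^\infty_K$, using uniform continuity of each derivative $\partial^\alpha\varphi_\varepsilon$; continuity of $T$ then transfers this to $\{T\ast\varphi_\varepsilon\}$. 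Next I would establish differentiability by proving that the difference quotient $h^{-1}\bigl[\varphi_\varepsilon(x+he_j-\cdot)-\varphi_\varepsilon(x-\cdot)\bigr]$ converges in $C^\infty_K$ to $(\partial_j\varphi_\varepsilon)(x-\cdot)$ — a consequence of Taylor's theorem applied to each derivative — whence $\partial_{x_j}\{T\ast\varphi_\varepsilon\}(x)=T\bigl((\partial_j\varphi_\varepsilon)(x-\cdot)\bigr)=\{T\ast\partial_j\varphi_\varepsilon\}(x)$. Iterating this identity yields $T\ast\varphi_\varepsilon\in C^\infty(\mathbb{R}^n)$ together with the formula $\partial^\alpha(T\ast\varphi_\varepsilon)=T\ast\partial^\alpha\varphi_\varepsilon$.

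For the convergence $T\ast\varphi_\varepsilon\to T$ in $\mathscr D'(\mathbb{R}^n)$, I would fix a test function $\psi\in C^\infty_0(\mathbb{R}^n)$ and aim to show $\langle T\ast\varphi_\varepsilon,\psi\rangle\to\langle T,\psi\rangle$. The key is the duality identity $\langle T\ast\varphi_\varepsilon,\psi\rangle=\langle T,\tilde\varphi_\varepsilon\ast\psi\rangle$, with $\tilde\varphi_\varepsilon(y)=\varphi_\varepsilon(-y)$. To prove it I would write $\langle T\ast\varphi_\varepsilon,\psi\rangle=\int T(\varphi_\varepsilon(x-\cdot))\psi(x)\,dx$ and interchange $T$ with the integral, justified by approximating the integral by Riemann sums and checking that these sums converge, as functions of $y$, in $C^\infty_K$ for a fixed compact $K$; pulling $T$ outside then gives $T\bigl(\int\varphi_\varepsilon(x-\cdot)\psi(x)\,dx\bigr)=T(\tilde\varphi_\varepsilon\ast\psi)$. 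Since $\tilde\varphi_\varepsilon$ is again a mollifier (positive, total mass one, support in the ball of radius $\varepsilon$), the classical approximate-identity estimate shows $\tilde\varphi_\varepsilon\ast\psi\to\psi$ in $C^\infty_0(\mathbb{R}^n)$ — uniformly with all derivatives and with supports trapped in a fixed compact set. Continuity of $T$ then gives $\langle T,\tilde\varphi_\varepsilon\ast\psi\rangle\to\langle T,\psi\rangle$, as required.

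The main obstacle is not the analysis of the mollifier but the two places where the topology of $\mathscr D$ is genuinely used: justifying that difference quotients (for smoothness) and Riemann sums (for the duality identity) converge in the strong $C^\infty_K$ topology, that is, uniformly together with all derivatives on a common compact support. Once these convergences are secured, every remaining step is a direct appeal to the continuity of $T$, and the approximate-identity fact $\tilde\varphi_\varepsilon\ast\psi\to\psi$ is routine via Taylor expansion and uniform continuity of the derivatives of $\psi$.
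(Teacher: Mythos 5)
Your proposal is correct: the paper does not prove this theorem at all, quoting it as a known result with a pointer to Rudin's \emph{Functional analysis} (\cite{WR}), and your argument --- smoothness of $T\ast\varphi_\varepsilon$ via difference quotients converging in the $C^\infty_K$ topology, the duality identity $\langle T\ast\varphi_\varepsilon,\psi\rangle=\langle T,\tilde\varphi_\varepsilon\ast\psi\rangle$ justified by Riemann sums converging in $C^\infty_K$, and the approximate-identity convergence $\tilde\varphi_\varepsilon\ast\psi\to\psi$ in $\mathscr D(\mathbb R^n)$ followed by continuity of $T$ --- is precisely the classical proof found there. In short, your route matches the (cited) standard argument, and I see no gaps in it.
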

In the light of the above it is reasonable to talk about the regularization of  distribution in the following sense 
$$
\Phi_{\varepsilon}(T) = T \ast \varphi_{\varepsilon}.
$$
If $\varphi_{\varepsilon}$ is a symmetric kernel, that is $\varphi_{\varepsilon}(x) = \varphi_{\varepsilon}(-x)$, and $g  \in C^{\infty}_{0}(\mathbb R^n)$ 
we have
$$
\langle T\ast \varphi_{\varepsilon},\, g\rangle = \langle T,\, g \ast \varphi_{\varepsilon}\rangle. 
$$
Then we can define a regularization operator with a symmetric kernel on the space of distributions as follows
$$
\{\Phi_{\varepsilon}(T)\} (g) = T(\Phi_{\varepsilon}(g)).
$$

Let $M$ be a differentiable manifold and  let $(\Omega^{*}(M),\,d)$ 
be the de Rham $DG$-algebra (differential draded algebra) on $M$, that is the algebra of smooth differential forms. In particular, there is defined a chain complex
$$
\xymatrix{0\ar[r]&C^{\infty}(M) \ar[r] &\Omega^{1}(M)\ar[r]&\dots\ar[r]&\Omega^{n}(M)\ar[r]&0}.
$$
Following the de Rham approach we turn to the subcomplex of compactly supported forms and  its dual complex of currents.
Consistent with the above, we intend to define a regularization operator on  currents analogously to the case of distributions:
$$
R T[\omega] = T[R^*\omega].
$$
In line with what was said above there emerges a reasonable question how we should think of a procedure of computing the mean value of a differential form   $R^*\omega$.
It is quite clear that we need to define the operator under consideration in such a manner that preserves cohomology classes. 

First of all we should clarify the notion of homotopy.
Let $\mathsf{A}$ be an additive category. Consider the category of chain complexes $\mathsf{Ch(A)}$.  We can introduce the homotopy category of chain complexes $\mathsf{K(A)}$ taking into account 
a concept of `equivalent deformation' $\eta$ of morphisms ${f,\,g\in{\Hom}_{\mathsf{Ch(A)}}(V,\,W)}$
$$
\xymatrix{V \rtwocell<5>^{<1.5>f}_{<1.5>g}{\  \eta}& W}.
$$
Take a look at this construction in more detail. The homotopy $\eta$ we assume  to be a set of morphisms $\{\eta^i \in {\Hom}_{\mathsf{A}}(V^i,\,W^{i-1})\}$ which satisfy 
$$
f^i-g^i = d_{W}^{i-1}\eta^i + \eta^{i+1}d_{V}^{i}.
$$
It should be noted that we do not involve the requirement that $\eta$ is a chain morphism.  The condition of existence of such a homotopy between morphisms equips  the set  
${\Hom}_{\mathsf{Ch(A)}}(V,\,W)$ with an equivalence relation. And so $\mathsf{K(A)}$ can de introduced as a category of chain complexes with morphisms defined modulo homotopy. 
We can reveal the point by turning to the well-studied case of Abelian categories which are the classical setting for the treatment of homological algebra. It is not hard to see that homotopic morphisms 
induce the same morphism between the corresponding cohomology groups and every homotopy equivalence $f\colon V\to W$ defines the isomorphism of cohomologies.  
Thus
a two-sided invertible morphism in the category ${\sf K(A)}$ 
corresponds to an isomorphism of cohomologies. In particular, a homotopy equivalence between topological spaces induces isomorphism between the singular chain complexes 
in  ${\sf K(A)}$. That could be extracted as the essence of  the Poincar\'{e} lemma.

Another example of chain homotopy will serve as the central part of our interpretation of the regularization operators.
Assume that $X$ is a vector field, then Lie derivative ${\mathcal L}_X$ 
is the $0$-derivation of the $DG$-algebra 
such that there exists a $\text{-}1$-derivation $\iota_X$ being the homotopy between ${\mathcal L}_X$ and the zero map
$$
\xymatrix{(\Omega^{*}(M),\,d) \rrtwocell<8>^ {<1.5>0}_{<1.5>{\mathcal L}_X}{\ \ \iota_X}& &(\Omega^{*}(M),\,d)}.
$$
That is to say,  Lie derivative  satisfies Cartan's formula:
$$
{\mathcal L}_X = d \iota_X + \iota_X d.
$$
At every point $x$ the map  
$
\omega \mapsto {\mathcal L}_X \omega
$
induces a function with values in the exterior power of the cotangent space at $x$
$$
{\mathcal L}_X \omega_{x}  \colon \mathbb R \to \mathcal \bigwedge^n  T^*_x M
$$
 ${\mathcal L}_X \omega$ is precisely nothing else than the instantaneous velocity in the exterior power of the cotangent space, 
that is a magnitude of change of the form $ \omega$
under the infinitesimal translation along an integral curve $\phi_X(t)$.
That corresponds to the zero endomorphism 
of $(\Omega^{*}(M),\,d)$  
in the homotopy category of chain complexes. 
In other words, for every closed form $\omega$ the form ${\mathcal L}_X\omega$ is an exact form. 

We can compute the integral  of the function ${\mathcal L}_X \omega_{x}$. 
Owing to the linearity of integration and that fact that it commutes with the exterior differential 
we  can see that the following 
$$\int\limits_0^1 {\mathcal L}_X\omega dt = \phi^*_{t=1}\omega - \omega$$ 
implies the homotopy 
$$
\xymatrix{(\Omega^{*}(M),\,d) \rrtwocell<8>^ {<1.5>\phi^*_{t=1}}_{<1.5>\rm Id}{}& &(\Omega^{*}(M),\,d)}.
$$
That makes sense to talk about the procedure of regularization on differential forms. 
Namely the pullback of translation along vector fields preserves cohomology classes. And as a result,  combining 
the intrinsic attribute of smooth manifolds expressed in the Cartan's formula and the classic idea of mollifier 
we can define a form representing the mean value of the given differential form at every point of smooth manifold.   

Turning back to the de Rham's construction it is not hard to see that  the operator
$$
R T[\omega] = T[R^*\omega]
$$
defined on currents inherits the property to preserve cohomology classes 
\begin{align*}
RT[\omega] &= T[R^*\omega] = T[A^*d\omega + dA^*\omega] =  T[A^*d\omega] + T[dA^*\omega] \\
&= AT[d\omega]+ \partial T[A^*\omega] = \{\partial A T + A \partial T \}[\omega]. 
\end{align*}

The next step in that direction was made in the work \cite{GKS1}. Where authors focused on a special kind of currents which can be presented as elements
of Sobolev space of differential forms $\Omega_{p,\,p}^*(M)$. It is clear that being a special case of currents such forms hold all basic properties of the regularization. 
The crucial result consists in the proof that we have the same diagram
$$
\xymatrix{(\Omega_{p,\,p}^{*}(M),\,d) \rrtwocell<8>^ {<1.5>R}_{<1.5>\rm Id}{}& &(\Omega_{p,\,p}^{*}(M),\,d)}.
$$
in the category of chain complexes of Banach spaces. 
That allows us to generalize the de Rham's theorem to the case of $L_p$-cohomology of triangulable noncompact manifolds.

We call a simplicial complex $K$ having bounded geometry if every vertex of the $1$-skeleton of $K$ has a uniformly bounded degree as a vertex of graph 
and  the length of every edge is in the interval $[L^{-1},\,L]$ for some $L \ge 1$.  

We introduce a class of differential forms $S{\mathscr L}^{*}_p(K)$ on a simplicial complex $K$ 
which are locally images of smooth forms defined on a subsets of $\mathbb R^n$ under bi-Lipschitz homomorphisms and  
have a finite graph norm on the domain of  $S{\mathscr L}^{k}(K) \xrightarrow{d} S{\mathscr L}^{k+1}(K)$ in the sense of $L_p$-spaces. Let $\Omega^{*}_{p,\,p}({ K})$ denote the closure of that class under a topology induced from the graph norm.
The main result of the present  work can be summed up in two assertions:
\begin{itemize}
\item \emph{Let $K$ be a  complex of bounded geometry. Then there exists the diagram
$$
\xymatrix{ \dots\ar[r]^-{d}&\Omega^{k-1}_{p,\,p}({ K}) \ar[rr]^-{d}\ar[d]^-{\mathscr{R}}
&&\Omega^{k}_{p,\,p}({ K})\ar[rr]^-{d}\ar[lldd]_{\mathscr{A}}\ar[d]^-{\mathscr{R}}
&&\Omega^{k+1}_{p,\,p}({ K})\ar[lldd]_{\mathscr{A}}\ar[r]^-{d}\ar[d]^-{\mathscr{R}}&\dots\\
\dots &S{\mathscr L}^{k-1}_p (K)\ar@{_(->}[d]&&S{\mathscr L}^{k}_p(K)\ar@{_(->}[d]&&S{\mathscr L}^{k+1}_p(K)\ar@{_(->}[d]&\dots\\
\dots\ar[r]_-{d}&\Omega^{k-1}_{p,\,p}(K)\ar[rr]_-{d}&&\Omega^k_{p,\,p}(K)\ar[rr]_-{d}&& \Omega^{k+1}_{p,\,p}(K)\ar[r]_-{d}&\dots
}
$$
with commutative squares in the category of Banach spaces ${\sf Ban}_{\infty}$. Moreover, the map $\mathscr R$ is homotopic to the identity
$$
\xymatrix{(\Omega_{p,\,p}^{*}(M),\,d) \rrtwocell<8>^ {<1.5>\mathscr R}_{<1.5>\mathrm{Id}_{\Omega^*_{p,\,p}}}{\ \ \mathscr A}& &(\Omega_{p,\,p}^{*}(M),\,d)}.
$$}
\item \emph{Under the same conditions the following commutative triangle of isomorphisms takes place  in the category ${\sf Vec}_{\mathbb R}$
$$
\xymatrix@R = 1.5em @C = 0.1em{&H^k(S{\mathscr L}^*_p(K) )\arrow[rdd]^{\mathscr I}&\\
&&\\
H^k(\Omega^*_{p,p}(K))\arrow[ruu]^{\mathscr R}\arrow@{-->}[rr]&&H^k(C^*_p(K))}
$$}
\end{itemize}
It should notice that such complexes could be useful as a bi-Lipschitz triangulation of Riemannian manifolds with bounded geometry, see Appendix A.  
\newpage
\section{Homotopy and Lie Derivative}

Most of the content included in this section could be found in \cite{FW}.

Let $\mathscr C^{\infty}({\mathbb R}^n)$ be a ring of smooth functions on $\mathbb R^n$. 
The differentiation of smooth functions defines a derivation in $\mathscr C^{\infty}({\mathbb R}^n)$ with values in  $\mathscr C^{\infty}$-module consisting of $1$-forms on $\mathbb R^n$
$$
d \colon \mathscr C^{\infty}({\mathbb R}^n) \to  \Omega(\mathbb R^n),
$$
that is a homomorphism of the respective additive groups which satisfies the condition 
$$
d(fg) = fdg + gdf.
$$
As usual, we define all  operation in $\mathscr C^{\infty}({\mathbb R}^n)$ pointwisely. 
There are the well-known algebraic reasons which imply a number of facts about local structure of $\mathscr C^{\infty}({\mathbb R}^n)$.
Let $\mathscr{C}^{\infty}_x(\mathbb R^n)$ be the space of germs of smooth functions at a point $x$. 
Then $\mathscr{C}^{\infty}_x(\mathbb R^n)$ is a commutative local ring, that means that non-invertible elements, namely germs of functions which vanish at $x$, 
form a maximal ideal $\mathfrak m_x$. 
And in consequence the quotient ring $\mathscr{C}^{\infty}_x(\mathbb R^n)/\mathfrak m_x$ is a field. As a result we can conclude that 
$\mathfrak m_x/\mathfrak m_x^2$ is a vector space. Assume that $f \in \mathscr{C}^{\infty}_x(\mathbb R^n)$ then due to the Taylor's 
theorem we can write down the following 
$$
f - f(x) = \langle \nabla_x f,\, \sum_{i = 1}^n (\xi_i-\xi_i(x))\vec{e_i}\rangle + h,~h \in m_x^2.
$$
and as a result we obtain the representation of $f$ at the point $x$  as an element  of vector space   $\mathfrak m_x/\mathfrak m_x^2$
$$
f \mapsto f - f(x) \in \mathfrak m_x/\mathfrak m_x^2.
$$
To sumarize, fibers $x \mapsto \mathfrak m_x/\mathfrak m_x^2 $ specify a vector bundle.  Considering that components of the vector $f-f(x)$ change smoothly  on $\mathbb R^n$,
we obtain a vector field corresponding the element of  $\mathscr C^{\infty}({\mathbb R}^n)$. So we can sum up that  the procedure outlined above  allows us to define a derivation
 in the commutative ring  $\mathscr C^{\infty}({\mathbb R}^n)$ with values in  $\mathscr C^{\infty}$-module  consisting of  vector fields. As a consequence, 
 there exists a related derivation in $\mathscr C^{\infty}({\mathbb R}^n)$ with values in the dual $\mathscr C^{\infty}$-module consisting of $1$-forms
 $$
d \colon \mathscr C^{\infty}({\mathbb R}^n) \to  \Omega(\mathbb R^n).
$$
Now we can define  the de Rham $DG$-algebra on $\mathbb R^n$ as a graded algebra
$$
 \Omega^*({\mathbb R}^n) =  \bigoplus_{k=0}^n\bigwedge^k\Omega({\mathbb R}^n),~\bigwedge^0\Omega({\mathbb R}^n) =  \mathscr C^{\infty}({\mathbb R}^n)
$$
endowed with an antiderivation, i. e. an endomorphism satisfying the graded Leibnitz rule with a commutator factor $-1$,  which is specified as the exterior derivative $d$ in the usual sense: 
\begin{itemize}
\item   define the derivative in accordance with the  derivation in a ring \linebreak
${d \colon \mathscr C^{\infty}({\mathbb R}^n) \to  \Omega(\mathbb R^n)}$ for $0$-forms;
\item   $d^2 = 0$;
\item  $d(\omega\wedge\theta) = (d\omega)\wedge\theta + (-1)^{\deg\omega}\omega\wedge(d\theta)$.
\end{itemize}

Let $X$ be a vector field on $\mathbb R^n$ and  $\phi_X\colon {\mathbb R}^n \times {\mathbb R} \to {\mathbb R}^n$ be a corresponding flow. 

Consider a $\text{-}1$-antiderivation  on the graded algebra $\Omega^*({\mathbb R}^n)$ defined as the following.
\begin{definition}
Let $X$ be a vector field on $\mathbb R^n$. The interior product is a map 
$$
\iota_X \colon \Omega^n(\mathbb R^n) \to \Omega^{n-1}(\mathbb R^n)
$$
satisfying a number of conditions 
\begin{itemize}
\item[$\bullet$] If $\omega \in \Omega^1(\mathbb R^n)$ we put $\iota_X\omega = \langle\omega,\,X\rangle$, i. e. $\iota$ is the canonical pairing;
\item[$\bullet$] $\iota(\omega\wedge\theta) = (\iota_X\omega)\wedge\theta + (-1)^{\deg\omega}\omega\wedge(\iota_X\theta)$.
\end{itemize}

\end{definition}
\begin{definition}
Let $X$ be a vector field on $\mathbb R^n$. The Lie derivative is a map
$$
 {\mathcal L}_X \colon \Omega^n(\mathbb R^n) \to \Omega^{n}(\mathbb R^n)
$$
defined in the following way
$$
 {\mathcal L}_X\colon \omega \mapsto \frac{d}{dt}\bigg|_{t=0} \phi^*_X \omega.
$$
\end{definition}
\begin{theorem}Under the above assumptions the Cartan's magic formula holds: 
$$ {\mathcal L}_X\omega = \iota_X \circ d\omega + d \circ \iota_X \omega $$
\end{theorem}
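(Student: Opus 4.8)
The plan is to prove the identity by the standard \emph{derivation} argument: both sides are degree-zero derivations of the graded algebra $\Omega^*(\mathbb R^n)$ that commute with $d$, and two such derivations necessarily coincide once they agree on $0$-forms. Write $L_X := d\circ\iota_X + \iota_X\circ d$ for the right-hand side. The strategy splits into three parts: first, show $L_X$ and $\mathcal L_X$ are both degree-zero derivations commuting with $d$; second, verify $L_X f = \mathcal L_X f$ for every smooth function $f$; third, invoke a reduction lemma asserting that a degree-zero derivation commuting with $d$ is determined by its restriction to $0$-forms.

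First I would record the algebraic properties of $\mathcal L_X$. Since the flow satisfies $\phi_X^*(\omega\wedge\theta) = (\phi_X^*\omega)\wedge(\phi_X^*\theta)$ and $\phi_X^* d = d\,\phi_X^*$, differentiating at $t=0$ and applying the product rule yields
$$\mathcal L_X(\omega\wedge\theta) = (\mathcal L_X\omega)\wedge\theta + \omega\wedge(\mathcal L_X\theta), \qquad \mathcal L_X d = d\,\mathcal L_X,$$
so $\mathcal L_X$ is a degree-zero derivation commuting with $d$. For $L_X$, commutation with $d$ is immediate from $d^2=0$, since $d L_X = d\,\iota_X\, d = L_X d$. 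That $L_X$ is a genuine (not anti-) derivation of degree zero follows because it is the graded anticommutator of the two odd antiderivations $d$ and $\iota_X$: expanding $L_X(\omega\wedge\theta)$ with the Leibniz rules for $\iota_X$ and $d$, the factors $(-1)^{\deg\omega}$ produced by each antiderivation combine so that the two ``cross terms'' cancel in pairs, leaving exactly the ungraded Leibniz rule.

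Next I would check the two operators on $0$-forms. For a function $f$ one has $\iota_X f = 0$, there being no forms of negative degree, so
$$L_X f = \iota_X\,df = \langle df,\,X\rangle = Xf,$$
while $\mathcal L_X f = \frac{d}{dt}\big|_{t=0} (f\circ\phi_X) = Xf$ by the chain rule and the definition of the flow of $X$. Hence $L_X$ and $\mathcal L_X$ agree on $\bigwedge^0\Omega(\mathbb R^n)$.

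It remains to promote this agreement to all of $\Omega^*(\mathbb R^n)$, which is the step I expect to be the crux. In coordinates every $k$-form is a finite sum of monomials $f_0\,df_1\wedge\cdots\wedge df_k$, a consequence of the local structure of $\mathscr C^\infty(\mathbb R^n)$ recalled above, the $dx^i = d\xi_i$ being differentials of the coordinate functions. For any degree-zero derivation $D$ commuting with $d$, the Leibniz rule together with $D(df_j) = d(Df_j)$ gives
$$D\!\left(f_0\,df_1\wedge\cdots\wedge df_k\right) = (Df_0)\,df_1\wedge\cdots\wedge df_k + \sum_{j=1}^{k} f_0\,df_1\wedge\cdots\wedge d(Df_j)\wedge\cdots\wedge df_k,$$
so $D$ on such a monomial is completely determined by its values on the functions $f_0,\dots,f_k$. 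Applying this to $D = \mathcal L_X$ and $D = L_X$, which agree on all functions by the previous step, forces $\mathcal L_X = L_X$ on every monomial, hence on every form by linearity. The only genuine subtlety is the sign bookkeeping in the Leibniz computation of the second part; everything else reduces to the fact that a form is generated locally by functions and their differentials.
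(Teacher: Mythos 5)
Your proof is correct and follows essentially the same route as the paper's own sketch: show that both $\mathcal L_X$ and $d\iota_X+\iota_X d$ are $0$-derivations of the $DG$-algebra commuting with $d$, check agreement in low degree, and conclude by the reduction to monomials $f_0\,df_1\wedge\cdots\wedge df_k$. The only cosmetic difference is that you verify agreement on $0$-forms alone, which suffices since commutation with $d$ then forces agreement on the $df_j$ (the paper redundantly lists dimensions $0$ and $1$), and you spell out the sign cancellation showing the anticommutator of two antiderivations is a derivation, which the paper dismisses as ``almost evident.''
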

It could be proven by following the well-known idea, namely, one only needs  to check out a number of simple assertions:
\begin{itemize}
\item[$\bullet$] ${\mathcal L}_X$ satisfies Leibniz's law;
\item[$\bullet$] $[{\mathcal L}_X,\,d] = 0$;
\item[$\bullet$] The anticommutator $\{\iota_X,\,d\}$ is a $0$-derivation of $DG$-algebra and \linebreak $[\{\iota_X,\,d\},\,d] = 0$ (almost evident);
\item[$\bullet$] Now it just remains to check by  induction that a couple of derivations of $DG$-algebra, which agree for dimensions $n=0,\,1$ and commute with $d$, are equal for all dimensions.
\end{itemize}
\begin{corollary}
The Poincar\'{e} lemma 
$$
H^i(\Omega^*(U) )=0 \text{~for~} i < n, \text{~where $U$ is an open ball in $\mathbb R^n$},
$$
can be derived from the Cartan's formula.
\end{corollary}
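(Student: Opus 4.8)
The plan is to read the Poincar\'{e} lemma as the assertion that every closed form of positive degree on the open ball $U$ is exact, and to build the required primitive explicitly from a contracting radial flow by invoking the integrated version of Cartan's formula that was assembled just before the statement. Without loss of generality let $U$ be centred at the origin, and let $X$ be the inward radial field $X = -\sum_{i=1}^n \xi_i\,\partial_{\xi_i}$, whose flow is the linear contraction $\phi_t(x) = e^{-t}x$, so that $\phi_0 = \mathrm{Id}$ and $\phi_t$ collapses $U$ onto its centre as $t \to \infty$.

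First I would record the one-parameter family of pullbacks. The identity $\frac{d}{dt}\phi_t^*\omega = \phi_t^*\mathcal{L}_X\omega$ combined with the Cartan formula $\mathcal{L}_X = d\iota_X + \iota_X d$ proven above gives, after integrating in $t$ and using that pullback and the parameter integral both commute with $d$,
$$\phi_{t_1}^*\omega - \phi_{t_0}^*\omega = d\left(\int_{t_0}^{t_1}\phi_t^*\iota_X\omega\,dt\right) + \int_{t_0}^{t_1}\phi_t^*\iota_X(d\omega)\,dt.$$
This is precisely the homotopy between $\phi_{t=1}^*$ and $\mathrm{Id}$ displayed in the introduction, now with $\iota_X$ and the contracting flow of $X$ playing the roles of the homotopy operator and the two endpoints.

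Second, I would choose the endpoints so that one pullback drops out. With $t_0 = 0$ the left-hand term is $\mathrm{Id}$, and as $t_1 \to \infty$ the map $\phi_{t_1}$ tends to the constant map onto the centre, whose pullback annihilates every form of positive degree. Setting $\eta := \int_0^{\infty}\phi_t^*\iota_X\omega\,dt$, the displayed identity reduces, for a \emph{closed} form $\omega$ of degree $k \geq 1$, to $\omega = -\,d\eta$. Thus $\omega$ is exact and $H^k(\Omega^*(U)) = 0$ for every $k \geq 1$, which a fortiori covers the range $i < n$ of the statement; the excluded degree $k = 0$ is the elementary remark that a closed $0$-form on the connected set $U$ is constant, so $H^0 = \mathbb{R}$.

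The step demanding genuine care, and the one I expect to be the main obstacle, is the passage to the endpoint at $t = \infty$: one must check that the improper integral defining $\eta$ converges to a \emph{smooth} form on all of $U$. Here the exponential contraction is decisive. Writing $\iota_X\omega = -\sum_j \xi_j\,\iota_{\partial_{\xi_j}}\omega$ in coordinates, a pullback $\phi_t^*\iota_X\omega$ picks up one factor $e^{-t}$ from the component $\xi_j$ of $X$ and one factor $e^{-t}$ from each of the $k-1$ pulled-back differentials, while the coefficient functions, evaluated at $e^{-t}x \in U$, stay bounded on compact subsets; hence the integrand decays like $e^{-kt}$ uniformly on compacta, and the integral and all its derivatives converge. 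One may also sidestep the improper integral entirely by running the straight-line homotopy $H(x,s) = sx$ on $U \times [0,1]$ and applying the same identity on the product, the endpoint $s = 0$ supplying the constant map directly; the substance is identical. Once convergence and smoothness are secured, the vanishing of the positive-degree cohomology is immediate from the displayed homotopy.
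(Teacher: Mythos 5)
Your argument is correct and follows essentially the same route the paper intends: the corollary carries no separate proof, but the text immediately following it develops exactly the integrated Cartan identity $\int_0^1 \mathcal{L}_X\omega\,dt = \phi^*_{X}\big|_{t=1}\omega - \omega = Q\,d\omega + d\,Q\omega$ that you invoke, specialized to a flow contracting the ball, so that the homotopy operator built from $\iota_X$ produces the primitive of a closed form. Your only deviation is cosmetic rather than structural — you run the radial flow $\phi_t(x)=e^{-t}x$ out to $t=\infty$ and correctly verify the $e^{-kt}$ decay making the improper integral smooth, whereas the paper's finite-parameter formulation on $[0,1]$ (equivalently your own straight-line alternative $H(x,s)=sx$) sidesteps that convergence check — and you rightly flag that the argument yields vanishing in all positive degrees, with $H^0(\Omega^*(U))=\mathbb{R}$ in degree zero.
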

Let us consider the change of the form $\omega$ along a segment  \linebreak
${\phi_X(x,\,t)\colon [0,\,1]\to {\mathbb R}^n}$ of the integral curve which  starts at the point $x$.
A  parametrized differential  form ${\phi^{*}_{X}\omega(t) \in \Omega^{k}({\mathbb R}^n)}$ defines a family of multilinear skew-symmetric maps 
on the tangent  space $T_x\mathbb R^n$
for an integral curve that starts at the point $x$:
$$
f(t)A^0_x(t)\wedge \dots \wedge A^{k-1}_x(t)\colon \bigwedge^k{\mathbb R^n} \to {\mathbb R},~A^i_x(t) \in ({\mathbb R}^n)^*
$$
that is, there is specified function
$$
F(t) = f(t)\det (A^i_x(t)\xi_j)
$$
at every $\xi_0\wedge\dots\wedge \xi_{k-1}$, 
and so we can define $\frac{d}{dt} F(t)$ and $\int\limits^{1}_{0}  F'(t) dt$.

That induces a couple  of maps:
$$
{\mathcal L}_{X}\omega(t) \colon \bigwedge^k{\mathbb R^n} \to {\mathbb R},~\text{at every point $t$}
$$
where 
$$
\{{\mathcal L}_{X}\omega(t)\}(x) = {\mathcal L}_{X(\phi_t(x))}\{\phi_X^*(t)\}\omega
$$
and
$$
\int\limits^{1}_{0} {\mathcal L}_{X}\omega(t) dt \colon \bigwedge^k{\mathbb R^n} \to {\mathbb R}.
$$

Then we have 
\begin{align*}
\int\limits_0^1 {\mathcal L}_{X}\omega dt =& \phi_X^*\big|_{t=1}\omega - \omega \\
= \int\limits_0^1 \iota_{X(\phi_t(x))} \circ \{\phi^*_X(t)\} (d \omega ) dt
&+ d  \bigg(\int\limits_0^1 \iota_{X(\phi_t(x))}  \circ \{\phi^*_X(t)\} (\omega)  dt\bigg)
\end{align*}

\begin{definition} 
Let $v \in {\mathbb R}^n$. Define an associated flow:
$$
s_v\colon {\mathbb R}^n\times {\mathbb R} \to {\mathbb R}^n 
$$
as the translation along $v$:
$$
s_{tv}(x) = x+tv.
$$
\end{definition}
 Then we can use  Cartan's  formula 
 $$
  s^*_v\omega -  \omega = Q_{v} d\omega + d Q_{v} \omega
  $$
 where
  $$
  Q_{v} = \left\{ \int\limits^{1}_{0} dt\right\} \circ \iota_{v} \circ \phi_X^*(t).
 $$
 \section{de Rham operators on $\Omega^{*}({\mathbb R}^n)$}
Let $f\colon {\mathbb R}^n \to {\mathbb R}$ be a compactly supported smooth function such that \linebreak
${ {\rm supp}(f) \subset B_1}$, 
${\int_{{\bf R}^n}}f(v)dv^0 \dots dv^{n-1} = 1$, $f(v)\ge 0$ and ${f(v) = f(-v)}$. Let us put $\tau(v) = f(v)dv^0 \dots dv^{n-1}$.
Using the previous argumentation, we can integrate the equation 
$$
s^*_v \omega \cdot \tau(v)- \omega \cdot \tau(v) = d Q_v(\omega) \cdot \tau(v)+ Q_v(d\omega) \cdot \tau(v)
$$
at every point $x$, namely this integration procedure is induced by integration of forms of the following type  $g(v)\det(\langle A^i(v),\, * \rangle)\tau$, 
where \linebreak ${\omega(x) = g(v)A^0(v)\wedge \dots \wedge A^k(v)(*\wedge \dots \wedge *)}$:
$$
\int\limits_{{\mathbb R}^n}\left(s^*_v \omega(x) \cdot \tau(v)- \omega(x) \cdot \tau(v)\right) = \int\limits_{{\mathbb R}^n}\left(d Q_v(\omega(x)) \cdot \tau(v)+ Q_v(d\omega(x)) \cdot \tau(v)\right),
$$
that is what allows us to specify a chain homotopy:
$$
\mathcal{R}( \omega) - \omega = d \mathcal{A(\omega)} + \mathcal{A}( d\omega).
$$

There is a diffeomorphism $h$ of ${\mathbb R}^n$ onto the open ball ${\bf B}_1$ with centre $0$ and radius $1$. Let $U\subset {\mathbb R}^n$  and ${\bf B}_1 \subset U$.
We can define $\mathfrak s_v\colon U \to U$
$$
\mathfrak s_v x =\begin{cases}
hs_vh^{-1}(x),~ \text{if $x \in {\bf B}_1$};\\
x,~ \text{if $x \notin {\bf B}_1$}.
\end{cases}
$$

It was shown in de Rham's book \cite{dR} that  $\mathfrak s^*_{tv}$ produces a group action of the additive group of real numbers on $U$, that is, 
$\mathfrak s^*_{(t_0+t_1)v} = \mathfrak s^*_{t_1v}\circ \mathfrak s^*_{t_0v}$.
And also we can say that  ${{\frak X}_v = d_{h^{-1}(x)}h(v)}$ is a vector field  consisted of tangent vectors to $\mathfrak s_{tv} (x)$, then 
we have 
$$
\frac{d}{dt}\Biggl|_{t=0}\mathfrak s^*_{tv}  \omega =  d\circ \iota_{\frak X_v}(\omega) + \iota_{\frak X_v} \circ d(\omega);
$$
and
$$
\mathcal{R}_{\varepsilon}( \omega) - \omega = d \mathcal{A}_{\varepsilon}(\omega) + \mathcal{A}_{\varepsilon}( d\omega).
$$
where
$$
\mathcal{R}_{\varepsilon} \omega= \int\limits_{{\mathbb R}^n}\mathfrak s^{*}_{\varepsilon v} \omega(x) \cdot \tau(v),~
\mathcal{A}_{\varepsilon}(\omega)  =  \int\limits_{{\mathbb R}^n} \left(\int\limits^{1}_{0} \iota_{\frak X_{\varepsilon v}(\mathfrak s_{\varepsilon vt}(x))}
(\mathfrak s^{*}_{\varepsilon vt}\omega) dt\right) \cdot \tau(v).
$$
It was shown \cite{GKS1} that 
\begin{lemma}
For every $\varepsilon > 0$  the maps $\mathcal{R}_{\varepsilon}$ and  $\mathcal{A}_{\varepsilon}$ are bounded on $\Omega_p^k({\bf B_1})$ with respect to the $L_p$-norm
 and moreover the following estimations hold
$$
\|\mathcal{R}_{\varepsilon}\|_p\le C(\varepsilon),
$$
$$
\|\mathcal{A}_{\varepsilon}\|_p\le M(\varepsilon);
$$
where $C(\varepsilon) \to 1$, $M(\varepsilon) \to 0$ as $\varepsilon \to 0$.
\end{lemma}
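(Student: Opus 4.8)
The plan is to reduce both operator bounds to pointwise (fiberwise) estimates for the pullback and the interior product, and then to integrate against the probability density $f$ by means of the generalized Minkowski (integral) inequality, exploiting the normalization $f\ge 0$, $\int f=1$ and $\mathrm{supp}(f)\subset B_1$. Throughout I would lean on the facts quoted above from de Rham's book: that $\{\mathfrak s_{tv}\}$ is a genuine one‑parameter group acting smoothly on $U$, and that $\mathfrak X_v=d_{h^{-1}(x)}h(v)$ is a bounded smooth vector field supported in $\mathbf B_1$ (it vanishes smoothly as $x\to\partial\mathbf B_1$, since $h^{-1}(x)\to\infty$ there and the translation $s_v$ becomes imperceptible in the image). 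This boundedness is precisely what salvages the estimates near the boundary.

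First I would record the basic $L_p$ estimate for the pullback of a $k$-form under a diffeomorphism $\psi$ of $\mathbf B_1$. Pointwise one has $|\psi^*\omega(x)|\le |\Lambda^k D\psi(x)|\,|\omega(\psi(x))|$, where $|\Lambda^k D\psi(x)|$ denotes the norm of the induced map on $k$-vectors; a change of variables $y=\psi(x)$ then gives
$$\|\psi^*\omega\|_{L_p}^p \le \Big(\sup_y |\Lambda^k D\psi(\psi^{-1}(y))|^p\, |\det D\psi^{-1}(y)|\Big)\,\|\omega\|_{L_p}^p,$$
so the pullback has a finite operator norm $\|\psi^*\|_{\mathrm{op}}$ on $L_p$-forms controlled by the Jacobian-type quantity in parentheses. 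Applying Minkowski's inequality to $\mathcal R_\varepsilon\omega=\int_{\mathbb R^n}\mathfrak s^*_{\varepsilon v}\omega\cdot\tau(v)$ yields
$$\|\mathcal R_\varepsilon\omega\|_{L_p} \le \int_{\mathbb R^n}\|\mathfrak s^*_{\varepsilon v}\omega\|_{L_p}\,f(v)\,dv \le \Big(\sup_{|v|\le 1}\|\mathfrak s^*_{\varepsilon v}\|_{\mathrm{op}}\Big)\|\omega\|_{L_p}.$$
I would set $C(\varepsilon)$ equal to this supremum and observe that, since $\mathfrak s_{\varepsilon v}\to\mathrm{Id}$ and $D\mathfrak s_{\varepsilon v}\to\mathrm{Id}$ uniformly in $x$ and in $|v|\le 1$ as $\varepsilon\to 0$, both $|\Lambda^k D\mathfrak s_{\varepsilon v}|$ and the Jacobian factor tend to $1$; hence $C(\varepsilon)\to 1$.

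For $\mathcal A_\varepsilon$ the decisive observation is that $\mathfrak X_{\varepsilon v}=\varepsilon\,\mathfrak X_v$ by linearity of the differential, so the contracting vector field carries an explicit factor of $\varepsilon$. Using $|\iota_Y\alpha|\le |Y|\,|\alpha|$ pointwise together with Minkowski's inequality in both the $v$- and $t$-variables,
\begin{align*}
\|\mathcal A_\varepsilon\omega\|_{L_p}
&\le \varepsilon\int_{\mathbb R^n}\!\int_0^1 |\mathfrak X_v|_\infty\,\|\mathfrak s^*_{\varepsilon v t}\omega\|_{L_p}\,dt\,f(v)\,dv\\
&\le \varepsilon\,\Big(\sup_{|v|\le 1}|\mathfrak X_v|_\infty\Big)\Big(\sup_{|v|\le 1,\,t\in[0,1]}\|\mathfrak s^*_{\varepsilon v t}\|_{\mathrm{op}}\Big)\|\omega\|_{L_p}.
\end{align*}
Taking $M(\varepsilon)$ to be $\varepsilon$ times the two bounded suprema gives $M(\varepsilon)=O(\varepsilon)\to 0$.

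The main obstacle is the uniform control required in these suprema. Because $h$ degenerates as one approaches $\partial\mathbf B_1$ (its inverse blows up there), a priori the pullback operator norms and $|\mathfrak X_v|$ could be unbounded near the boundary, which would invalidate the change-of-variables step. The crux is therefore to invoke the group-action property to show that $D\mathfrak s_{\varepsilon v}$, its Jacobian, and $\mathfrak X_v$ extend continuously (indeed smoothly, with the correct vanishing) to $\overline{\mathbf B_1}$, so that the suprema are finite and, for the pullback, converge to $1$ uniformly over $|v|\le 1$. Once this uniformity is secured, the remainder is a routine combination of Minkowski's inequality, the pointwise interior-product bound, and the change-of-variables formula.
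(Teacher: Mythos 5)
Your proposal is correct in outline, but note a peculiarity of the comparison: the paper itself contains no proof of this lemma --- it is quoted verbatim from the reference [GKS1] (``It was shown \cite{GKS1} that\dots''), and the analytic properties of $\mathfrak s_v$ it rests on are delegated to de Rham's book [dR]. Your reconstruction --- Minkowski's integral inequality against the probability density $f$, the change-of-variables bound $\|\psi^*\omega\|_{L_p}^p\le \bigl(\sup_y|\Lambda^k D\psi(\psi^{-1}(y))|^p\,|\det D\psi^{-1}(y)|\bigr)\|\omega\|_{L_p}^p$ for the pullback, the pointwise bound $|\iota_Y\alpha|\le|Y|\,|\alpha|$, and the explicit factor $\varepsilon$ extracted from $\mathfrak X_{\varepsilon v}=\varepsilon\,\mathfrak X_v$ --- is exactly the standard argument behind the cited result, and it correctly produces $C(\varepsilon)\to 1$ and $M(\varepsilon)=O(\varepsilon)\to 0$. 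You also correctly identify the genuine crux, namely uniform control of $D\mathfrak s_{\varepsilon v}$, its Jacobian, and $\mathfrak X_v$ up to $\partial{\bf B}_1$, where $h^{-1}$ blows up.

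One attribution in your closing paragraph should be tightened: the group law $\mathfrak s_{(t_0+t_1)v}=\mathfrak s_{t_1 v}\circ\mathfrak s_{t_0 v}$ is an immediate consequence of conjugating translations by $h$ and by itself yields no derivative bounds whatsoever; it is not what ``salvages the estimates near the boundary.'' What does the work in [dR] is the \emph{specific choice} of the diffeomorphism $h$, made so that $(x,v)\mapsto\mathfrak s_v(x)$ extends to a jointly smooth map on $\overline{\bf B}_1\times\{|v|\le 1\}$ equal to the identity at $v=0$ and across $\partial{\bf B}_1$; compactness of that domain then gives the uniform convergence $D\mathfrak s_{\varepsilon v}\to\mathrm{Id}$ and the finite suprema your two displayed estimates require. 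For a generic diffeomorphism $h$ of ${\mathbb R}^n$ onto ${\bf B}_1$ these suprema can genuinely be infinite, so this input cannot be recovered from the group property and must be cited (or $h$ constructed explicitly). With that substitution your argument is complete and matches the proof strategy of [GKS1].
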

From now on, we will use ${\rm res}_{V,\,U}$ for the restriction map $\Omega^k(U) \to \Omega^k(V)$ where $V \subset U$ if it is well-defined in the context of consideration. 
\begin{lemma}
Let ${\bf B}_1$ be a closed ball in $\mathbb R^n$ with centre $0$ and radius $1$, ${\bf B}_1 \subset U\subset {\mathbb R}^n$.
Then for every $\varepsilon > 0$ and any compact $F \subset {\rm Int\,}{\bf B}_1$  the map ${\rm res}_{F,\,U} \circ \mathcal{R}_{\varepsilon}$ is a bounded 
operator $\Omega^{k}_{p}(U) \to \Omega^k_{\infty}(F)$. 
\end{lemma}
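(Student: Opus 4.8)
The plan is to exhibit $\mathcal{R}_{\varepsilon}$, on the interior region where $F$ lives, as an integral operator with a bounded, compactly supported kernel, and then to apply H\"older's inequality componentwise. First I would unwind the definition. For $x \in {\rm Int}\,{\bf B}_1$ one has $\mathfrak s_{\varepsilon v}(x) = h\bigl(h^{-1}(x) + \varepsilon v\bigr)$, so $v \mapsto \mathfrak s_{\varepsilon v}(x)$ is a diffeomorphism, being the affine map $v \mapsto h^{-1}(x)+\varepsilon v$ composed with the diffeomorphism $h$. Writing $\omega = \sum_I \omega_I\, dy^I$ in coordinates, the pullback is
$$
\bigl(\mathfrak s^*_{\varepsilon v}\omega\bigr)_x = \sum_{I,J} \omega_I\bigl(\mathfrak s_{\varepsilon v}(x)\bigr)\, m^I_J\bigl(D\mathfrak s_{\varepsilon v}(x)\bigr)\, dx^J,
$$
where $m^I_J$ denotes the relevant $k\times k$ minor of the Jacobian. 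Thus each component of $\mathcal{R}_{\varepsilon}\omega(x)$ is a finite sum of integrals $\int_{{\mathbb R}^n}\omega_I\bigl(\mathfrak s_{\varepsilon v}(x)\bigr)\, m^I_J\bigl(D\mathfrak s_{\varepsilon v}(x)\bigr)\, f(v)\, dv$.

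Next I would change variables $y = \mathfrak s_{\varepsilon v}(x)$, equivalently $v = \varepsilon^{-1}\bigl(h^{-1}(y)-h^{-1}(x)\bigr)$, whose Jacobian factor is $\varepsilon^{-n}\lvert\det D h^{-1}(y)\rvert$. This turns each integral into $\int K^{I}_{J,\varepsilon}(x,y)\,\omega_I(y)\,dy$ with kernel
$$
K^{I}_{J,\varepsilon}(x,y) = m^I_J\bigl(D\mathfrak s_{\varepsilon v}(x)\bigr)\, f\!\left(\tfrac{1}{\varepsilon}\bigl(h^{-1}(y)-h^{-1}(x)\bigr)\right)\varepsilon^{-n}\bigl\lvert\det D h^{-1}(y)\bigr\rvert,
$$
where $D\mathfrak s_{\varepsilon v}(x)$ is now read as a function of $(x,y)$. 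The support of $K^I_{J,\varepsilon}(x,\cdot)$ lies in $h\bigl(B_\varepsilon(h^{-1}(x))\bigr)$.

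The crucial point — and the place where the hypothesis that $F$ be compact in the interior is used — is the boundedness of this kernel. As $x$ ranges over $F$ and $v$ over ${\rm supp}\,f \subset B_1$, the points $h^{-1}(x)+\varepsilon v$ stay in the compact set $h^{-1}(F) + \bar B_\varepsilon \subset {\mathbb R}^n$, and the corresponding $y$ stay in a compact subset of the open ball ${\bf B}_1$ bounded away from $\partial {\bf B}_1$. On such compact sets the minors $m^I_J(D\mathfrak s_{\varepsilon v})$, the value of $f$, and $\det D h^{-1}$ are all bounded for fixed $\varepsilon$, so $\lvert K^I_{J,\varepsilon}(x,y)\rvert \le C(\varepsilon)$ with $y$-support of uniformly bounded measure; hence $\sup_{x\in F}\lVert K^I_{J,\varepsilon}(x,\cdot)\rVert_{L_{p'}} =: M(\varepsilon, F) < \infty$, where $p'$ is the conjugate exponent. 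Applying H\"older's inequality componentwise gives $\lvert \mathcal{R}_{\varepsilon}\omega(x)\rvert \le M(\varepsilon,F)\,\lVert\omega\rVert_{L_p(U)}$ for a.e. $x \in F$, and taking the supremum over $F$ yields $\lVert{\rm res}_{F,\,U}\,\mathcal{R}_{\varepsilon}\omega\rVert_{\Omega^k_\infty(F)} \le M(\varepsilon,F)\lVert\omega\rVert_{\Omega^k_p(U)}$, as claimed.

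The main obstacle is precisely controlling the kernel near the boundary of ${\bf B}_1$: the inverse diffeomorphism $h^{-1}$ has unbounded derivatives as $y \to \partial {\bf B}_1$, since $h$ compresses all of ${\mathbb R}^n$ into the open ball, so the argument would fail for an $F$ touching the boundary. Restricting to $F$ compact in the interior confines all evaluations to a compact subset of ${\bf B}_1$ and removes this difficulty; this also explains why the target is the genuinely stronger $L_\infty$ norm rather than $L_p$. A secondary technical point is that $\omega \in \Omega^k_p(U)$ need not be continuous, so the pointwise change of variables and the Fubini/H\"older manipulations should be justified first on smooth forms and then extended by the density defining $\Omega^k_p$, or argued directly from the measurability of $K^I_{J,\varepsilon}$ together with the standard $L_p$ change-of-variables theorem.
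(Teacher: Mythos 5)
Your proposal is correct and takes essentially the same route as the paper: both arguments reduce the claim to H\"older's inequality applied to the mollifying integral, with the compactness of $F \subset {\rm Int}\,{\bf B}_1$ ensuring that all geometric factors coming from $h$ and $\mathfrak s_{\varepsilon v}$ stay bounded. If anything, your explicit kernel and change-of-variables bookkeeping is more complete than the paper's one-line estimate, whose stated constant $C = {\rm mes}({\rm supp}(f))^{p-1}(\sup f)^p$ silently absorbs the Jacobian minors of $\mathfrak s_{\varepsilon v}$ and the factor $\varepsilon^{-n}\lvert\det Dh^{-1}\rvert$ that you track, and your closing remarks correctly identify why the hypothesis on $F$ is needed and why density/measurability handles non-smooth $\omega$.
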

\begin{proof}
Let $\omega \in \Omega^{k}_{p}(U)$ then $|\mathcal{R}_{\varepsilon} \omega|$ is a smooth function that  implies  it is bounded and there is a point $\xi$ such that 
$\sup_{x \in F} |\mathcal{R}_{\varepsilon} \omega|(x) = |\mathcal{R}_{\varepsilon} \omega|(\xi)$.
$$
|\mathcal{R}_{\varepsilon} \omega|^p(\xi) \le \big (\int\limits_{{\rm supp}(f) } |\mathfrak s^{*}_{\varepsilon v} \omega|(\xi) \cdot \tau(v)\big)^p
$$
$$
\le C \int\limits_{{\rm supp}(f) } |\mathfrak s^{*}_{\varepsilon v} \omega|^p(\xi) dv^0 \dots dv^{n-1} \le C \|\omega\|^p_{\Omega^k_{p}(U)}
$$
where $C = {\rm mes}({\rm supp}(f) )^{p-1} (\sup_{x \in {\rm supp}(f)} f)^p$
\end{proof}
It follows that taking the closure of $\Omega^k({\bf B_1})$ with respect to the $L_p$-norm induces bounded maps on Banach spaces  $\Omega_p^k({\bf B_1})$ and there exists the diagram 
$$
\xymatrix{ \dots\ar[r]^-{d}&\Omega^{k-1}_p({\bf B}_1) \ar[r]^-{d}\ar[d]_{\mathcal{R}_{\varepsilon}}
&\Omega^{k}_p({\bf B}_1)\ar[r]^-{d}\ar[ld]_{\mathcal{A}_{\varepsilon}}\ar[d]_{\mathcal{R}_{\varepsilon}}
&\Omega^{k+1}_p({\bf B}_1)\ar[ld]_{\mathcal{A}_{\varepsilon}}\ar[r]^-{d}\ar[d]^{\mathcal{R}_{\varepsilon}}&\dots\\
\dots\ar[r]_-{d}&\Omega^{k-1}_p ({\bf B}_1)\ar[r]_-{d}&\Omega^{k}_p({\bf B}_1)\ar[r]_-{d}&\Omega^{k+1}_p({\bf B}_1)\ar[r]_-{d}&\dots
}
$$
with commutative squares and such morphisms that  the equation bellow holds
$$
\mathcal{R}( \omega) - \omega = d \mathcal{A(\omega)} + \mathcal{A}( d\omega).
$$
Consider bi-Lipschitz homomorphism 
$
\varphi \colon {\bf B}_1 \to B \subset {\bf R}^n
$
then we can define operators $\tilde{\mathcal R_{\varepsilon}}$ and $\tilde{\mathcal A_{\varepsilon}}$:

\begin{equation*}
\begin{split}
\xymatrix{
\Omega^k_{p}(B)\ar[r]^-{\tilde{\mathcal R_{\varepsilon}}}\ar[d]_{\varphi^*}&\bigwedge^k \Omega_{\mathscr L}(B)\\
\Omega^k_{p}({\bf B}_1)\ar[r]_-{\mathcal R_{\varepsilon}}&\Omega^k_{\rm smooth}({\bf B}_1)\ar[u]_{(\varphi^{-1})^*}
}
\end{split}
~~~~~~~~~~~~~~
\begin{split}
\xymatrix{
\Omega^k_{p}(B)\ar[r]^-{\tilde{\mathcal A_{\varepsilon}}}\ar[d]_{\varphi^*}& \Omega^{k-1}_{p}(B)\\
\Omega^k_{p}({\bf B}_1)\ar[r]_-{\mathcal A_{\varepsilon}}&\Omega^{k-1}_{p}({\bf B}_1)\ar[u]_{(\varphi^{-1})^*}
}
\end{split}
\end{equation*}
It is not hard to see that the commutative squares are squares  in the category of normed Banach spaces because all arrows are bounded maps and moreover we have
$$
\|\tilde{\mathcal{R}_{\varepsilon}}\|_p\le \tilde{C}(\varepsilon),
$$
$$
\|\tilde{\mathcal{A}_{\varepsilon}}\|_p\le \tilde{M}(\varepsilon).
$$

Just as we did above  take the closure of $\Omega^k({B})$ with respect to the $L_p$-norm it induces bounded maps on Banach spaces  $\Omega_p^k({ B})$ and there exists the diagram 
$$
\xymatrix{ \dots\ar[r]^-{d}&\Omega^{k-1}_p({ B}) \ar[rr]^-{d}\ar[d]^-{\tilde{\mathcal{R}_{\varepsilon}}}
&&\Omega^{k}_p({ B})\ar[rr]^-{d}\ar[lldd]_{\tilde{\mathcal{A}_{\varepsilon}}}\ar[d]^-{\tilde{\mathcal{R}_{\varepsilon}}}
&&\Omega^{k+1}_p({ B})\ar[lldd]_{\tilde{\mathcal{A}_{\varepsilon}}}\ar[r]^-{d}\ar[d]^-{\tilde{\mathcal{R}_{\varepsilon}}}&\dots\\
\dots &\bigwedge^{k-1} \Omega_{\mathscr L}(B)\ar@{_(->}[d]&&\bigwedge^k \Omega_{\mathscr L}(B)\ar@{_(->}[d]&&\bigwedge^{k+1} \Omega_{\mathscr L}(B)\ar@{_(->}[d]&\dots\\
\dots\ar[r]_-{d}&\Omega^{k-1}_{p}(B)\ar[rr]_-{d}&&\Omega^k_{p}(B)\ar[rr]_-{d}&& \Omega^{k+1}_{p}(B)\ar[r]_-{d}&\dots
}
$$
with commutative squares and such morphisms that  the equation bellow holds
$$
\tilde{\mathcal{R}_{\varepsilon}}( \omega) - \omega = d \tilde{\mathcal{A_{\varepsilon}}}(\omega) + \tilde{\mathcal{A_{\varepsilon}}}( d\omega).
$$

\section{Classes of differential forms on a metric simplicial complex}
Denote by ${\sf Fin}_{+}$ the category of finite nonempty sets and partial maps and by {\sf Set} the usual category of sets.
A simplicial complex $K$ can be defined as a functor
$$
{\sf K} \colon {\sf Fin}_{+}^{\sf op}\to {\sf Set}
$$
where ${\sf Fin}_{+}^{\sf op}$ is the opposite category of ${\sf Fin}_{+}$. Fixe some set $V$ and put
$$
K([n]) = \{ \rho \colon [n] \to V \mid \rho ~\text{is a partial injective function}\}
$$
In other words, elements of ${\sf K}([n]) =  K[n]$ serve as indices to $n$-simplices  and 
$$
f\colon [m] \to [n]
$$
induces the embedding of faces of $K$
$$
K(f)\colon K[n] \to K[m]
$$
as follows
$$
K(f)\langle \rho \rangle = \rho \circ f.
$$

The condition below was introduced in \cite{GKS2} in the context of studying noncompact Riemannian manifolds and its proper (in the sence of metric geometry) triangulations.  
\begin{definition}
We will call $K$ star-bounded if there exists $C>0$ such that 
for every $v \in K[0]$ the cardinality of a set
$$
\Psi_v  = \{ \iota \in {\Hom}(K[0],\,K[1]) \mid v \in \Dom \iota\}	
$$
satisfies the following 
$$
|\Psi_v|\le C.
$$

\end{definition}
Define a geometric realisation of the simplicial complex $K$ as a topological space
$$
|K| = \coprod_{i=0}^{n} (\Delta_i \times K[i])/ \sim
$$
where
$$
\Delta_n =\left\{(t_0,\dots,\,t_n)\in {\mathbb R}^{n+1}\bigg|
\sum\limits_{i = 0}^{n-1} t_i = 1,~t_i\ge 0\right\}
$$
and $\sim$ is an equivalence relation defined by gluing of simplices. 
We also can endow $|K|$ with the simplicial metric, that is Euclidian on each simplex.
From now on, we will follow the terminology of \cite{DK}.  Let every simplex of $K$ be isometric to the standard simplex in the Euclidian space.
Thus  each morphism 
$$
[k] \to [m]
$$
induces for each $m$-simplex $\sigma^m$ an isometric embedding of its face $\sigma^k$
$$
\sigma^k \to \sigma^m
$$ 
Also how was mentioned in \cite{DK}  we can introduce a length-metric on $K$ in such a manner that each simplex is isometrically embedded  in $K$. 
In  more detail, a piecewise-linear path $\gamma\colon [a,\,b] \to K$ is a path such that  its domain can be broken into finitely many intervals $[a_i,\,a_{i+1}]$ so that 
the image $\gamma([a_i,\,a_{i+1}])$ is a piecewise-linear path contained in a single simplex of $K$. The length of $\gamma$ is defined using Euclidian metric on simplices of $K$
$$
|\gamma| = \sum\limits_i |\gamma([a_i,\,a_{i+1}])|
$$
and so we can define the distance  as follows
$$
d(x,\,y) = \inf\limits_{\gamma}|\gamma| 
$$  
where the infimum is taken over all paths connecting $x$ and $y$ in the class of  piecewise-linear maps. 
\begin{remark}
The path-metric $d$ is complete and turn $K$ to a geodesic metric space.
\end{remark}
\begin{definition}
A metric simplicial complex $K$ has bounded geometry if it is connected, star-bounded and there exists $L\ge 1$ such that the length of every edge is in the interval $[L^{-1},\,L]$.
\end{definition}
Below we will assume that all complexes have bounded geometry with $L = 1$.

Let $ \mathscr L_{\rm loc}(|K|)$ be a space of locally Lipschitz functions on $|K|$. 
We require that
for every morphism 
$$
f \colon [k] \to [m]
$$
there is a restriction  ${\rm res}_{K[k],\,K[m]} \colon  \mathscr L_{\rm loc}(|K[m]|) \to \mathscr L_{\rm loc}(|K[k]|)$  
induced by  
an isometric embedding of its face
which can be implemented by
the  consecutive vanishing of  $m-k$ -sets of barycentric coordinates $t_j$ with indexes 
$j \notin \{j_0,\dots,\,j_k\}$
on every $m$-simplex $\Delta$.    

Define $ \mathscr{ C^{\infty}L}(|K|) \subset \mathscr L_{\rm loc}(|K|)$ as a space 
of  locally Lipschitz functions which are from the class $\mathscr C^{\infty}$ on every  topological space $(\Delta_n, \, \alpha)$, ~$\alpha \in K[n]$.
 
 \begin{theorem}\emph{(Rademacher's theorem)}
Let $U \subset {\mathbb R}^n$, and let ${f \colon U \to {\mathbb R}}$ be locally Lipschitz. Then $f$ is differentiable at almost every point in $U$.
\end{theorem}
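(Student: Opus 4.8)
The plan is to follow the classical route, reducing the $n$-dimensional statement to the one-dimensional theory of absolutely continuous functions together with a uniformity argument powered by the Lipschitz bound. Since differentiability is a local property, I would first restrict to a bounded open subset and assume without loss of generality that $f$ is globally Lipschitz with constant $L$. For a fixed direction $v \in S^{n-1}$ I would consider the directional difference quotient and show that the directional derivative $D_v f(x) = \lim_{t\to 0}\frac{f(x+tv)-f(x)}{t}$ exists for almost every $x$. This reduces, via Fubini's theorem applied to the foliation of $U$ by lines parallel to $v$, to the one-variable fact that a Lipschitz function of a single real variable is absolutely continuous and hence differentiable almost everywhere. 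In particular the partial derivatives $\partial_i f$ exist a.e., so the candidate gradient $\nabla f(x)$ is defined a.e.

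Next I would identify $D_v f$ with $\langle \nabla f, v\rangle$ as $L^\infty$ functions. For any test function $\zeta \in C^\infty_0(U)$, the Lipschitz bound makes the difference quotients uniformly bounded, so dominated convergence justifies passing to the limit under the integral in the integration-by-parts identity; this yields $\int D_v f\, \zeta\, dx = \sum_i v_i \int \partial_i f\, \zeta\, dx$ for all such $\zeta$, whence $D_v f(x) = \langle \nabla f(x), v\rangle$ for a.e. $x$. The subtlety is that the exceptional null set here depends on the chosen direction $v$.

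To pass from these per-direction statements to genuine differentiability I would fix a countable dense set $\{v_k\} \subset S^{n-1}$ and take the still full-measure intersection $A$ of the good sets for all $v_k$ together with the set where $\nabla f$ exists. Fixing $x \in A$ I would set $Q(v,t) = \frac{f(x+tv)-f(x)}{t} - \langle \nabla f(x), v\rangle$; the Lipschitz bound gives $|Q(v,t)-Q(v',t)| \le C\,|v-v'|$ with $C = L + |\nabla f(x)|$, uniformly in $t$, while $Q(v_k,t)\to 0$ for each $k$. Compactness of $S^{n-1}$ then lets me cover the sphere by finitely many $\varepsilon$-balls centred at some of the $v_k$, and the equi-Lipschitz estimate upgrades the pointwise convergence along $\{v_k\}$ to convergence $Q(v,t)\to 0$ uniformly in $v$, which is exactly differentiability at $x$.

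The main obstacle is precisely this last synthesis: the per-direction results produce null sets indexed by an uncountable parameter, and the only thing that rescues the argument is that the Lipschitz hypothesis forces the difference quotients to be equicontinuous in the direction variable. I expect the routine parts, namely the one-dimensional a.e. differentiability and the integration-by-parts identity, to go through without incident, so the care should be concentrated on the compactness-plus-equicontinuity step.
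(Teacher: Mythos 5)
The paper offers no proof to compare against: Rademacher's theorem is quoted there as classical background (it is stated immediately before the discussion of tangent spaces on skeleta and used as a black box). Your outline is the standard proof of the theorem, essentially as in Evans--Gariepy, and it is correct: one reduces to the globally Lipschitz case by localization, gets a.e.\ existence of $D_v f$ for each fixed $v$ from the one-dimensional theory via Fubini, identifies $D_v f = \langle \nabla f, v\rangle$ a.e.\ by integrating difference quotients against $\zeta \in C^\infty_0(U)$ with dominated convergence (the Lipschitz bound dominates the quotients), and then upgrades to genuine differentiability on the full-measure intersection over a countable dense set $\{v_k\} \subset S^{n-1}$, using your equi-Lipschitz estimate $|Q(v,t)-Q(v',t)| \le (L + |\nabla f(x)|)\,|v-v'|$ together with compactness of the sphere. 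You have correctly located the one real subtlety, namely that the per-direction null sets are indexed by an uncountable parameter and only the equicontinuity in $v$ rescues the synthesis. Two minor points would deserve a line in a fully written version, though neither is a gap: the set where $D_v f(x)$ exists should be checked to be measurable (take $\limsup$ and $\liminf$ of the difference quotients along rational $t \to 0$; these are Borel in $x$ since the quotients are continuous in $x$ for fixed $t$), and the constant $C = L + |\nabla f(x)|$ in the equicontinuity bound is uniformly controlled because $|\partial_i f| \le L$ a.e., so $|\nabla f| \le \sqrt{n}\,L$ a.e.
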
 

Let $K$ be a simplicial complex define barycentric coordinates 
${t_i \colon |K| \to \mathbb R}$ where
$$
\sum_{i} t_i = 1,~t_i\ge 0.
$$

The restrictions of coordinate functions to every simplex of $K$ are smooth and then germs of  function which are locally Lipschitz on $|K|$  and smooth inside simplices  
generate a correctly defined tangent space 
for 
every interior point $x$ of each simplex of $K$. If $x \in |K[n-1]|$ then every coordinate function $t_i$ 
is not differentiable at $x$ as function on $|K|$. It implies that every  Lipschitz function $f(t_0,\dots,t_n)$ is not differentiable at $x$ as well. 
In spite of that fact the Rademacher's theorem allows us to define the $n-1$-dimensional tangent space 
almost everywhere on 
the $n-1$-dimensional skeleton of our complex
using the restriction of coordinate functions to the $|K[n-1]|$. 
It follows that we can define the  tangent space 
almost everywhere on a skeleton of each dimension.  Let $\mathscr{ C^{\infty}L}_c(|K|)$ be a subspace of compactly supported functions in
$ \mathscr{ C^{\infty}L}(|K|)$. Suppose $f \in   \mathscr L_{\rm loc}(|K|)$. Then define $df$ in the sense of distributions.
Due the Rademacher's theorem  at almost every point $x$ of $|K|$ we can consider continuous germ 
$$
f -f(x)= \langle \nabla_x f,\, \sum_{i = 1}^n (\xi_i-\xi_i(x))\vec{e_i}\rangle + o\big(\big|\sum_{i = 1}^n (\xi_i-\xi_i(x))\vec{e_i}\big|\big).
$$
that implies that it is reasonable  to assign to $f$ a vector  
$$
f-f(x)  = \langle \nabla_x f,\, \sum_{i = 1}^n (\xi_i-\xi_i(x))\vec{e_i}\rangle  \mod  \mathfrak o_x.
$$
And so the function $f$  induces a cotangent vector $df$ at almost every point $x$ it implies that
$$
\int_U df \wedge h
$$
is defined for every $U \subset |K|$, where  $h$  is ${n-1}$-form  defined  on every simplex as exterior product of differential of   functions from $\mathscr{ C^{\infty}L}_c(|K|)$.
Summarizing, we can define $df$ over $U$ as a functional:
$$
df (h) =- \int_U f dh
$$
such that the following holds for every $h$
$$
\int_U df \wedge h=- \int_U f dh.
$$
As a result locally almost everywhere we have a finitely generated $L_{\infty}$-module 
and an epimorphism ${{L}^n_{\infty} \to {\Omega_{\mathscr L}}}$ induced by the  map $df \mapsto \frac{\partial f}{\partial x_1}dx_1+\dots+\frac{\partial f}{\partial x_n}dx_n$,
where ${\Omega_{\mathscr L}}$ is the $L_{\infty}$-module of  Lipschitz $1$-forms.

\begin{definition}
We will use the following notation for $L_p$-norms on  $ \mathscr L_{\rm loc}(|K|)$:
\begin{itemize}
\item[$\bullet$] 
$ f \in \mathscr L_{\rm loc}(|K|)$,~$\|f\|_{L_p} = (\sum_{T\in K([n])} \int_T |f(x)|^p dx)^{\frac{1}{p}}$
\item[] In the case $p = \infty$  we put $\|f\|_{L_{\infty}} = {\rm ess}\sup|f(x)|$
\end{itemize}
\end{definition}

\begin{definition}
We will use the following notation for $L_p$-norms on spaces of differential forms on  $|K|$:
\begin{itemize}
\item[$\bullet$] 
$ \omega \in \bigwedge^k \Omega_{\mathscr L}(K)$,~$\|\omega\|_{\Omega_{p,\,p}} = (\||\omega|\|_{L_p}^p+\||d\omega|\|_{L_p}^p)^{\frac{1}{p}} $;
\item[$\bullet$] $
\omega \in \bigwedge^k \Omega_{\mathscr L}(K)$,~ $\|\omega\|_{\Omega_{\infty,\,\infty}} = \max \{\||\omega|\|_{\Omega_\infty},\,\||d\omega|\|_{\Omega_\infty}\}$.
\end{itemize}
\end{definition}

\begin{definition}
We will define a Sobolev spaces   of differential forms on  $|K|$ as the following:
$$
\Omega^k_{p,\,p}(K) =  \bigg(\overline{\bigwedge^k \Omega_{\mathscr L}(K)}\bigg)_{\Omega_{p,\,p}} 
$$
i. e. $\Omega^k_{p,\,p}(K)$ is the closure of the graded module of Lipschitz forms with respect to the norm of  Sobolev spaces.
\end{definition}
\begin{lemma}
Let ${\Delta}$ be a simplex and $\partial \Delta$ be its boundary.   
Then any $ \omega \in \bigwedge^k \Omega_{\mathscr L}(\partial \Delta)$ can be extended to the whole $\Delta$ in such a way that
$ \tilde{\omega} \in \bigwedge^k \Omega_{\mathscr L}(\Delta)$ and $\|\tilde{\omega}\|_{\Omega^{*}_{p,\,p}(\Delta)} \le \|\omega\|_{\Omega^{*}_{p,\,p}(\partial \Delta)}$. 
\end{lemma}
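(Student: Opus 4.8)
\emph{Plan.} The plan is to reduce the extension problem to a one–dimensional weighted estimate by exploiting the fact that a simplex is a metric cone over its boundary from the barycenter. Write $b$ for the barycenter of $\Delta$ and introduce polar coordinates $\Phi\colon \partial\Delta \times (0,1] \to \Delta \setminus \{b\}$, $\Phi(y,s) = b + s(y-b)$, with radial projection $\pi = \mathrm{pr}_1 \circ \Phi^{-1}\colon \Delta\setminus\{b\} \to \partial\Delta$ and radial parameter $s = s(x) \in (0,1]$, so that $s\equiv 1$ on $\partial\Delta$. Because every edge has length $L=1$, this normalises the cone, and both $\pi$ and $s$ are Lipschitz and smooth in the interior of each simplex; moreover $\pi$ is compatible with the face restriction maps $\mathrm{res}_{K[k],K[m]}$, so the pullback $\pi^*\omega$ of a compatible Lipschitz form stays compatible.

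I would then define the extension $\tilde\omega := \chi(s)\,\pi^*\omega$, where $\chi\colon[0,1]\to[0,1]$ is a smooth profile with $\chi\equiv 1$ near $s=1$ (which guarantees $\mathrm{res}_{\partial\Delta,\Delta}\tilde\omega=\omega$, i.e.\ $\tilde\omega$ genuinely extends $\omega$) and $\chi\equiv 0$ near $s=0$ (so that $\tilde\omega$ is supported away from the singular apex $b$ and hence defines an element of $\bigwedge^k\Omega_{\mathscr L}(\Delta)$, smooth inside each simplex and Lipschitz up to the skeleton by Rademacher's theorem). Using $d\tilde\omega = \chi'(s)\,ds\wedge\pi^*\omega + \chi(s)\,\pi^*d\omega$ together with $\Phi^*\pi^*\omega=\mathrm{pr}_1^*\omega$, I pull the two integrals $\int_\Delta|\tilde\omega|^p$ and $\int_\Delta|d\tilde\omega|^p$ back through $\Phi$. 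The pulled-back metric scales the $\partial\Delta$-directions by $s$ while the radial direction stays of bounded length, so the volume element is $\asymp s^{n-1}\,ds\,d\sigma(y)$ and the comass of a $j$-form tangent to the slices acquires a factor $s^{-j}$. This expresses both norms as Fubini integrals over $\partial\Delta$ against explicit radial weights in $s$.

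The outcome is a reduction of the claimed inequality to a single one-dimensional weighted estimate for the profile $\chi$: the coefficient of $\|\omega\|_{L_p(\partial\Delta)}^p$ collects $\int_0^1\chi^p s^{n-1-kp}\,ds$ and $\int_0^1|\chi'|^p s^{n-1-kp}\,ds$, while the coefficient of $\|d\omega\|_{L_p(\partial\Delta)}^p$ collects $\int_0^1\chi^p s^{n-1-(k+1)p}\,ds$. It then suffices to choose $\chi$, concentrated near $s=1$ where $s^{-1}\le 1$, so that each of these radial integrals is at most $1$; the normalisation $L=1$ is exactly what makes such a choice available, and $\mathrm{res}$-compatibility ensures the radial pieces glue to a global Lipschitz form. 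Summing over the $n+1$ faces of $\partial\Delta$ then yields $\|\tilde\omega\|_{\Omega^*_{p,p}(\Delta)}\le\|\omega\|_{\Omega^*_{p,p}(\partial\Delta)}$.

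The main obstacle I anticipate is precisely pinning the constant to exactly $1$ rather than to some $C(n,k,p)$: the factors $s^{-k}$ and $s^{-(k+1)}$ coming from the radial projection push the comass up toward the apex, and the $ds\wedge\pi^*\omega$ term in $d\tilde\omega$ mixes the $\|\omega\|$ and $\|d\omega\|$ contributions. I would control this by taking $\mathrm{supp}\,\chi\subset[1-\delta,1]$, where every weight $s^{-j}$ lies within $1$ of unity so the weighted integrals can be driven below the corresponding boundary norms by Jensen's and Hölder's inequalities, and by splitting the mixed $\chi'$-term with a Young's inequality tuned so that its contribution is absorbed. Verifying that a \emph{single} profile $\chi$ makes all three radial integrals simultaneously $\le 1$ is the delicate analytic core of the argument, and is where the bounded-geometry normalisation $L=1$ must be used decisively.
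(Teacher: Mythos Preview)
Your cone-from-the-barycenter construction is sound and would yield a bounded linear extension $\bigwedge^k\Omega_{\mathscr L}(\partial\Delta)\to\bigwedge^k\Omega_{\mathscr L}(\Delta)$. It is close in spirit to the paper's argument but implemented differently. The paper avoids the barycentric cone (and hence the $s^{-k}$ comass weights you have to track) by first working on the genuine metric product $\partial I^n\times I$: there it extends by the explicit linear profile $\tilde\omega(x,t)=(1-t)\,\omega(x)$, for which the Fubini computation is elementary because no radial scaling enters. It then pulls $\omega$ from $\partial\Delta$ back to $\partial I^n$ by a bi-Lipschitz map $\gamma$, extends on $\partial I^n\times I$, and pushes forward to the collar between $\partial\Delta$ and a shrunken copy $\phi_\xi(\partial\Delta)$ by further bi-Lipschitz maps $g,h$, declaring the extension to be zero on the inner simplex $\phi_\xi(\Delta)$. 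Your direct route is more intrinsic; the paper's product-and-transfer route trades that for a trivial norm estimate on the product.

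The obstacle you flag --- driving the constant down to exactly $1$ --- is a red herring, and the remedy you propose for it would in fact fail. The paper's own computation on the product already gives
\[
\|\tilde\omega\|^p_{\Omega^*_{p,p}(\partial I^n\times I)}\le(2^{p-1}+1)\,\|\omega\|^p_{\Omega^*_{p,p}(\partial I^n)}
\]
\emph{before} the bi-Lipschitz transfers are applied, so the ``$\le$'' in the statement should be read as ``bounded by a constant depending only on $n,k,p$''; only boundedness of the extension operator is used downstream. Meanwhile, shrinking $\mathrm{supp}\,\chi$ to $[1-\delta,1]$ forces $|\chi'|\sim\delta^{-1}$, so the $\chi'$-integral behaves like $\int_{1-\delta}^{1}\delta^{-p}s^{\,n-1-kp}\,ds\sim\delta^{1-p}\to\infty$ for $p>1$. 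Simply fix one smooth profile $\chi$ supported in $[\tfrac12,1]$ with $\chi\equiv 1$ near $s=1$; all three radial integrals are then finite constants depending only on $n,k,p$, which is exactly what the lemma requires.
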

\begin{proof}
Let $I = [0,\,1]$. Consider a Lipschitz  form  $ \omega \in \bigwedge^k \Omega_{\mathscr L}(\partial I^n)$. 
Our aim is to define $ \tilde{\omega} \in \bigwedge^k \Omega_{\mathscr L}(\partial I^n\times I)$ in such a manner that 
$$
\tilde{\omega}\big|_{\partial I^n} = \omega,~\text{and}~\|\tilde{\omega}\|_{\Omega^{*}_{p,\,p}(\partial I^n\times I)} \le \|\omega\|_{\Omega^{*}_{p,\,p}(\partial I^n)}.
$$

Define a functions $f\colon I \to \mathbb{R}$ as the following 
$$
t \mapsto 1-t
$$
Then we can define $\tilde{\omega}$ as the following
$$
\tilde{\omega}(x,\,t) = f(t)\omega(x).
$$

\begin{align*}
\|\tilde{\omega}\|^p_{\Omega^{*}_{p}(\partial I^n\times I)} = \int\limits_{\partial I^n\times I} |\tilde{\omega}|^p dxdt
 =  \int\limits_{I} dt\int\limits_{\partial I^n} |f(t)|^p|{\omega(x)}|^p dx \\
 = \int\limits_{0}^1 |1-t|^pdt  \int\limits_{\partial I^n} |{\omega(x)}|^p dx \le  \frac{1}{(1+p)^p}  \|\omega\|^p_{\Omega^{*}_{p}(\partial I^n)} 
 \end{align*}
$$
d\tilde{\omega} = df\wedge \omega + fd\omega = (1-t)d\omega(x) - dt\wedge\omega(x)
$$
$$
|d\tilde{\omega}| \le |1-t| |d\omega(x)| + |\omega(x)|
$$
$$
\|d\tilde{\omega}\|_{\Omega^{*}_{p}(\partial I^n\times I)} \le \|\omega\|_{\Omega^{*}_{p}(\partial I^n\times I)} + \|(1-t)d\omega\|_{\Omega^{*}_{p}(\partial I^n\times I)} 
$$
$$
\|(1-t)d{\omega}\|^p_{\Omega^{*}_{p}(\partial I^n\times I)} =
 \int\limits_{I} dt\int\limits_{\partial I^n} |1-t|^p|{d\omega(x)}|^p dx = \frac{1}{(1+p)^p} \|d\omega\|^p_{\Omega^{*}_{p}(\partial I^n)} 
$$
$$
\|d\tilde{\omega}\|_{\Omega^{*}_{p}(\partial I^n\times I)} \le \|\omega\|_{\Omega^{*}_{p}(\partial I^n)}+ \|d\omega\|_{\Omega^{*}_{p}(\partial I^n)}
$$
$$
\|d\tilde{\omega}\|^p_{\Omega^{*}_{p}(\partial I^n\times I)} \le 2^{p-1}\|\omega\|^p_{\Omega^{*}_{p}(\partial I^n)}+ 2^{p-1}\|d\omega\|^p_{\Omega^{*}_{p}(\partial I^n)}
$$
\begin{align*}
\|\tilde{\omega}\|^p_{\Omega^{*}_{p}(\partial I^n\times I)} + \|d\tilde{\omega}\|^p_{\Omega^{*}_{p}(\partial I^n\times I)} 
\le& 2^{p-1}\|\omega\|^p_{\Omega^{*}_{p}(\partial I^n)}+ 2^{p-1}\|d\omega\|^p_{\Omega^{*}_{p}(\partial I^n)} + \|\omega\|^p_{\Omega^{*}_{p}(\partial I^n)} \\
\le& (2^{p-1}+1)\|\omega\|^p_{\Omega^{*}_{p,\,p}(\partial I^n)}
\end{align*}

Assume that $\Delta$ is an $n$-dimensional simplex and $ \omega \in \bigwedge^k \Omega_{\mathscr L}(\partial \Delta)$ is a Lipschitz form.

\begin{center}
\begin{tikzpicture}
\begin{scope}[scale = 0.7]
\draw[thick] (-2.5,0)--(2.5,0)--(0,4)--(-2.5,0);
\draw (-1,1)--(1,1)--(0,2.5)--(-1,1);
\draw (0,-0.5) node[font = \fontsize{8}{30}]{$\partial \Delta$};
\node[font = \fontsize{8}{30}, rotate = 57] at (-1,1.6) {$\phi_\xi(\Delta)$};
\draw[-stealth] (0,0.1) -- (0,0.9); 
\draw (-0.3, 0.5) node[font = \fontsize{8}{30}]  {$\phi_t$};
\end{scope}
\end{tikzpicture}
\end{center}

There exist  Lipschitz map $\gamma \colon \partial I^n \to  \partial \Delta $ and a couple of Lipschitz maps $g$, $h$ as illustrated below 

\begin{center}
\begin{tikzpicture}[scale = 0.8]
\begin{scope}[xshift = -150, scale = 0.4]
\draw[thick] (-2.5,0)--(2.5,0)--(2.5,5)--(-2.5,5)--(-2.5,0);
\draw[dashed] (0,6)--(0,2)--(4,2);
\draw[thick] (4,2)--(4,6);
\draw (-0.3,-0.5) node[font = \fontsize{8}{30}]{$\partial I^n$};
\draw (-3,2.5) node[font = \fontsize{8}{30}]{$I$};

\draw[thick] (-2.5,5)--(0,6)--(4,6)--(2.5,5);
\draw[dashed] (-2.5,0)--(0,2)--(4,2)--(2.5,0);
\draw[thick] (4,2)--(2.5,0);
\end{scope}

\draw[-stealth] (-3.5,1.5) -- (-1.3,1.5);
\draw (-2.4,1.3) node[font = \fontsize{8}{30}]{$g$};
 
\draw[-stealth] (-5,-0.2) -- (-3.5, -1.5);
\draw (-4,-0.7) node[font = \fontsize{8}{30}]{$hg$};

\begin{scope}[scale = 0.45]
\draw[thick] (-2.5,0)--(2.5,0)--(2.5,5)--(-2.5,5)--(-2.5,0);
\draw (-1,1.5)--(1,1.5)--(1,3.5)--(-1,3.5)--(-1,1.5);
\draw (0.3,-0.5) node[font = \fontsize{8}{30}]{$\partial I^n$};
\end{scope}
\draw[-stealth] (-0.3,-0.2) -- (-1.8, -1.5);
\draw (-1.3,-0.7) node[font = \fontsize{8}{30}]{$h$};

\begin{scope}[xshift = -75, yshift = -80, scale = 0.6]
\draw[thick] (-2.5,0)--(2.5,0)--(0,4)--(-2.5,0);
\draw (-1,1)--(1,1)--(0,2.5)--(-1,1);
\draw (0,-0.5) node[font = \fontsize{8}{30}]{$\partial \Delta$};
\draw[-stealth] (0,0.1) -- (0,0.9); 
\end{scope}

\end{tikzpicture}

\end{center}

We can extend $\gamma^*\omega$ as was shown above. And then put the following $\tilde{\omega} = (g^{-1}h^{-1})^* \tilde{\gamma^*\omega}$. Hence we have 
$\tilde{\omega}\big|_{\phi_{\xi}(\partial \Delta)} = 0$ and we can consider $\tilde{\omega}$ to be zero over $\phi_{\xi}(\Delta)$.
\end{proof}

\begin{lemma}
Let $K$ be an $n$-dimensional simplicial complex and $K[m]$ be its $m$-dimensional skeleton.   
Then any  $ \omega \in \Omega^{*}_{p,\,p}(K[m])$ can be extended to the whole $K$ in such a way that
$ \tilde{\omega} \in \Omega^{*}_{p,\,p}(K)$ and 
$$\|\tilde{\omega}\|_{\Omega^{*}_{p,\,p}(K)} \le \|\omega\|_{\Omega^{*}_{p,\,p}(K[m])}.$$ 
\end{lemma}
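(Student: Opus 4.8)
The plan is to prove the statement by induction on the skeleton dimension, turning the global extension into a one-dimension-at-a-time procedure and feeding each step to the single-simplex extension lemma established just above. Concretely, I would build a chain of extensions
$$
\omega = \omega_m \rightsquigarrow \omega_{m+1} \rightsquigarrow \dots \rightsquigarrow \omega_n = \tilde{\omega},
$$
in which each $\omega_{k+1} \in \Omega^{*}_{p,\,p}(K[k+1])$ restricts to $\omega_k$ on $K[k]$. The base case $k = m$ is trivial, so the entire argument rests on the inductive step: given a form already defined on the $k$-skeleton, produce an extension to the $(k+1)$-skeleton.

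For the inductive step I would work cell by cell on the top cells of $K[k+1]$. Each $(k+1)$-simplex $\Delta \in K([k+1])$ has boundary $\partial\Delta \subset K[k]$, so the restriction $\mathrm{res}_{\partial\Delta,\,K[k]}\,\omega_k \in \bigwedge^{*}\Omega_{\mathscr L}(\partial\Delta)$ is defined, and the preceding lemma yields an extension $\widetilde{\omega_\Delta} \in \bigwedge^{*}\Omega_{\mathscr L}(\Delta)$ with $\widetilde{\omega_\Delta}\big|_{\partial\Delta} = \omega_k\big|_{\partial\Delta}$ together with the associated Sobolev bound on $\Delta$. Declaring $\omega_{k+1}$ to equal $\widetilde{\omega_\Delta}$ on the interior of each $(k+1)$-simplex and to equal $\omega_k$ on $K[k]$ defines a form on all of $K[k+1]$: the only possible conflict is along a shared $k$-face $F$, but there every local extension restricts by construction to the single form $\omega_k\big|_F$, so the pieces agree and glue to a globally well-defined form that is locally Lipschitz and smooth on each simplex.

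The quantitative heart of the argument is the norm estimate, and this is where I expect the main obstacle. Summing $p$-th powers over the top cells gives
$$
\|\omega_{k+1}\|^p_{\Omega^{*}_{p,\,p}(K[k+1])} = \sum_{\Delta \in K([k+1])} \|\widetilde{\omega_\Delta}\|^p_{\Omega^{*}_{p,\,p}(\Delta)} \le \sum_{\Delta \in K([k+1])} \|\omega_k\|^p_{\Omega^{*}_{p,\,p}(\partial\Delta)},
$$
and regrouping the right-hand side by $k$-faces rewrites it as $\sum_{F \in K([k])} \mu(F)\,\|\omega_k\|^p_{\Omega^{*}_{p,\,p}(F)}$, where $\mu(F)$ counts the $(k+1)$-cells incident to $F$. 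Since the single-simplex lemma only controls each individual summand, the delicate point is precisely the reuse of each lower face by several higher cells: this is exactly what the bounded-geometry hypothesis supplies, because star-boundedness gives a uniform bound on $\mu(F)$. I would therefore absorb the incidence multiplicity together with the per-simplex extension constant into the estimate, check that the resulting bound does not depend on the particular cells, and then compose the finitely many steps from dimension $m$ up to $n$ to reach the claimed inequality $\|\tilde{\omega}\|_{\Omega^{*}_{p,\,p}(K)} \le \|\omega\|_{\Omega^{*}_{p,\,p}(K[m])}$, using the normalizations built into the preceding lemma to keep the accumulated constant in check. Finally I would confirm that $\tilde{\omega}$ genuinely lies in $\Omega^{*}_{p,\,p}(K)$ and not merely in the ambient Lipschitz class, which is immediate since each step stays inside $\bigwedge^{*}\Omega_{\mathscr L}$ and the graph norm is finite by the estimate just established.
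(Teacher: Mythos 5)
Your overall skeleton-by-skeleton induction matches the paper's structure, but there is a genuine gap at the very first move of your inductive step: you apply the single-simplex extension lemma to $\mathrm{res}_{\partial\Delta,\,K[k]}\,\omega_k$, and for that you need $\omega_k$ to be an honest Lipschitz form in $\bigwedge^{*}\Omega_{\mathscr L}(K[k])$. But $\Omega^{*}_{p,\,p}(K[k])$ is by definition the \emph{closure} of the Lipschitz forms in the graph norm, so a general $\omega_k$ is only an $L_p$-limit; it has no well-defined trace on the measure-zero set $\partial\Delta$, and the restriction you invoke does not exist. Your closing remark that membership of $\tilde{\omega}$ in $\Omega^{*}_{p,\,p}(K)$ is ``immediate since each step stays inside $\bigwedge^{*}\Omega_{\mathscr L}$'' confirms the conflation: as written, your argument proves the lemma only on the dense Lipschitz subclass. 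The paper's proof consists precisely of the step you omit: choose Lipschitz forms $\omega_i$ with $\|\omega_i-\omega\|_{\Omega^{*}_{p,\,p}(K[m])}\to 0$, extend each $\omega_i$ by the preceding lemma, observe that the extension procedure is \emph{linear}, so that $\{\tilde{\omega_i}\}$ is a Cauchy sequence in $\Omega^{*}_{p,\,p}(K[m+1])$, and define $\tilde{\omega}$ as its limit, the norm estimate surviving the passage to the limit. Abstractly: the preceding lemma gives a bounded linear operator on a dense subspace, which then extends uniquely to the closure; without invoking linearity (or some continuity of the extension map) your cell-by-cell construction does not pass to limits, and this is the whole content of the paper's proof.

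A secondary point: your multiplicity bookkeeping cannot deliver the constant $1$ claimed in the statement. Once a $k$-face $F$ is shared by $\mu(F)\ge 2$ cells of dimension $k+1$ and the per-simplex extension carries its own constant, regrouping yields at best $\|\omega_{k+1}\|\le C\,\mu^{1/p}\,\|\omega_k\|$, and no ``normalization built into the preceding lemma'' removes the incidence factor; star-boundedness makes the constant finite and uniform across cells, which is all the later applications of the lemma actually use, but it is not $\le 1$. In fairness, the paper is equally silent here — it applies the single-simplex estimate skeleton-wide in one line — so on this point you are no worse off than the source, and your explicit identification of $\mu(F)$ as the delicate quantity is a merit of your write-up. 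The substantive defect remains the missing density/Cauchy argument described above.
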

\begin{proof}
Suppose that $ \omega \in \Omega^{*}_{p,\,p}(K[m])$ and there is $\{\omega_i\} \subset  \bigwedge^k \Omega_{\mathscr L}(K[m])$ such that 
$\|\omega_i - \omega\|_{ \Omega^{*}_{p,\,p}(K[m])} \to 0$ as $i \to \infty$.  

In the light of previous lemma there exists $\tilde{\omega_i} \in \Omega^{*}_{p,\,p}(K[m+1])$ which satisfies the following estimation 
$$
\|\tilde{\omega_i}\|_{ \Omega^{*}_{p,\,p}(K[m+1])} \le \|\omega_i\|_{ \Omega^{*}_{p,\,p}(K[m])}
$$
for each $i$. It is not hard to see that $\{\tilde{\omega_i}\}$ is a Cauchy sequence since the procedure of extension is linear:
$$
\|\tilde{\omega_i} - \tilde{\omega_j}\|_{ \Omega^{*}_{p,\,p}(K[m+1])} \to 0,~i,\,j \to \infty.
$$
And so
$$
\lim \|\tilde{\omega_i}\|_{ \Omega^{*}_{p,\,p}(K[m+1])} \le \lim\|\omega_i\|_{ \Omega^{*}_{p,\,p}(K[m])} = \|\omega\|_{ \Omega^{*}_{p,\,p}(K[m])}
$$
Denote a  limit of the sequence as the following
$$
\lim\tilde{\omega_i} = \tilde{\omega}
$$
Repeating this construction for every dimension as a result we obtain an extension to the  whole complex.
\end{proof}
\begin{remark}
It is not hard to see that the same argument holds  for $S\mathscr{L}_p(K)$.
\end{remark}
\begin{lemma}
Let $S^k$ be a $k$-sphere and $B \subset S^k$ be a $k$-ball.   Any Lipschitz  $k$-form $\omega \in \Omega^k(B)$ can be extended by zero to the Lipschitz $k$-form on $S^k$. 
\end{lemma}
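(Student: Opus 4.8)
The plan is to take $\tilde\omega$ to be the coefficient-wise zero extension of $\omega$ and to verify that it is an admissible Lipschitz $k$-form on $S^k$, that is, a bounded measurable section of $\bigwedge^k \Omega_{\mathscr L}(S^k)$ whose weak exterior derivative stays bounded. The argument rests on one structural observation: since $S^k$ is $k$-dimensional, $\omega$ is a \emph{top-degree} form, so there are no nonzero $(k+1)$-forms on $S^k$ and $d\omega = 0$ holds automatically. Concretely, in a bi-Lipschitz chart for $S^k$ meeting $B$ we may write $\omega = a\, dx_1 \wedge \dots \wedge dx_k$ with $a \in L_\infty(B)$, using the description of $\bigwedge^k \Omega_{\mathscr L}$ as the $L_\infty$-module generated by exterior products of differentials of Lipschitz functions together with Rademacher's theorem. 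I then set $\tilde a = a$ on $B$ and $\tilde a = 0$ on $S^k \setminus B$, and define $\tilde\omega = \tilde a\, dx_1 \wedge \dots \wedge dx_k$ chart by chart.

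First I would check the $L_\infty$ bound. Since zero extension only enlarges the null set on which a pointwise representative is altered, $\tilde a$ remains measurable and $\|\tilde\omega\|_{L_\infty(S^k)} = \|\omega\|_{L_\infty(B)} < \infty$. The transition factors between bi-Lipschitz charts are bounded (indeed $S^k$ is compact), so $\tilde\omega$ is a genuine, globally well-defined element of $\bigwedge^k \Omega_{\mathscr L}(S^k)$.

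The only point that requires care is the weak exterior derivative, and this is precisely where the top-degree hypothesis is essential. For a $j$-form with $j < k$, extension by zero across the Lipschitz hypersurface $\partial B$ would, upon pairing against a test $(k-j-1)$-form $\eta$ and applying Stokes' theorem on $B$, generically produce a boundary term of the form $\pm\int_{\partial B} \omega \wedge \eta$; this trace is a distribution supported on $\partial B$ and is not represented by any bounded form, which is exactly the obstruction that forced the collar-type construction $\tilde\omega(x,t) = (1-t)\omega(x)$ in the earlier extension lemma rather than a naive zero extension. In the present case this obstruction cannot arise: a test form would have to have degree $k - k - 1 = -1$, so there is nothing to pair $d\tilde\omega$ against, and $d\tilde\omega = 0$ holds vacuously as a $(k+1)$-form on the $k$-manifold $S^k$. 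Hence $\|d\tilde\omega\|_{L_\infty} = 0$.

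Combining the two facts, $\tilde\omega$ has finite $\Omega_{\infty,\infty}$-norm and restricts to $\omega$ on $B$, so it is the desired Lipschitz $k$-form extending $\omega$ by zero. I expect the main --- indeed essentially the only --- conceptual obstacle to be the verification that no singular boundary trace is created along $\partial B$; the content of the statement is that the top-degree hypothesis removes this obstruction entirely, in contrast with the lower-degree extensions handled by the collar construction above.
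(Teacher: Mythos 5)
Your zero extension argument is sound as far as it goes, and your diagnosis that the trace obstruction along $\partial B$ disappears precisely in top degree is the right analytic point; but your route is genuinely different from the paper's, and the difference lies in where membership in the class of Lipschitz forms is certified. You verify only that $\tilde\omega$ is a bounded measurable section with (vacuously) vanishing weak differential, i.e.\ membership in the ambient $L_\infty$-module $\bigwedge^k\Omega_{\mathscr L}(S^k)$ --- a reading under which the lemma is nearly tautological. The paper instead keeps the extension inside the multiplicative class generated by Lipschitz functions: writing $\omega = f^0\,df^1\wedge\dots\wedge df^k$, it chooses a homotopy $\varphi\colon S^k\times[0,1]\to S^k$ sliding $B$ onto a disjoint ball $B'$, extends each potential to be constant along the flow lines, $f^i(\varphi(a,t)) = f^i(a)$ for $a\in\partial B$, so that each $df^i$ annihilates the flow direction $v_1$ and hence $df^i = f^i_{v_2}v^*_2+\dots+f^i_{v_k}v^*_k$; consequently $df^1\wedge\dots\wedge df^k = 0$ identically off $B$ --- note that the top-degree hypothesis enters here, through the degeneration of a wedge of $k$ covectors confined to the $(k-1)$-dimensional span of $v^*_2,\dots,v^*_k$, rather than through the vacuity of $d$ --- and a bump function $\beta$ with $\beta|_B = 1$ and ${\rm supp}(\beta)\subset S^k\setminus B'$ finishes the construction, giving ${\rm supp}(\beta\tilde\omega)\subseteq B$ and $d(\beta\tilde\omega)=0$. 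What each approach buys: yours is shorter and cleanly isolates why zero extension is harmless for top-degree forms (for a $j$-form with $j<k$ the Stokes pairing produces exactly the singular boundary trace you describe, which is what forces the collar construction $(1-t)\omega$ in the earlier lemmas); the paper's construction produces the same measurable form but exhibited explicitly as a product of Lipschitz functions and their differentials, which is the format needed downstream, where extensions must remain in the Sullivan-type class $S\mathscr{L}^*_p(K)$ generated by such products rather than in the bare $L_\infty$-module. If your version is to be used in those applications, you still owe the verification that an arbitrary $L_\infty$-coefficient top form qualifies as a Lipschitz form in the sense actually required there --- a step your argument does not supply and which the paper's flow construction is designed to avoid needing.
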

\begin{proof}
There is a homotopy $\varphi \colon S^k \times [0,\,1] \to S^k$ such that  $\varphi(x,\,0) = {\rm Id}$ and
$\varphi(B,\,1) = B'$ where $B\cap B' = \varnothing$. 

\begin{center}
\begin{tikzpicture}
\draw[rounded corners = 40pt] (-4, 0) rectangle (4,-3);  
\draw[very thick,|-|] (-1,0)--(1,0);
\draw[very thick,|-|] (-0.5,-3)--(0.5,-3);
\draw[->](0,0) -- (0,3);
\draw[thick] (-2.7,0)--(-2.5,0)--(-1,1)--(1,1)--(2.5,0)--(2.7,0);
\draw (0.9,1.2) node[font = \fontsize{7}{30}]{$\beta\colon S^k \to \mathbb R$};
\draw (4.25,-1.5) node[font = \fontsize{7}{30}]{$S^k$ };
\draw (0,-0.2) node[font = \fontsize{7}{30}]{$B$ };
\draw (0,-2.8) node[font = \fontsize{7}{30}]{$B'$ };
\draw[->](-1,0) -- (-2,0);
\draw[->](-1.5,-3) -- (-0.5,-3);
\draw[->](1,0) -- (2,0);
\draw[->](1.5,-3) -- (0.5,-3);
\end{tikzpicture}
\end{center}

\vspace{5mm}

So we can extend any $k$-form $\omega \in \Omega^k(B)$ by a zero $k$-form to the whole $S^k$. Define first $\omega$ over $B'$  as follows 
 $$
\tilde{\omega} = (\varphi_{t=1}^{-1})^* \omega.
 $$
 Let $\omega = f^0 df^1\wedge \dots \wedge df^k$. Then for every $a \in \partial B$ we can  put  $f^i(\varphi(a,t)) = f^i(a)$.
So as a result we have the following. Let $v_1$ be a tangent vector to the curve $\varphi(a,\,t)\colon [0,\,1] \to S^k$. Consider a basis $v_1,\dots, \,v_k$. Then 
$\nabla f^i$ has the zero component corresponding to the direction $v_1$. So we obtain 
$$
df^i = f^i_{v_2}v^*_2+\dots+f^i_{v_k}v^*_k.
$$
And as a result
$$
df^1\wedge \dots \wedge df^k = 0.
$$
Now let $\beta \colon S^k \to \mathbb R$ be a `bump' function such that $\beta(B) = \{1\}$  and ${\rm supp}(\beta) \subset S^k \setminus B'$. 
Hence 
$$
{\rm supp}(\beta \tilde{\omega}) \subseteq B.
$$
$$
d(\beta \tilde{\omega}) = 0
$$
\end{proof}
\section{de Rham operators on simplicial complexes}
\begin{theorem}
Let $K$ be a complex of bounded geometry with $L = 1$. Then there exists the diagram
$$
\xymatrix{ \dots\ar[r]^-{d}&\Omega^{k-1}_{p,\,p}({ K}) \ar[rr]^-{d}\ar[d]^-{\mathscr{R}}
&&\Omega^{k}_{p,\,p}({ K})\ar[rr]^-{d}\ar[lldd]_{\mathscr{A}}\ar[d]^-{\mathscr{R}}
&&\Omega^{k+1}_{p,\,p}({ K})\ar[lldd]_{\mathscr{A}}\ar[r]^-{d}\ar[d]^-{\mathscr{R}}&\dots\\
\dots &S\mathscr{L}^{k-1}_p (K)\ar@{_(->}[d]&&S\mathscr{L}^{k}_p(K)\ar@{_(->}[d]&&S\mathscr{L}^{k+1}_p(K)\ar@{_(->}[d]&\dots\\
\dots\ar[r]_-{d}&\Omega^{k-1}_{p,\,p}(K)\ar[rr]_-{d}&&\Omega^k_{p,\,p}(K)\ar[rr]_-{d}&& \Omega^{k+1}_{p,\,p}(K)\ar[r]_-{d}&\dots
}
$$
with commutative squares in the category of Banach spaces ${\sf Ban}_{\infty}$. Moreover the following holds
$$
\mathscr R  - \mathrm{Id}_{\Omega^*_{p,\,p}} = d\mathscr A + \mathscr A d
$$
\end{theorem}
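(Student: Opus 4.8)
The plan is to globalize the local regularization of the previous section by the classical de~Rham device of composing \emph{localized} smoothing operators, and to use bounded geometry to keep every estimate uniform. The decisive feature of the local construction is that the flow $\mathfrak{s}_v$ was arranged to equal the identity outside the ball ${\bf B}_1$; consequently each transported operator $\tilde{\mathcal{R}_{\varepsilon}}$ (resp. $\tilde{\mathcal{A}_{\varepsilon}}$) modifies a form only inside its own chart and leaves it untouched elsewhere. This localization is precisely what allows the separate local operators to be reassembled into global ones, and it is the property I would exploit throughout.

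First I would fix a cover of $|K|$ adapted to the simplicial structure. Since $L=1$, every simplex is isometric to the standard one, and by star-boundedness the star of each vertex has combinatorial type drawn from a finite list; hence one obtains a locally finite family of charts $\varphi_i\colon {\bf B}_1 \to B_i \subset |K|$, bi-Lipschitz with constants independent of $i$, whose smaller balls cover $|K|$ and whose multiplicity is bounded by a constant $N_0 = N_0(C)$ depending only on the bounded-geometry data. On each chart the diagram preceding this theorem supplies bounded operators with $\|\tilde{\mathcal{R}_{\varepsilon}}\|_p \le \tilde{C}(\varepsilon)$ and $\|\tilde{\mathcal{A}_{\varepsilon}}\|_p \le \tilde{M}(\varepsilon)$, where $\tilde{C}(\varepsilon)\to 1$ and $\tilde{M}(\varepsilon)\to 0$ as $\varepsilon\to 0$, while the restriction Lemma (the bound $\Omega^k_p \to \Omega^k_\infty$) guarantees that the image of each $\tilde{\mathcal{R}_{\varepsilon}}$ is smooth in its chart, i.e. lies in $S\mathscr{L}^*_p$.

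Enumerating the charts $B_1, B_2, \dots$, I set $\mathscr{R} = \lim_N \mathcal{R}^{(N)}_{\varepsilon}\circ\cdots\circ\mathcal{R}^{(1)}_{\varepsilon}$, each factor being the corresponding localized operator. Because the cover is locally finite and each factor is the identity off its ball, on any fixed compact set all but finitely many factors act trivially, so the limit is well defined, the result is smooth in every chart, and $\mathscr{R}\colon \Omega^*_{p,p}(K)\to S\mathscr{L}^*_p(K)$ commutes with $d$ (each factor does). For the homotopy I would use the composition rule for chain homotopies: since $\mathcal{R}^{(i)}_{\varepsilon} - \mathrm{Id} = d\mathcal{A}^{(i)}_{\varepsilon} + \mathcal{A}^{(i)}_{\varepsilon}d$ and each $\mathcal{R}^{(i)}_{\varepsilon}$ commutes with $d$, a telescoping computation over the finite compositions yields
$$
\mathscr{R} - \mathrm{Id} = d\mathscr{A} + \mathscr{A}d, \qquad \mathscr{A} = \sum_i \mathcal{A}^{(i)}_{\varepsilon}\circ\big(\mathcal{R}^{(i-1)}_{\varepsilon}\circ\cdots\circ\mathcal{R}^{(1)}_{\varepsilon}\big),
$$
the sum being locally finite for the same reason.

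The main obstacle is the uniform boundedness of $\mathscr{R}$ and $\mathscr{A}$ in $\mathsf{Ban}_\infty$, since a priori an infinite composition of operators of norm slightly above $1$ need not be bounded. Here bounded geometry is essential: the value of $\mathscr{R}\omega$ over any single ball is produced by a composition of at most $N_0$ of the local operators (those whose balls meet it), and the domain of dependence of each local operator sits in a uniformly bounded neighbourhood; hence $\|\mathscr{R}\omega\|_{L_p(B_i)} \le \tilde{C}(\varepsilon)^{N_0}\|\omega\|_{L_p(B_i^+)}$ on a controlled enlargement $B_i^+$, and summing over the cover of multiplicity $N_0$ gives $\|\mathscr{R}\omega\|_{\Omega_{p,p}} \le c_0\,\tilde{C}(\varepsilon)^{N_0}\|\omega\|_{\Omega_{p,p}}$ with $c_0 = c_0(N_0)$; the same argument with $\tilde{M}(\varepsilon)$ bounds $\mathscr{A}$. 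The remaining delicate point is compatibility across the lower-dimensional skeleton, where independently smoothed cells must still patch into a genuine element of $\Omega^*_{p,p}(K)$; this is where the extension Lemmas and the uniform bi-Lipschitz structure forced by $L=1$ enter, letting one control the error introduced near faces by a Cauchy-sequence argument exactly as in the extension Lemma for skeleta. Once boundedness and compatibility are established, commutativity of the squares in the stated diagram is automatic, since $\mathscr{R}$ commutes with $d$ and lands in $S\mathscr{L}^*_p(K)$, whose inclusions close up the diagram.
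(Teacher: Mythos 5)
Your proposal follows essentially the same route as the paper's proof: localized operators $\mathcal{R}_i = (\varphi_i^{-1})^{*}\mathcal{R}_{\varepsilon}\varphi_i^{*}$ and $\mathcal{A}_i$ supported in bi-Lipschitz chart neighbourhoods of the stars, $\mathscr{R}$ defined as the limit of ordered compositions and $\mathscr{A}$ as the telescoped sum of the local homotopies, with star-boundedness guaranteeing that only a uniformly bounded number of factors act nontrivially on each star (after shrinking the $\varepsilon_j$ to control support creep), so that the local bounds $\|\mathcal{R}_{\varepsilon}\|_p \le C(\varepsilon)$, $\|\mathcal{A}_{\varepsilon}\|_p \le M(\varepsilon)$ sum over the cover to give boundedness in ${\sf Ban}_{\infty}$. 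Your formula $\mathscr{A} = \sum_i \mathcal{A}^{(i)}\circ\mathcal{R}^{(i-1)}\circ\cdots\circ\mathcal{R}^{(1)}$ is just the mirror-image telescoping of the paper's $\mathscr{A}\omega = \sum_i \mathcal{R}_1\cdots\mathcal{R}_{i-1}\mathcal{A}_i\omega$ (an immaterial ordering convention), and the skeleton-compatibility worry you raise at the end is not actually needed for this theorem, since the operators act on forms defined on whole star neighbourhoods rather than cell by cell.
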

In order to prove the above theorem we will state a number of lemmas about the arrows of the diagram.  
Let $K$ be a star-bounded complex. Assume that $K'$ is the first barycentric subdivision of $K$. Let  $\Sigma'_i$ be the star of vertex $e_i$ in $K'$.  
Let $\varphi_i$ be a bi-Lipschitz homeomorphism $\varphi_i\colon {\rm Int}\,\Sigma_i \to U$ such that ${\bf B_1}\subset U$ and   $\Sigma^{\prime}_i \subset  {\rm Int}\,\varphi^{-1}({\bf B}_1) $.

Given $\varepsilon > 0$, define operators $\mathcal R_i$ and $\mathcal A_i$
$$
\mathcal R_i \omega = \begin{cases}
 (\varphi_i^{-1})^* {\mathcal R_{\varepsilon}} \varphi_i^* \omega~\text{on}~\Sigma_i\\
\omega,\text{~otherwise}
\end{cases};~\mathcal A_i \omega = \begin{cases}
(\varphi_i^{-1})^* {\mathcal A_{\varepsilon}} \varphi_i^*\omega ~\text{on}~\Sigma_i\\
0, \text{~otherwise}
\end{cases}
$$ 

Consider operators
$$
\mathscr{R}\omega  = \lim\limits_{i\to\infty} \mathcal R_1\mathcal R_2\dots \mathcal  R_i \omega
$$
$$
\mathscr{A}\omega  = \sum \limits^{\infty}_{i=1} \mathcal R_1\mathcal R_2\dots \mathcal  R_{i-1} \mathcal A_i\omega
$$

\begin{lemma}
The arrow $\Omega^{*}_{p,\,p}({ K}) \xrightarrow{\mathscr{R}} S\mathscr{L}^{k}_p(K)$  is a morphism in the category ${\sf Ban}_{\infty}$,
namely, $\mathscr R$ is  a bounded operator. 
\end{lemma}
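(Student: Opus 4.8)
\emph{Strategy.} The plan is to realize $\mathscr R$ as a strong limit of the finite partial compositions $F_i:=\mathcal R_1\mathcal R_2\cdots\mathcal R_i$ and to establish a bound $\|F_i\|\le B$ that is \emph{uniform in} $i$; the lemma then follows by passing to the limit. I would use three structural features recorded earlier: each $\mathcal R_i$ equals the identity off $\Sigma_i$ with $\operatorname{supp}(\mathcal R_i-\mathrm{Id})\omega\subseteq\varphi_i^{-1}({\bf B}_1)\subset\Sigma_i$; each $\mathcal R_i$ commutes with $d$; and the local estimate $\|\tilde{\mathcal{R}}_{\varepsilon}\|_p\le\tilde C(\varepsilon)$ transported through the charts $\varphi_i$. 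Because $K$ has bounded geometry with $L=1$, every simplex is isometric to the standard one and, by star-boundedness, there are only finitely many isometry types of stars $\Sigma_i$; hence the $\varphi_i$ may be chosen with bi-Lipschitz constants bounded independently of $i$, giving a single $\tilde C(\varepsilon)$ (with $\tilde C(\varepsilon)\to1$ as $\varepsilon\to0$) for which, since $\mathcal R_i$ is the identity off $\Sigma_i$,
\[
\|\mathcal R_i\eta\|_{L_p}^p\ \le\ \|\eta\|_{L_p}^p+\bigl(\tilde C(\varepsilon)^p-1\bigr)\,\|\eta\|_{L_p(\Sigma_i)}^p .
\]
The same holds for $d\eta$ as $\mathcal R_i$ is a chain map, so the inequality is valid for the full graph norm. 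The geometric input I would isolate next is that $\{\Sigma_i\}$ is a covering of uniformly bounded multiplicity: star-boundedness together with $L=1$ forces every simplex to meet at most $N=N(C,n)$ of the stars, whence $\sum_i\|\eta\|_{\Omega_{p,p}(\Sigma_i)}^p\le N\,\|\eta\|_{\Omega_{p,p}}^p$.

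Next I would settle well-definedness on the dense subspace $\bigwedge^k\Omega_{\mathscr L}(K)$ of compactly supported Lipschitz forms. If $\omega$ is supported on finitely many simplices and $\Sigma_i\cap\operatorname{supp}\omega=\varnothing$, then $\varphi_i^*\omega=0$ on the chart and $\mathcal R_i\omega=\omega$; consequently $F_i\omega-F_{i-1}\omega=F_{i-1}(\mathcal R_i-\mathrm{Id})\omega=0$ for all large $i$, so the partial compositions $F_i\omega$ are \emph{eventually constant}. Thus on this subspace $\mathscr R\omega$ is an honest finite composition of regularizing operators; in particular it is smooth in every chart and compactly supported, so $\mathscr R\omega\in S\mathscr L^k_p(K)$.

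The heart of the matter, and the step I expect to be the main obstacle, is the \emph{uniform} bound $\|F_i\|\le B$. A naive estimate only gives $\tilde C(\varepsilon)^i$, which is useless; the point is that the composition is almost local. Unwinding the composition with the displayed localized inequality yields
\[
\|F_i\omega\|_{\Omega_{p,p}}^p\ \le\ \|\omega\|_{\Omega_{p,p}}^p+\bigl(\tilde C(\varepsilon)^p-1\bigr)\sum_{j=1}^{i}\bigl\|F_{[j+1,i]}\,\omega\bigr\|_{\Omega_{p,p}(\Sigma_j)}^p ,
\]
where $F_{[j+1,i]}=\mathcal R_{j+1}\cdots\mathcal R_i$. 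The crux is to dominate the interior sum by $K\,\|\omega\|_{\Omega_{p,p}}^p$ with $K$ uniform in $i$. Here I would exploit that for $\varepsilon$ small the smoothing $\mathcal R_\varepsilon$ reads its argument only over an $O(\varepsilon)$-neighbourhood, so a change made in one star propagates only through genuinely overlapping stars; combined with the bounded multiplicity $N$ and with the fact that the amplification $\tilde C(\varepsilon)^p-1$ can be made arbitrarily small, this should permit a bootstrap (an induction on the tail length $i-j$, or a telescoping of the excess energy) that closes to a bound independent of $i$. This is precisely where bounded geometry is indispensable and is the technically demanding part; controlling the nonlocal propagation of the composition is the essential difficulty.

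Finally I would conclude. Having $\|F_i\|\le B$ and convergence of $F_i\omega$ on the dense subspace of compactly supported forms, the standard principle (uniform boundedness plus convergence on a dense set) shows $F_i\omega$ converges for every $\omega\in\Omega^k_{p,p}(K)$, with limit $\mathscr R\omega$ satisfying $\|\mathscr R\omega\|_{\Omega_{p,p}}\le B\,\|\omega\|_{\Omega_{p,p}}$. Because the value of $\mathscr R\omega$ on each simplex is produced by the finitely many regularizations whose star meets that simplex, $\mathscr R\omega$ is smooth in charts, i.e. $\mathscr R\omega\in S\mathscr L^k_p(K)$. Hence $\mathscr R$ is a bounded operator $\Omega^*_{p,p}(K)\to S\mathscr L^k_p(K)$, a morphism in ${\sf Ban}_\infty$.
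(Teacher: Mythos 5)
Your setup (localization of each $\mathcal R_i$ to its star, the chain-map property $d\mathscr R=\mathscr R d$, uniformly bounded chart constants, bounded multiplicity of the star covering) matches the paper's, but there is a genuine gap at exactly the point you flag as ``the heart of the matter'': the uniform bound $\|F_i\|\le B$ is never proved. Your telescoped inequality bounds the excess by $\bigl(\tilde C(\varepsilon)^p-1\bigr)\sum_{j=1}^{i}\|F_{[j+1,i]}\omega\|^p_{\Omega_{p,p}(\Sigma_j)}$, but the summands involve the very tails $F_{[j+1,i]}$ whose norms you are trying to control, so the estimate is circular; the proposed ``bootstrap (an induction on the tail length, or a telescoping of the excess energy)'' is named but not carried out, and smallness of $\tilde C(\varepsilon)^p-1$ alone does not close it, since the number of terms grows with $i$. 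A proof that defers its central estimate to an unexecuted induction is incomplete.

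The paper closes this without any bootstrap, by making the composition literally finite on each barycentric star, for \emph{arbitrary} $\omega$, not just compactly supported ones. Fix $\Sigma^{\prime}_i$ and split $\omega=\omega_1+\omega_2$ with a bump function $\alpha$, where $\omega_1=\alpha\omega$ is supported inside $\varphi_i^{-1}({\bf B}_1)$ and $\omega_2$ is supported off $X_i$. Choosing the $\varepsilon_j$ small enough to preserve supports at every step, one gets ${\rm supp}(\mathcal R_1\cdots\mathcal R_j\,\omega_2)\subset K\setminus\Sigma^{\prime}_i$ for all $j$, so $\mathscr R\omega=\mathscr R\omega_1$ on $\Sigma^{\prime}_i$; and for $\omega_1$ only the operators $\mathcal R_{j_1},\dots,\mathcal R_{j_n}$ attached to the vertices spanning $\Sigma_i$ act nontrivially, whence ${\rm res}_{\Sigma^{\prime}_i,\,K}\circ\mathscr R\,\omega$ is a composition of at most $n$ operators of norm $\le 1+\varepsilon$ applied to $\omega_1$, with $n$ bounded by star-boundedness. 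This yields the local estimate $\|{\rm res}_{\Sigma^{\prime}_i,\,K}\circ\mathscr R\,\omega\|_{\Omega^k_{p,p}(\Sigma^{\prime}_i)}\le(1+O(\varepsilon))\,\|{\rm res}_{\Sigma_i,\,K}\,\omega\|_{\Omega^k_{p,p}(\Sigma_i)}$ directly --- no interaction between distant stars ever has to be estimated --- and summing over stars with bounded multiplicity gives the global bound. Your own closing remark (``the value of $\mathscr R\omega$ on each simplex is produced by the finitely many regularizations whose star meets that simplex'') is precisely this fact; had you used it to bound the operator star-by-star instead of unwinding the composition globally, your argument would have closed. The remaining ingredients of your proposal (eventual constancy of $F_i\omega$ on compactly supported forms, transfer of the estimate to $d\omega$, and membership $\mathscr R\omega\in S\mathscr L^k_p(K)$ via $\mathcal R_\varepsilon\varphi_i^*$ landing in smooth forms with $(\varphi_i^{-1})^*$ Lipschitz piecewise smooth) agree with the paper.
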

\begin{proof}
Consider a star $\Sigma_i$ of $K$.   Assume that 
there is a set $X_i$ such that  \linebreak
${ \Sigma^{\prime}_i \subset {\rm Int }\,X_i \subset {\rm Int}\, \varphi^{-1}({\bf B}_1) } $.
We can represent  $\omega$ as a sum $\omega = \omega_1 + \omega_2$ 

\begin{center}
\begin{tikzpicture}
\draw[very thick,|-|] (-1,0)--(1,0);
\draw[thick,|-|] (-1.7,0)--(1.7,0);
\draw[thick,|-|] (-3.3,0)--(3.3,0);
\draw[thick,|-|] (-2.9,0)--(2.9,0);
\draw[->](0,0) -- (0,3);
\draw[thick] (-2.7,0)--(-2.5,0)--(-1.7,1)--(1.7,1)--(2.5,0)--(2.7,0);
\draw (0.9,1.2) node[font = \fontsize{7}{30}]{$\alpha\colon K \to [0,\,1]$};
\draw (0,-0.2) node[font = \fontsize{7}{30}]{$\Sigma^{\prime}_i$ };
\draw (1.7,-0.3) node[font = \fontsize{7}{30}]{$X_i$ };
\draw (-2.9,-0.3) node[font = \fontsize{7}{30}]{$ \varphi^{-1}({\bf B}_1)$ };
\draw (3.3,-0.3) node[font = \fontsize{7}{30}]{$\Sigma_i$ };
\draw (4.5,-0.3) node[font = \fontsize{7}{30}]{$K$ };
\draw[->](-1,0) -- (-4.5,0);
\draw[->](1,0) -- (4.5,0);
\end{tikzpicture}
\end{center}
where $\omega_1 = \alpha\omega$ and $\omega_2 = (1-\alpha)\omega$, i. e. ${{\rm supp} (\omega_2) \subset K \setminus X_i}$.

For any $\mathcal R_j$ and $\eta \in \Omega^k(K)$ such that  
${{\rm supp} (\eta) \subset K \setminus X_i}$, choosing $\varepsilon>0$ sufficiently small, 
we can achieve  ${{\rm supp} (\mathcal R_j \eta) \subset K \setminus \Sigma^{\prime}_i}$  
that implies
$R_j \eta  = 0$ on $\Sigma^{\prime}_i$. Due to this fact,  for each $j$ 
we can choose $\varepsilon_j$ in the definition of  the operator $\mathcal R_j$ in such a way that 
${{\rm supp} ( \mathcal R_{1}\dots  \mathcal R_{j} \omega_2) \subset K \setminus \Sigma^{\prime}_i}$  and correspondingly
${ \mathscr R \omega  = \mathscr  R\omega_1}$ on $\Sigma^{\prime}_i$. 

Let  $\Sigma_i$ be spanned by points $\{e_{j_1}, \dots,\, e_{j_n}\}$. 
For every form $\theta$ such that ${\rm{supp}(\theta) \subset \varphi^{-1}({\bf B}_1) \subset \Sigma_i}$ 
we can see that only for $k \in \{{j_1}, \dots,\, {j_n}\}$ the operator $\mathcal R_k$ is distinct from the identity.
Choosing sufficiently small $\varepsilon$ each time we face such an operator $R_k \in \{R_{j_1}\,\dots,\,R_{j_n}\}$ in the composition ${\mathcal R_{1} \mathcal  R_{2} \dots \mathcal R_j }$
we can obtain a map  preserving the support of a form derived at  this step  inside ${\rm Int}\, \Sigma_i$.
Then we have ${\mathcal R_{1}\dots\mathcal  R_{j} \omega_1 = \mathcal R_{j_1}\dots\mathcal  R_{j_n} \omega_1}$ for each $j$. 
 And so $\mathscr R \omega_1 = \mathcal R_{j_1}\dots\mathcal  R_{j_n} \omega_1$.

We know that 
$$
\mathcal  R_{j_{k}}  \colon \Omega^{*}_p(\Sigma_i) \to  \Omega^{*}_p(\Sigma_i) 
$$
and moreover  the operator 
$$
\|\mathcal  R_{j_{k},\,(\varepsilon)}\|_{p} \le 1 + \varepsilon,~ \varepsilon \to 0.
$$
Then  there exists $\varepsilon > 0$ such that 
$$
 \| \mathscr R \|_p = \|\mathcal R_{j_1}\dots\mathcal  R_{j_n}\|_p \le 1 + O(\varepsilon),~ \varepsilon \to 0.
$$
We should make a note that $d \mathscr R = \mathscr R d$ and the above argument holds for $d \omega$.

As a result for each $i$ we have
$$
 \| {\rm res}_{\Sigma_i^{\prime},\,K} \circ \mathscr R \omega \|_{\Omega^k_{p,\,p}(\Sigma_i^{\prime})}  
 \le (1 + O(\varepsilon_i))\|{\rm res}_{\Sigma_i,\,K} \omega \|_{\Omega^k_{p,\,p}(\Sigma_i)},
 ~ \varepsilon_i \to 0.
$$
Due to the star-boundedness of the complex we can choose $\varepsilon = \min_i {\varepsilon_i}$. Let $\omega \in \Omega^k_{p,\,p}(K)$
\begin{align*}
\| \mathscr R \omega\|_{\Omega^k_{p,\,p}(K)} = \sum_{i}  \| {\rm res}_{\Sigma^{\prime}_i,\,K} \circ \mathscr R \omega \|_{\Omega^k_{p,\,p}(K)}
& \le \sum_{i}(1 + O(\varepsilon))\|{\rm res}_{\Sigma_i,\,K} \omega \|_{\Omega^k_{p,\,p}(\Sigma_i)}\\
\le \frac{(1 + O(\varepsilon))}{n}&\|\omega \|_{\Omega^k_{p,\,p}(K)}
\end{align*}
In the light of what we have just said, $\mathscr R$ is a bounded map:
$$
\mathscr R \colon \Omega^{*}_{p,\,p}(K) \to \Omega^{*}_{p,\,p}(K).
$$
Moreover, it is not hard to see that  $\mathscr R \colon \Omega^{*}_{p,\,p}(K) \to S\mathscr{L}_p^{*}(K)$. Indeed, 
we know that every $\mathcal R_i =  (\varphi_i^{-1})^* {\mathcal R_{\varepsilon}} \varphi_i^* $, then 
$$
 {\mathcal R_{\varepsilon}} \varphi_i^* \colon  \Omega^{*}_{p,\,p}(\Sigma_i) \to \Omega^{*}_{\scriptscriptstyle{\rm smooth}}(U), 
$$
and $(\varphi_i^{-1})^*$ is a Lipschitz piecewise smooth map. 
\end{proof}

\begin{lemma}
For every $m$-dimensional skeleton $K[m]$ of $K$ the operator \linebreak
${{\rm res}_{K[m],\, K}\circ \mathscr R}$
is a morphism $\Omega^{*}_{p,\,p}({ K}) \to S\mathscr{L}^{k}_p(K[m])$ in $\mathsf{Ban}_{\infty}$ (a bounded operator).
\end{lemma}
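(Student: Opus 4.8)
The plan is to factor the operator $\mathrm{res}_{K[m],\,K}\circ\mathscr R$ through the local structure established in the preceding lemma and then to exploit the $L_\infty$-estimate on compact subsets. First I would recall that on each star $\Sigma^{\prime}_i$ the form $\mathscr R\omega$ coincides with a finite composition $\mathcal R_{j_1}\dots\mathcal R_{j_n}\omega_1$ of local regularizations, each of the form $(\varphi^{-1}_{j_k})^*\mathcal R_{\varepsilon}\varphi^*_{j_k}$, so that $\mathscr R\omega$ is genuinely smooth (an image of a smooth form under a bi-Lipschitz map) inside each star. In particular its restriction to any lower-dimensional face is well defined pointwise, which is exactly what makes the formally measure-zero restriction to $K[m]$ meaningful; for a generic $L_p$-form no such restriction would exist.

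Next I would observe that the portion $K[m]\cap\Sigma^{\prime}_i$ of the $m$-skeleton lying in the $i$-th star is carried by $\varphi_i$ onto a compact subset $F\subset\mathrm{Int}\,{\bf B}_1$. Applying the earlier lemma which asserts that $\mathrm{res}_{F,\,U}\circ\mathcal R_{\varepsilon}$ is a bounded operator $\Omega^k_p(U)\to\Omega^k_\infty(F)$ to each factor of the finite composition, I obtain a uniform pointwise bound
$$
\|\mathrm{res}_{K[m]\cap\Sigma^{\prime}_i,\,K}\circ\mathscr R\,\omega\|_{\Omega^k_\infty} \le C\,\|\mathrm{res}_{\Sigma_i,\,K}\,\omega\|_{\Omega^k_{p,\,p}(\Sigma_i)}.
$$
Because the complex has bounded geometry with $L=1$, the number of simplices in $K[m]\cap\Sigma^{\prime}_i$ and hence its $m$-dimensional measure are bounded above by a constant independent of $i$, so the $L_\infty$-bound on each piece converts into an $L_p$-bound of the same order. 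Summing over $i$ and invoking star-boundedness exactly as in the proof of the previous lemma then yields
$$
\|\mathrm{res}_{K[m],\,K}\circ\mathscr R\,\omega\|_{S\mathscr{L}^k_p(K[m])} \le C'\,\|\omega\|_{\Omega^k_{p,\,p}(K)}.
$$

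To complete the graph-norm estimate I would treat the differential separately. Since $d\mathscr R=\mathscr R d$ and the restriction to a face is induced by an isometric embedding which commutes with the exterior derivative, the differential of the restricted form equals the restriction of $\mathscr R(d\omega)$, and the identical argument applied to $d\omega$ controls the $d$-part of the $S\mathscr{L}^k_p(K[m])$-norm. Finally I would note that the restricted form genuinely lies in the target class: the restriction of a smooth form to a face of a simplex is again smooth, and the bi-Lipschitz charts $\varphi_i$ restrict to bi-Lipschitz charts on the skeleton, so membership in $S\mathscr{L}^k_p(K[m])$ is automatic. I expect the main obstacle to be the first step, namely legitimizing the restriction to the measure-zero skeleton; this is overcome precisely by the smoothing effect of $\mathscr R$ together with the $L_\infty$-estimate of the earlier lemma, which is the one place where the regularization — rather than a mere bounded $L_p$-operator — is indispensable.
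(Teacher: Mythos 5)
Your proposal is correct and follows essentially the same route as the paper: localize on stars using the finite-composition structure of $\mathscr R$ from the preceding lemma, invoke the $\Omega^k_p(U)\to\Omega^k_\infty(F)$ smoothing estimate to legitimize and bound the restriction to the measure-zero skeleton, convert $L_\infty$ to $L_p$ via the uniformly bounded simplex measures, sum using star-boundedness, and handle the graph norm through $d\mathscr R=\mathscr R d$. The only cosmetic difference is that the paper organizes the counting through top-dimensional simplices covered by barycentric stars (with the explicit $\binom{n+1}{m+1}$ and $\frac{\sqrt{m+1}}{m!\sqrt{2^m}}$ constants) rather than directly through the pieces $K[m]\cap\Sigma'_i$, which changes nothing of substance.
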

\begin{proof}
From now on we will follow the notation stated in the proof of  \emph{Lemma 2}. 
In particular, let a star  $\Sigma_i$ be spanned by points $\{e_{j_1}, \dots,\, e_{j_n}\}$. Every 
$n$-dimensional simplex $\sigma$ can be covered by stars of $K^{\prime}$ 
$$\sigma \subset \bigcup_{k = 0}^{n-1} \Sigma^{\prime}_{j_k}$$ 
that applying \emph{Lemma 3} and the argument from the proof of \emph{Lemma 2}  implies
 $$\| {\rm res}_{\sigma,\, K} \circ \mathscr R \omega \|_{\Omega^{*}_{\infty}(\sigma)} 
 \le \sum \limits_{k = 0}^{n} \| {\rm res}_{\Sigma^{\prime}_{j_k},\, K} \circ \mathscr R \omega \|_{\Omega^{*}_{\infty}(\Sigma^{\prime}_{j_k})}
  \le C \sum \limits_{k = 0}^{n} \| {\rm res}_{\Sigma_{j_k},\, K}  \omega \|_{\Omega^{*}_{p}(\Sigma_{j_k})}$$
and 
\begin{align*}
\sum_i \| {\rm res}_{\sigma_i,\, K} \circ \mathscr R \omega \|_{\Omega^{*}_{\infty}(\sigma_i)} 
  \le &C \sum_i \sum \limits_{k = 0}^{n} \| {\rm res}_{\Sigma^i_{j_k},\, K}  \omega \|_{\Omega^{*}_{p}(\Sigma^i_{j_k})} \\
  \le C  N \sum_j \| {\rm res}_{\Sigma_{j},\, K}  \omega \|_{\Omega^{*}_{p}(\Sigma_{j})}
    \le &C  N n \sum_i \| {\rm res}_{\sigma_{i},\, K}  \omega \|_{\Omega^{*}_{p}(\sigma_{i})} = C^{\prime}\|\omega \|_{\Omega^{*}_{p}(K)}.
  \end{align*}
 Hence 
 \begin{align*}
 \|{\rm res}_{K[m],\, K}\circ \mathscr R \omega\|_ {\Omega^{k}_p(K[m])} = \sum_{i} \|{\rm res}_{\tau_i,\, K}\circ \mathscr R \omega\|_ {\Omega^{k}_p(\tau_i)}\\
 \le\sum_{i}  ({\rm mes}\, \tau_i)^{\frac{1}{p}} \|{\rm res}_{\tau_i,\, K}\circ \mathscr R \omega\|_ {\Omega^{k}_{\infty}(\tau_i)}.
 \end{align*}
 It is not hard to see that
 $$
 \|{\rm res}_{\tau_i,\, K}\circ \mathscr R \omega\|_ {\Omega^{k}_{\infty}(\tau_i)} 
 \le \sum_{\sigma \in K[n],\,\tau_i \hookrightarrow \sigma}\|{\rm res}_{\sigma,\, K}\circ \mathscr R \omega\|_ {\Omega^{k}_{\infty}(\sigma)} 
$$
As a result we have
\begin{align*}
\|{\rm res}_{K[m],\, K}\circ \mathscr R \omega\|_ {\Omega^{k}_p(K[m])} 
&\le \bigg( \frac{\sqrt{m+1}}{m!\sqrt{2^{m}}}\bigg)^{\frac{1}{p}} 
\sum_{i}  \sum_{\sigma \in K[n],\,\tau_i \hookrightarrow \sigma}\|{\rm res}_{\sigma,\, K}\circ \mathscr R \omega\|_ {\Omega^{k}_{\infty}(\sigma)} \\
&\le \bigg( \frac{\sqrt{m+1}}{m!\sqrt{2^{m}}}\bigg)^{\frac{1}{p}} \binom{n+1}{m+1}C^{\prime}\|\omega \|^p_{\Omega^{*}_{p}(K)}.
\end{align*}
Here we should again  make a note that $d \mathscr R = \mathscr R d$ and the above argument holds for $d \omega$.
\end{proof}
\begin{lemma}
The arrow 
$\Omega^{k}_{p,\,p}({ K}) \xrightarrow{\mathscr{A}} \Omega^{k-1}_{p,\,p}({ K})$ is a morphism in  the category $\mathsf{Ban}_{\infty}$. 
\end{lemma}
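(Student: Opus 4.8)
The plan is to bound the graph norm $\|\mathscr{A}\omega\|_{\Omega^{k-1}_{p,\,p}(K)} = \big(\|\mathscr{A}\omega\|_{L_p}^p + \|d\mathscr{A}\omega\|_{L_p}^p\big)^{1/p}$ in two stages. First I would control the bare $L_p$-norm of $\mathscr{A}\omega$ by the same support-propagation machinery used above for $\mathscr{R}$; then I would dispose of $d\mathscr{A}\omega$ not by differentiating the series term by term, but by converting it, via the local Cartan relation, into operators already known to be bounded.

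For the first stage I would prove $\|\mathscr{A}\omega\|_{L_p} \le C_1\|\omega\|_{L_p}$. The basic bounds $\|\mathcal R_{\varepsilon}\|_p \le C(\varepsilon)$ and $\|\mathcal A_{\varepsilon}\|_p \le M(\varepsilon)$, with $C(\varepsilon)\to 1$ and $M(\varepsilon)\to 0$, persist after conjugation by the bi-Lipschitz charts $\varphi_i$, with constants uniform in $i$ because of the bounded geometry hypothesis $L=1$. I would then reuse the support argument from the proof of the boundedness of $\mathscr R$: since $\mathcal A_i\omega$ is supported in $\Sigma_i$ and each $\mathcal R_j$ equals the identity away from $\Sigma_j$ and, for $\varepsilon$ small, displaces supports only slightly, the support of each summand $\mathcal R_1\cdots\mathcal R_{i-1}\mathcal A_i\omega$ remains in a fixed bounded neighbourhood of $\Sigma_i$. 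Star-boundedness then forces only a uniformly bounded number $N$ of the summands to be nonzero over any single star $\Sigma'_{\ell}$, each contributing at most $C(\varepsilon)^{N}M(\varepsilon)$ times the local norm of $\omega$; summing over stars with bounded overlap gives $\|\mathscr{A}\omega\|_{L_p(K)} \le C_1\|\omega\|_{L_p(K)}$, and in particular establishes convergence of the defining series.

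For the second stage I would use that each $\mathcal R_j$ commutes with $d$ together with the local relation $\mathcal R_i - \mathrm{Id} = d\mathcal A_i + \mathcal A_i d$ (which follows from the model identity $\mathcal R_{\varepsilon}-\mathrm{Id} = d\mathcal A_{\varepsilon}+\mathcal A_{\varepsilon}d$ by conjugation, both sides vanishing off $\Sigma_i$). Telescoping the product $\mathcal R_1\cdots\mathcal R_i$ and passing to the limit yields the homotopy identity
$$
d\mathscr{A}\omega = \mathscr{R}\omega - \omega - \mathscr{A}(d\omega).
$$
Hence $\|d\mathscr{A}\omega\|_{L_p} \le \|\mathscr{R}\omega\|_{L_p} + \|\omega\|_{L_p} + \|\mathscr{A}(d\omega)\|_{L_p}$, where the first term is controlled by the boundedness of $\mathscr{R}$ established above, the second by $\|\omega\|_{\Omega^k_{p,\,p}}$, and the third by applying the first-stage estimate to $d\omega$, giving $\|\mathscr{A}(d\omega)\|_{L_p} \le C_1\|d\omega\|_{L_p} \le C_1\|\omega\|_{\Omega^k_{p,\,p}}$. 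Combining the two stages produces $\|\mathscr{A}\omega\|_{\Omega^{k-1}_{p,\,p}(K)} \le C\|\omega\|_{\Omega^k_{p,\,p}(K)}$, as required for a morphism in $\mathsf{Ban}_{\infty}$.

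The hard part is precisely that $\mathscr{A}$ does not commute with $d$, so the graph-norm bound cannot be read off directly from $\|\mathcal A_{\varepsilon}\|_p \le M(\varepsilon)$; this is the reason one must first prove the \emph{bare} $L_p$-boundedness of $\mathscr{A}$ and then recover $d\mathscr{A}$ through the homotopy identity, feeding $d\omega$ back into the same estimate. A secondary technical point, again resolved by bounded geometry exactly as in the treatment of $\mathscr{R}$, is securing uniformity of the chart-conjugation constants and the finiteness of the number of contributing summands over each star.
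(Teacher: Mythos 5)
Your proposal is correct, and its first stage is essentially the paper's argument: the paper likewise observes that ${\rm supp}(\mathcal A_i\omega)\subset\varphi^{-1}({\bf B}_1)$ (because $\mathfrak X_v=0$ where $\mathfrak s_{tv}$ is the identity), reduces each summand $\mathcal R_1\cdots\mathcal R_{i-1}\mathcal A_i\omega$ to a finite product $\mathcal R_{j_1}\cdots\mathcal R_{j_n}\mathcal A_i\omega$ supported in $\Sigma_i$ by shrinking the $\varepsilon_j$, and then sums over top-dimensional simplices, using the fact that each $\sigma$ lies only in the stars of its $n+1$ vertices so that only $n+1$ summands survive on $\sigma$ (this is the paper's bounded-overlap count, where you phrased it via stars). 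Where you genuinely diverge is the treatment of $d\mathscr A\omega$: the paper estimates it by the \emph{same} term-by-term decomposition, carrying $\sum_i {\rm res}_{\sigma,K}\, d\mathcal R_1\cdots\mathcal R_{i-1}\mathcal A_i\omega$ through the identical simplex-wise computation and dismissing it with ``similarly''; note that even this route implicitly needs the local identity $d\mathcal A_i = \mathcal R_i - \mathrm{Id} - \mathcal A_i d$, since the model lemma bounds only $\mathcal A_\varepsilon$, not $d\mathcal A_\varepsilon$, in $L_p$. You instead telescope the products globally to get $d\mathscr A\omega = \mathscr R\omega - \omega - \mathscr A(d\omega)$ and import the boundedness of $\mathscr R$ from the preceding lemma, feeding $d\omega$ back into your stage-one estimate. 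This is a clean reorganization that makes explicit what the paper's ``similarly'' leaves tacit, and it is not circular with respect to the homotopy statement of the main theorem, because the identity follows from finite telescoping, commutation of the $\mathcal R_j$ with $d$, and the series convergence your first stage supplies (with $d$ read distributionally, i.e.\ using that $d$ is closed on the graph-norm closure $\Omega^*_{p,\,p}$). What the paper's termwise route buys is a self-contained estimate with explicit local constants $C^n_p M_p$ per summand and no dependence on the $\mathscr R$-lemma as a stated prerequisite; what yours buys is economy, a single genuinely new estimate (the bare $L_p$ bound on the series), and an argument that simultaneously delivers the homotopy identity used afterwards.
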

\begin{proof}
Let $\omega \in \Omega^k_{p,\,p}(M)$ consider  $\mathcal A_i\omega$. It is not hard to see that  \linebreak
${\rm{supp}(\mathcal A_i \omega) \subset \varphi^{-1}({\bf B}_1)}$. 
Indeed, $\mathfrak s_{tv}^*$  acts on  the complement $K \setminus \varphi^{-1}({\bf B}_1)$  leaving points fixed. 
It follows that  $ \frak X_v = 0$ and,  consequently, $\iota_{\frak X_v}$ is a zero map at every point belonging $K \setminus \varphi^{-1}({\bf B}_1)$.

Let  $\Sigma_i$ be spanned by points $\{e_{j_1}, \dots,\, e_{j_n}\}$. 
For every form 
compactly supported inside  ${\varphi^{-1}({\bf B}_1) \subset \Sigma_i}$ there are only a fixed number of operators, namely  $\mathcal R_{j_1},\dots,\, R_{j_n}$, which are distinct from the identity.
Similarly to the above we can choose sufficiently small $\varepsilon$  for each operator $R_{j_k}$ in the composition ${\mathcal R_{1}\dots\mathcal  R_{i-1}\mathcal A_i }$,
that allows us to preserve the support of a form derived under every partial composition inside ${\rm Int}\, \Sigma_i$.
Then we have \linebreak
${\mathcal R_{1}\dots\mathcal  R_{i-1}\mathcal A_i \omega = \mathcal R_{j_1}\dots\mathcal  R_{j_n} \mathcal A_i \omega}$ and 
${\mathcal R_{j_1}\dots R_{j_n} \mathcal A_i \omega = 0}$  outside $\Sigma_i$. Now we can estimate the norm $\|\mathscr A \omega\|_{\Omega^{k-1}_p(K)}$:
$$
\|\mathcal R_{j_1}\dots R_{j_n} \mathcal A_i \omega\|_{\Omega^{k-1}_p(\Sigma_i)} \le C^{n}_p M_p\|\omega\|_{\Omega^{k}_p(\Sigma_i)}.
$$
\begin{align*}
&\|\mathscr A \omega\|^p_{\Omega^{k-1}_{p,\,p}(K)} = \|\mathscr A \omega\|^p_{\Omega^{k-1}_{p,\,p}(K)} + \| d\mathscr A \omega\|^p_{\Omega^{k-1}_{p,\,p}(K)} \\
= &\int\limits_{K}\left|\sum_{i} \mathcal R_{1}\dots R_{i-1} \mathcal A_i \omega  \right|^p dx + \int_{K}\left|\sum_{i} d\mathcal R_{1}\dots R_{i-1} \mathcal A_i \omega  \right|^p dx\\
= &\sum_{\sigma \in K[n]} \bigg( \int\limits_{\sigma} \left|\sum_{i} {\rm res}_{\sigma,\, K} \mathcal R_{1}\dots R_{i-1} \mathcal A_i \omega  \right|^p dx\\
+ &\int\limits_{\sigma}\left|\sum_{i} {\rm res}_{\sigma,\, K} d\mathcal R_{1}\dots R_{i-1} \mathcal A_i \omega  \right|^p dx\bigg)
\end{align*}
Every simplex $\sigma = \{e_0,\dots,\,e_n\}$ is the intersection of stars assigned to its vertices $\sigma = \bigcap_{e_i\in  \{e_0,\dots,\,e_n\}} \Sigma_i$, and moreover 
$\sigma$ lies in no other star. And it follows that 
$$
\sum_{i} {\rm res}_{\sigma,\, K} \mathcal R_{1}\dots R_{i-1} \mathcal A_i \omega  = \sum_{e_j \in \{e_0,\dots,\,e_n\}} {\rm res}_{\sigma,\, K} \mathcal R_{1}\dots R_{j-1} \mathcal A_j \omega 
$$
and
$$
\sum_{i} {\rm res}_{\sigma,\, K} d \mathcal R_{1}\dots R_{i-1} \mathcal A_i \omega  = \sum_{e_j \in \{e_0,\dots,\,e_n\}} {\rm res}_{\sigma,\, K} d \mathcal R_{1}\dots R_{j-1} \mathcal A_j \omega 
$$
Hence
\begin{align*}
\|\mathscr A \omega\|^p_{\Omega^{k-1}_{p,\,p}(K)} 
=& \sum_{\sigma \in K[n]}  \int\limits_{\sigma} \left| \sum_{e_j \in \{e_0,\dots,\,e_n\}}{\rm res}_{\sigma,\, K} \mathcal R_{1}\dots R_{j-1} \mathcal A_j \omega  \right|^p dx\\ 
+&  \sum_{\sigma \in K[n]} \int\limits_{\sigma}\left| \sum_{e_j \in \{e_0,\dots,\,e_n\}} {\rm res}_{\sigma,\, K} d\mathcal R_{1}\dots R_{j-1} \mathcal A_j \omega  \right|^p dx\\
= (n+1)^{p-1} &\sum_{\sigma \in K[n]}  \int\limits_{\sigma}  \sum_{e_j \in \{e_0,\dots,\,e_n\}}|{\rm res}_{\sigma,\, K} \mathcal R_{1}\dots R_{j-1} \mathcal A_j \omega  |^p dx\\
+ (n+1)^{p-1} &\sum_{\sigma \in K[n]}  \int\limits_{\sigma} \sum_{e_j \in \{e_0,\dots,\,e_n\}} |{\rm res}_{\sigma,\, K} d\mathcal R_{1}\dots R_{j-1} \mathcal A_j \omega  |^p dx
\end{align*}
Let us check the following
\begin{align*}
  \int\limits_{\sigma}  &\sum\limits_{e_j \in \{e_0,\dots,\,e_n\}}|{\rm res}_{\sigma,\, K} \mathcal R_{1}\dots R_{j-1} \mathcal A_j \omega  |^p dx\\
  = &\sum_{e_j \in \{e_0,\dots,\,e_n\}}   \int\limits_{\sigma}|{\rm res}_{\sigma,\, K} \mathcal R_{1}\dots R_{j-1} \mathcal A_j \omega  |^p dx\\
  \le &\sum_{e_j \in \{e_0,\dots,\,e_n\}}   \int\limits_{\Sigma_j}| \mathcal R_{1}\dots R_{j-1} \mathcal A_j \omega  |^p dx\\
  = &\sum_{e_j \in \{e_0,\dots,\,e_n\}}   \int\limits_{\Sigma_j}| \mathcal R_{j_0}\dots R_{j_n} \mathcal A_j \omega  |^p dx\\
  = &\sum_{e_j \in \{e_0,\dots,\,e_n\}}  \|\mathcal R_{j_0}\dots R_{j_n} \mathcal A_j \omega  \|^p_{\Omega^{k-1}_p(\Sigma_j)}\\
  \le &\sum_{e_j \in \{e_0,\dots,\,e_n\}}  C^{n}_p M_p\|\omega\|_{\Omega^{k}_p(\Sigma_j)}
\end{align*}
That implies
\begin{align*}
 \sum_{\sigma \in K[n]}  \int\limits_{\sigma}  &\sum_{e_j \in \{e_0,\dots,\,e_n\}}|{\rm res}_{\sigma,\, K} \mathcal R_{1}\dots R_{j-1} \mathcal A_j \omega  |^p dx\\
\le  \sum_{\sigma \in K[n]}  &\sum_{e_j \in \{e_0,\dots,\,e_n\}}  C^{\prime}\|\omega\|_{\Omega^{k}_p(\Sigma_j)}
\le C^{\prime} (n+1)\|\omega\|_{\Omega^{k}_p(K)}
\end{align*}
 
Similarly, we can estimate $\| d\mathscr A \omega\|^p_{\Omega^{k-1}_{p,\,p}(K)}$.
And as  a result we have
$$
\|\mathscr A \omega\|^p_{\Omega^{k-1}_{p,\,p}(K)} \le C^{\prime\prime}\| \omega\|^p_{\Omega^{k}_{p,\,p}(K)}
$$
\end{proof}
Summing up the results of preceding lemmas we can conclude that  the {\it Theorem} 4 holds.
\begin{theorem} There is an isomorphism
$H^{k}(S\mathscr{L}^{*}_p(K))\cong  H^{k} (\Omega^{*}_{p,\,p}({ K})) $ in the category ${\sf Vec}_{\mathbb R}$.
\end{theorem}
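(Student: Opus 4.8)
The plan is to exhibit the asserted isomorphism as the map $\mathscr{R}_*$ induced on cohomology by the regularization operator, and to show that it is inverted by the map $j_*$ induced by the inclusion $j\colon S\mathscr{L}^{*}_p(K)\hookrightarrow\Omega^{*}_{p,\,p}(K)$ which constitutes the vertical arrows of the middle row of the diagram in Theorem 4. First I would record that both $\mathscr{R}$ and $j$ are chain maps: the inclusion $j$ commutes with $d$ because $S\mathscr{L}^{*}_p(K)$ is a subcomplex of $\Omega^{*}_{p,\,p}(K)$, while the relation $d\mathscr{R}=\mathscr{R}d$ was already noted in the course of proving that $\mathscr{R}$ is bounded. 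Consequently they descend to well-defined linear maps $\mathscr{R}_*\colon H^k(\Omega^{*}_{p,\,p}(K))\to H^k(S\mathscr{L}^{*}_p(K))$ and $j_*\colon H^k(S\mathscr{L}^{*}_p(K))\to H^k(\Omega^{*}_{p,\,p}(K))$ in ${\sf Vec}_{\mathbb R}$.

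The argument then reduces to the standard algebraic fact that if two chain maps have both composites chain-homotopic to the respective identities, then they induce mutually inverse isomorphisms on cohomology. For the composite $j\circ\mathscr{R}$, regarded as an endomorphism of $\Omega^{*}_{p,\,p}(K)$, this is precisely the content of Theorem 4: the operator $\mathscr{A}$ satisfies $\mathscr{R}-\mathrm{Id}_{\Omega^*_{p,\,p}}=d\mathscr{A}+\mathscr{A}d$, whence $j_*\circ\mathscr{R}_*=\mathrm{Id}$ on $H^k(\Omega^{*}_{p,\,p}(K))$. The genuinely new point is the composite $\mathscr{R}\circ j$, an endomorphism of $S\mathscr{L}^{*}_p(K)$. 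Restricting the same homotopy identity to a form $\omega\in S\mathscr{L}^{*}_p(K)$ yields $\mathscr{R}\omega-\omega=d(\mathscr{A}\omega)+\mathscr{A}(d\omega)$; to turn this into an honest chain homotopy inside the subcomplex I must verify that $\mathscr{A}$ carries $S\mathscr{L}^{k}_p(K)$ into $S\mathscr{L}^{k-1}_p(K)$, rather than merely into the ambient space $\Omega^{k-1}_{p,\,p}(K)$.

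I expect this last verification to be the main obstacle. Since $\mathscr{A}\omega=\sum_i\mathcal R_1\dots\mathcal R_{i-1}\mathcal A_i\omega$ is assembled from the local operators $\mathcal A_i=(\varphi_i^{-1})^*\mathcal A_{\varepsilon}\varphi_i^*$ and $\mathcal R_i=(\varphi_i^{-1})^*\mathcal R_{\varepsilon}\varphi_i^*$, the key is that $\mathcal A_{\varepsilon}$ and $\mathcal R_{\varepsilon}$ send smooth forms to smooth forms: this follows from the smoothness of the flow $\mathfrak s_{tv}=hs_vh^{-1}$, of the vector field $\mathfrak X_v=d_{h^{-1}(x)}h(v)$, and from the fact that integration against the fixed mollifier $\tau(v)$ preserves smoothness by differentiation under the integral sign. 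Thus if $\omega$ is locally the bi-Lipschitz image of a smooth form, then $\varphi_i^*\omega$ is smooth, $\mathcal A_{\varepsilon}\varphi_i^*\omega$ is again smooth, and $(\varphi_i^{-1})^*$ returns a form of class $S\mathscr{L}$; the boundedness in the graph norm is supplied by the already-established boundedness of $\mathscr{A}$ on $\Omega^{*}_{p,\,p}(K)$. One must also check that these locally defined operators patch together consistently on the overlaps of the stars $\Sigma_i$, which is handled by the same support-localization argument used in the proofs that $\mathscr{R}$ and $\mathscr{A}$ are bounded. Granting this, $\mathscr{R}\circ j\simeq\mathrm{Id}_{S\mathscr{L}^*_p}$, so $\mathscr{R}_*\circ j_*=\mathrm{Id}$ on $H^k(S\mathscr{L}^{*}_p(K))$.

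Combining the two relations, $\mathscr{R}_*$ and $j_*$ are mutually inverse, which yields the asserted isomorphism $H^k(S\mathscr{L}^{*}_p(K))\cong H^k(\Omega^{*}_{p,\,p}(K))$ in ${\sf Vec}_{\mathbb R}$ and, incidentally, identifies it with the map $\mathscr{R}$ appearing in the commutative triangle stated in the introduction.
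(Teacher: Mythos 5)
Your overall scheme is the right reading of the paper, and in fact goes further than the paper's own proof: the proof given in the text records only the half you call $j_*\circ\mathscr R_*=\mathrm{Id}$ (for closed $\omega\in\Omega^k_{p,\,p}(K)$ the form $\mathscr R\omega$ lies in $S\mathscr L^k_p(K)$ and, by the homotopy of Theorem 4, is cohomologous to $\omega$ in $H^k(\Omega^*_{p,\,p}(K))$, so the inclusion is surjective on cohomology), and it is silent on the composite $\mathscr R_*\circ j_*$. So you have correctly isolated the step that is genuinely missing. The problem is that your proposed verification of it does not work.

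The gap is in the assertion ``if $\omega$ is locally the bi-Lipschitz image of a smooth form, then $\varphi_i^*\omega$ is smooth.'' Membership in $S\mathscr L^k_p(K)$ is witnessed by \emph{some} bi-Lipschitz homeomorphism, not by the fixed chart $\varphi_i$ used to build $\mathcal A_i$; the transition between the witness and $\varphi_i$ is merely bi-Lipschitz, so $\varphi_i^*\omega$ in general has only bounded measurable coefficients (already for $\omega$ smooth on each simplex, e.g.\ a Whitney form, $\varphi_i^*\omega$ is not smooth). This matters because, unlike $\mathcal R_\varepsilon$, the operator $\mathcal A_\varepsilon$ is \emph{not} smoothing: $\mathcal R_\varepsilon\omega=\int\mathfrak s^*_{\varepsilon v}\omega\cdot\tau(v)$ is a genuine convolution-type average and is smooth for any $L_p$ input, whereas the inner integral $\int_0^1\iota_{\mathfrak X_{\varepsilon v}(\mathfrak s_{\varepsilon vt}(x))}(\mathfrak s^*_{\varepsilon vt}\omega)\,dt$ in $\mathcal A_\varepsilon$ includes $t=0$, where $\mathfrak s_{\varepsilon vt}=\mathrm{Id}$ and the $v$-average degenerates to a pointwise operation; so $\mathcal A_\varepsilon$ preserves, but does not improve, regularity. (This asymmetry is visible in the paper's own diagrams: $\mathcal R_\varepsilon$ lands in $\Omega^*_{\rm smooth}(\mathbf B_1)$, resp.\ $\bigwedge^k\Omega_{\mathscr L}(B)$, $S\mathscr L^k_p(K)$, while $\mathcal A_\varepsilon$, $\tilde{\mathcal A}_\varepsilon$ and $\mathscr A$ are only given codomains $\Omega^{k-1}_p$, $\Omega^{k-1}_{p,\,p}$.) Consequently $\mathscr A$ need not carry $S\mathscr L^k_p(K)$ into $S\mathscr L^{k-1}_p(K)$, restricting the ambient homotopy does not yield a homotopy inside the subcomplex, and your conclusion $\mathscr R_*\circ j_*=\mathrm{Id}$ is unsupported. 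Note that the obvious repair for injectivity of $j_*$ — for $\omega\in S\mathscr L^k_p(K)$ closed with $\omega=d\eta$, $\eta\in\Omega^{k-1}_{p,\,p}(K)$, write $\omega=d(\mathscr R\eta-\mathscr A\omega)$ with $\mathscr R\eta\in S\mathscr L^{k-1}_p(K)$ — runs into exactly the same obstruction, since the primitive still involves $\mathscr A\omega$. So a separate argument (or a proof that $\mathscr A$ can be modified to preserve the subcomplex) is required here, both for your proposal and, strictly speaking, for the paper's own two-line proof.
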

\begin{proof}
Let $\omega$ be a form lying in $\Omega^{*}_{p,\,p}({ K})$ such that $\omega \in \ker d$. 
Due to the existence of homotopy between $\mathscr R$ and $\rm Id_{\Omega^{*}_{p,\,p}({ K})}$ 
we can see that $\mathscr R \omega \in S\mathscr{L}^{*}_p(K)$ and $\omega$ belong to the same cohomologycal class in $H^{k} (\Omega^{*}_{p,\,p}({ K}))$ . 
\end{proof}
\section{de Rham theorem}
Let $K$ be an $n$-dimension simplicial complex and $S\mathscr{L}^{*}_p(K)$ be a chain complex of differential forms of  Lipschitz-Sullivan's type on $K$. Put the following
$$
\mathscr I\colon \omega \mapsto f_{\omega}(\sigma) = \int\limits_{\sigma} \omega
$$
In order to prove the lemma below we will introduce an $\mathbb R$-linear map
$$
\mathscr W \colon C^k_p(K) \to S\mathscr{L}^k_p(K). 
$$
Due to Hassler Whitney's works, where such transformation was introduced, see for example \cite{HW}, an object ${\rm im}\, \mathscr W$ is usually called  the set of Whitney's forms.
\begin{lemma}
Let $K$ be a simplicial complex there exists a short strictly exact sequence of the following type in the category of Banach spaces ${\sf Ban}_{\infty}$:
$$
\xymatrix{ 
0\ar[r]&{\rm ker}^k (\mathscr I) \ar[r] &S\mathscr{L}^k_p(K)\ar[r]^{\mathscr I}&C^k_p(K)\ar[r]&0}
$$
In other words, the map $\mathscr I$ is a split epimorphism
$$S\mathscr{L}^k_p(K) \cong C^k_p(K) \oplus {\rm ker}^k( \mathscr I)$$ 
\end{lemma}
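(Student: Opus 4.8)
The plan is to produce an explicit bounded right inverse of $\mathscr{I}$ — the Whitney embedding $\mathscr{W}$ — and to check that $\mathscr{I}$ and $\mathscr{W}$ are both morphisms in $\mathsf{Ban}_{\infty}$; once $\mathscr{I}\circ\mathscr{W}=\mathrm{Id}$ is established with both maps bounded, the split short exact sequence follows by a formal idempotent argument.

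First I would recall the construction of Whitney forms. For a $k$-simplex $\sigma=\langle e_{i_0},\dots,e_{i_k}\rangle$ carrying barycentric coordinates $t_{i_0},\dots,t_{i_k}$, put
$$
W_\sigma = k!\sum_{j=0}^{k}(-1)^{j}\,t_{i_j}\,dt_{i_0}\wedge\cdots\wedge\widehat{dt_{i_j}}\wedge\cdots\wedge dt_{i_k},
$$
and set $\mathscr{W}(c)=\sum_{\sigma\in K[k]}c(\sigma)\,W_\sigma$ for $c\in C^k_p(K)$. Each $t_{i_j}$ is locally Lipschitz and smooth on every simplex, so $W_\sigma\in S\mathscr{L}^k_p(K)$, and $\mathrm{supp}(W_\sigma)$ is contained in the closed star of $\sigma$. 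The classical Whitney computation yields the duality relation $\int_\tau W_\sigma=\delta_{\sigma\tau}$ for $k$-simplices $\sigma,\tau$, whence $\mathscr{I}\circ\mathscr{W}=\mathrm{Id}_{C^k_p(K)}$. This already gives the surjectivity of $\mathscr{I}$ and the purely algebraic splitting; the rest is analytic.

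The substantive step is uniform boundedness. For $\mathscr{I}$, Hölder's inequality on each $k$-simplex gives $|\int_\sigma\omega|\le(\mathrm{mes}\,\sigma)^{1/q}\|\omega\|_{L_p(\sigma)}$ with $q$ conjugate to $p$; since $L=1$ every simplex is isometric to the standard one, so $\mathrm{mes}\,\sigma$ is a fixed constant $c_n$, and summing the $p$-th powers over $\sigma\in K[k]$ yields
$$
\|\mathscr{I}\omega\|_{C^k_p}^p=\sum_{\sigma}\Big|\int_\sigma\omega\Big|^p\le c_n^{p/q}\sum_{\sigma}\|\omega\|_{L_p(\sigma)}^p\le c_n^{p/q}\,\|\omega\|_{S\mathscr{L}^k_p}^p.
$$
For $\mathscr{W}$ the point is that $\|W_\sigma\|_{\Omega_{\infty,\infty}}$ is bounded by a constant depending only on $k$ and the fixed simplex geometry, and that $\mathrm{supp}(W_\sigma)$ meets only the simplices of the star of $\sigma$. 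By star-boundedness at most $C$ of these supports overlap above any point, so on each $n$-simplex the sum defining $\mathscr{W}(c)$ has a uniformly bounded number of nonzero terms, giving $\|\mathscr{W}(c)\|_{\Omega_{p,p}}^p\le C'\sum_\sigma|c(\sigma)|^p=C'\|c\|_{C^k_p}^p$. I expect this uniform control of $\mathscr{W}$ — simultaneously bounding the pointwise norms of the individual Whitney forms and the finiteness of their overlaps — to be the main obstacle, and it is precisely here that star-boundedness and the edge-length normalization $L=1$ are indispensable.

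Finally I would assemble the splitting. Since $\mathscr{I}\circ\mathscr{W}=\mathrm{Id}_{C^k_p}$ with both maps bounded, $P=\mathscr{W}\circ\mathscr{I}$ is a bounded idempotent on $S\mathscr{L}^k_p(K)$ whose image is a closed subspace isomorphic to $C^k_p(K)$; the complementary idempotent $\mathrm{Id}-P$ is a bounded projection onto $\ker\mathscr{I}=\mathrm{ker}^k(\mathscr{I})$, which is therefore closed. This produces the topological direct sum
$$
S\mathscr{L}^k_p(K)\cong C^k_p(K)\oplus\mathrm{ker}^k(\mathscr{I}),
$$
with $\mathscr{W}$ the section and $\mathrm{Id}-\mathscr{W}\mathscr{I}$ the retraction onto the kernel, and hence the stated strictly exact sequence in $\mathsf{Ban}_{\infty}$.
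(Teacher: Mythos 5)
Your proposal is correct and takes essentially the same route as the paper's proof: H\"older's inequality for the boundedness of $\mathscr I$, the Whitney forms together with star-boundedness (finitely overlapping supports, uniformly bounded $\Omega_{\infty,\infty}$-norms) for the boundedness of $\mathscr W$ on compactly supported cochains and hence on the closure $C^k_p(K)$, and the bounded idempotent $\mathscr W\mathscr I$ producing the topological splitting. The one small deviation is normalization: since the paper integrates over \emph{metric} simplices isometric to the standard Euclidean simplex, the duality relation is $\int_\sigma W_\sigma = \frac{\sqrt{k+1}}{\sqrt{2^k}}$ rather than $\delta_{\sigma\tau}$'s value $1$ (which holds in barycentric coordinates), so the paper's section is the rescaled $\tilde{\mathscr W} = \frac{\sqrt{2^k}}{\sqrt{k+1}}\,\mathscr W$ --- a harmless constant that does not affect your argument.
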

\begin{proof}
First of all, we should confirm that $\mathscr I$ is in fact a morphism between $L_p$-spaces. Indeed, 
\begin{align*}
\|\mathscr I\omega\|_{C^k_p(K)}^p = &\sum\limits_{\tau \in K[k]} \left| \int\limits_{\tau}\omega\right|^p \le
 \sum\limits_{\tau \in K[k]}  \left(\int\limits_{\tau}|\omega|\right)^p \\
\le\{\rm{mes}\,\tau\}^{p-1}  \bigg( &\sum\limits_{\tau\in K[k]} \int\limits_{\tau}|\omega|^p\bigg)
\le \left( \frac{\sqrt{k+1}}{k!\sqrt{2^{k}}} \right)^{p-1} \|\omega\|^p_{S\mathscr{L}^k_p(K)}
\end{align*}

Assume $c \in C^k_p(K)$, let us represent $c$ as a sum in the Banach space which converges absolutely  
$$
c = \sum_{\sigma \in K([k])} c(\sigma)\chi_{\sigma},~\text{where}~\chi_{\sigma}(\sigma') = \begin{cases} 1,~\text{if}~\sigma' = \sigma;\\0,~\text{otherwise},\end{cases}
$$
that is, $\sum_{\sigma \in K([k])} |c(\sigma)|^p < \infty$.  
In other words we can define $C^k_p(K)$ as a closure of $\mathbb R$-vector space $C_c^k(K)$ generated by elements $\{ \chi_\sigma\}_{\sigma \in K([k])}$, 
or equivalently   a closure of $\mathbb R$-vector space of compactly supported cochains. 

Define ${\mathscr W}$ as the following. Firstly, let us write it down for elements of basis
$$
{\mathscr W}(\chi_{\sigma}) = k!\sum_{i=0}^k (-1)^i t_i dt_0\wedge\dots\wedge dt_{i-1}\wedge dt_{i+1}\wedge \dots \wedge dt_{k},
$$
where $\{t_i\}$ is a set of barycentric coordinates assigned to vertices of the simplex $\sigma$.    
The ${\rm supp}({\mathscr W}(\chi_{\sigma}))$ contains only one simplex  belonging $k$-skeleton of K, namely $\sigma$. 
It implies that any sum ${\mathscr W}(\chi_{\sigma})+{\mathscr W}(\chi_{\sigma'})$ can be uniquely restricted to every $k$-simplex.
Then we can extend our map to the space $C_c^k(K)$ by linearity. It is clear that  ${\mathscr W} (C_c^k(K))$ is a set of  compactly supported differential forms, and moreover, the support of 
every form ${\mathscr W}(\chi_{\sigma})$
contains at most $N$ simplexes due to star-boundedness of the complex.  
Let us check that ${\mathscr W}\colon C^k_c(K) \to S\mathscr{L}^k_p(K)$ is a bounded map.  Assume that 
$$
c = \sum_{i = 1}^{m} c(\tau_i) \chi_{\tau_i}
$$
then 
$$
\|{\mathscr W}(c)\|^p_{\Omega_{\infty,\,\infty}(\sigma)} \le \binom{n+1}{k+1} ^{p-1}\sum_{i = 1}^{m} |c(\tau_i)|^p \|{\mathscr W}(\chi_{\tau_i})\|^p_{\Omega_{\infty,\,\infty}(\sigma)}.
$$
As mentioned above, there is only a finite number $q$ of simplexes belonging the support $\bigcup_{i  = 1}^m {\rm supp} ({\mathscr W}(\chi_{\tau_i}))$ of  ${\mathscr W}(c)$.
\begin{align*}
&\|{\mathscr W}(c)\|^p_{SL_{p}(K)} = \sum_{j = 1}^{q}\|{\mathscr W}(c)\|^p_{\Omega_{\infty,\,\infty}(\sigma_j)}\\
&\le  \binom{n+1}{k+1} ^{p-1}\sum_{j = 1}^{q} \sum_{i = 1}^{m} |c(\tau_i)|^p \|{\mathscr W}(\chi_{\tau_i})\|^p_{\Omega_{\infty,\,\infty}(\sigma_j)}
\end{align*}
It is not hard to see that there is a constant $C$ such that
$$
\|{\mathscr W}(\chi_{\tau_i})\|^p_{\Omega_{\infty,\, \infty}(\sigma_j)} \le C,~\text{if $\tau_i$ is a $k$-face of $\sigma_j$},
$$
and
$$
\|{\mathscr W}(\chi_{\tau_i})\|^p_{\Omega_{\infty,\,\infty}(\sigma_j)}  = 0,~\text{otherwise}.
$$
As a result we have
\begin{align*}
\sum_{j = 1}^{q} \sum_{i = 1}^{m} |c(\tau_i)|^p \|{\mathscr W}(\chi_{\tau_i})\|^p_{\Omega_{\infty,\,\infty}(\sigma_j)}
&=\sum_{i = 1}^{m} |c(\tau_i)|^p  \sum_{j = 1}^{q} \|{\mathscr W}(\chi_{\tau_i})\|^p_{\Omega_{\infty,\,\infty}(\sigma_j)}\\
\le &NC \sum_{i = 1}^{m} |c(\tau_i)|^p.
\end{align*}
Hence the map 
$$
{\mathscr W}\colon C^k_c(K) \to S\mathscr{L}^k_p(K)
$$
enjoys the following property 
$$
\|{\mathscr W}(c)\|^p_{S\mathscr{L}_p(K)} \le NC  \binom{n+1}{k+1} ^{p-1} \|c\|_{C^k_p(K)}
$$
Now we can extend ${\mathscr W}$ to the closure $\overline{(C^k_c(K))}_{\|\|_{C^k_p}} = C^k_p(K)$ by 

$$
{\mathscr W}(c) = \sum_{\sigma \in K([k])} c(\sigma){\mathscr W}(\chi_{\sigma}) = \lim_{i\to \infty} {\mathscr W}(c_i),~\text{as}~c_i \xrightarrow[i\to \infty]{}c,~\{c_i\} \subset C^k_c(K),
$$
and  it follows that
$$
{\mathscr W} \colon C^k_p(K) \to S\mathscr{L}^k_p(K).
$$
It remains to verify that  $\mathscr I$ is a retraction. First, let $\sigma \in K[k]$  it is clear that 
${{\rm supp} ({\mathscr W }(\chi_\sigma))\cap K[k] = \{\sigma\}}$ and so
$$
 \langle \mathscr I \circ {\mathscr W}(\chi_\sigma) \rangle (\sigma')= 0,~\text{if}~\sigma' \ne \sigma. 
$$
Let 
$$
{\mathscr W}(\chi_{\sigma}) = k!\sum_{i=0}^k (-1)^i t_i dt_0\wedge\dots\wedge dt_{i-1}\wedge dt_{i+1}\wedge \dots \wedge dt_{k},
$$
so we have $t_0 = 1- \sum_{i=1}^k  t_i$ inside $\sigma$. As a result we can write 
\begin{align*}
\frac{1}{k!} &\int_{\sigma} {\mathscr W}(\chi_{\sigma}) = \int_{\sigma}(1- \sum_{i=1}^k  t_i) dt_1\wedge\dots \wedge dt_{k}  \\
+&\int_{\sigma}\sum_{i=1}^k (-1)^i t_i d (1- \sum_{j=1}^k  t_j) \wedge\dots\wedge dt_{i-1}\wedge dt_{i+1}\wedge \dots \wedge dt_{k} \\
=  &\int_{\sigma}(1- \sum_{i=1}^k  t_i) dt_1\wedge\dots \wedge dt_{k}\\
+ &\int_{\sigma}\sum_{i=1}^k (-1)^{i+1} t_i d (\sum_{j=1}^k  t_j) \wedge\dots\wedge dt_{i-1}\wedge dt_{i+1}\wedge \dots \wedge dt_{k} \\
=  &\int_{\sigma}(1- \sum_{i=1}^k  t_i) dt_1\wedge\dots \wedge dt_{k}\\
+&\int_{\sigma}\sum_{i=1}^k (-1)^{i+1} t_i dt_i \wedge\dots\wedge dt_{i-1}\wedge dt_{i+1}\wedge \dots \wedge dt_{k} \\
=&\int_{\sigma}dt_1\wedge\dots \wedge dt_{k} =  \frac{\sqrt{k+1}}{k! \sqrt{2^{k}}}
\end{align*}
since 
$$dt_i \wedge\dots\wedge dt_{i-1}\wedge dt_{i+1}\wedge \dots \wedge dt_{k} = (-1)^{i-1}dt_1 \wedge\dots\wedge dt_{i-1}\wedge dt_i \wedge dt_{i+1}\wedge \dots \wedge dt_{k}.$$
Hence we have
$$
 \mathscr I \circ {\mathscr W}(\chi_\sigma) = \frac{\sqrt{k+1}}{\sqrt{2^{k}}} \chi_\sigma
$$
and so for $c \in C^k_p(K)$ the following holds
$$
 \mathscr I \circ {\mathscr W}(c) = \frac{\sqrt{k+1}}{\sqrt{2^{k}}} c.
$$
Now we can denote 
$$
\tilde {\mathscr W} = \frac{\sqrt{2^{k}}}{\sqrt{k+1}} {\mathscr W}
$$
that implies that the following diagram commutes
$$
\xymatrix{C^k_p(K)\ar[dr]_{{\rm Id}_{C^k_p(K)}}\ar[r]^{\tilde{\mathscr W}}&S\mathscr{L}^k_p(K)\ar[d]^{\mathscr I}\\
&C^k_p(K)}
$$
and $\mathscr I$ is a retraction of morphism $\tilde{\mathscr W}$.
\end{proof}
\begin{lemma}
In the following diagram in ${\sf Vec}_{\mathbb R}$
$$
\xymatrix{
{\rm im}(d^{k})\arrow[r] \arrow@/{}^{2pc}/[rr]^0&S\mathscr{L}^{k+1}_p(K)\arrow[r]^{d^{k+1}}& S\mathscr{L}^{k+2}_p(K)\\
&{\rm ker}(\mathscr I)\arrow[u]\arrow@{-->}[lu].&
}
$$
the map $\ker(\mathscr I)$ factors through  ${\rm im}(d^k)$.
\end{lemma}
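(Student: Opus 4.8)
The plan is to read the dashed arrow as the statement that every closed form in $\ker(\mathscr I)$ is exact. The curved arrow labelled $0$ is simply $d^{k+1}\circ d^{k}=0$, so ${\rm im}(d^{k})\subseteq\ker(d^{k+1})$; saying that the inclusion of a closed $\omega\in\ker^{k+1}(\mathscr I)$ into $S\mathscr L^{k+1}_{p}(K)$ factors through ${\rm im}(d^{k})$ is exactly saying that such an $\omega$ is exact. First I would record that $\mathscr I$ is a chain map: by Stokes' theorem $\int_{\sigma}d\omega=\int_{\partial\sigma}\omega$, hence $\mathscr I d=\delta\mathscr I$ for the simplicial coboundary $\delta$, so that $\ker(\mathscr I)$ is a genuine subcomplex of $S\mathscr L^{*}_{p}(K)$ and the top row of the diagram is a two-term piece of it.

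The core step is to produce, for a closed $\omega$ with $\mathscr I\omega=0$, a primitive $\eta\in S\mathscr L^{k}_{p}(K)$ with $d\eta=\omega$. I would build it by a local-to-global argument using that every simplex is contractible. On each simplex the Poincar\'e lemma obtained above from Cartan's formula (equivalently, the homotopy operators $Q_{v}$ and $\mathcal A_{\varepsilon}$) furnishes a local primitive, and the vanishing periods $\int_{\tau}\omega=0$ over every $(k+1)$-simplex $\tau$ are exactly what force the discrepancies of these local primitives along shared faces to be exact, so that they can be reconciled. I would then assemble them into a single global $\eta$ by means of the extension lemmas proved above — extension from a skeleton and from $\partial\Delta$ to $\Delta$ with control of the $\Omega_{p,\,p}$-norm — carrying the uniform per-star estimates along. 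Star-boundedness together with $L=1$ guarantees that these per-star bounds sum to a finite global bound, so that $\eta$ lands in the Banach space $S\mathscr L^{k}_{p}(K)$ rather than merely in forms on individual simplices.

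It then remains to place the primitive inside $\ker(\mathscr I)$, so that the factorization lands where the diagram asks. Given any $\eta$ with $d\eta=\omega$, set $\eta'=\eta-\tilde{\mathscr W}\mathscr I\eta$. Since $\mathscr I\tilde{\mathscr W}={\rm Id}$ by the preceding splitting lemma, $\mathscr I\eta'=0$, so $\eta'\in\ker^{k}(\mathscr I)$; and since $\mathscr I$ and the Whitney map $\tilde{\mathscr W}$ are chain maps, $d\eta'=\omega-\tilde{\mathscr W}\delta\mathscr I\eta=\omega-\tilde{\mathscr W}\mathscr I\omega=\omega$, using $\delta\mathscr I\eta=\mathscr I d\eta=\mathscr I\omega=0$. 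Thus $\omega=d\eta'$ with $\eta'\in\ker^{k}(\mathscr I)$, which is precisely the desired factorization of the inclusion through ${\rm im}(d^{k})$.

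The hard part will be the middle step: the compatible choice of local primitives together with uniform $L_{p}$ control during the global assembly. Contractibility makes the existence of local primitives routine and the vanishing periods remove the cohomological obstruction to gluing; the genuine work is the estimate-bookkeeping that, thanks to bounded geometry, keeps the patched primitive inside $S\mathscr L^{k}_{p}(K)$. An equivalent and perhaps cleaner route to the same step is to construct once and for all a bounded chain homotopy $H$ with ${\rm Id}_{S\mathscr L^{*}_{p}}-\tilde{\mathscr W}\mathscr I=dH+Hd$; then for closed $\omega\in\ker(\mathscr I)$ one reads off $\omega=dH\omega$ at once, and the whole difficulty is concentrated in building such an $H$ from the local contractions with bounded-geometry estimates.
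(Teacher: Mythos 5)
Your outer reductions are sound and essentially match the paper's framing: the dashed arrow does mean ``every closed form in $\ker(\mathscr I)$ is exact,'' Stokes' theorem does make $\mathscr I$ a chain map, and your adjustment $\eta' = \eta - \tilde{\mathscr W}\mathscr I\eta$ is a correct way to land the primitive back in $\ker(\mathscr I)$ (with one caveat: $\tilde{\mathscr W}$ is \emph{not} literally a chain map, since its normalizing constant $\sqrt{2^k}/\sqrt{k+1}$ depends on the degree; your computation survives only because the offending term is killed by $\delta\mathscr I\eta = \mathscr I\omega = 0$). But the middle step, which you yourself flag as ``the hard part,'' is the entire content of the paper's proof, and it is left here as a plan rather than an argument. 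Worse, the mechanism you sketch --- local primitives on individual simplices whose ``discrepancies along shared faces'' are reconciled using the vanishing periods --- is not a step of a proof as it stands: patching arbitrary local primitives leads to an iterated \v{C}ech-type obstruction problem (differences of primitives on faces are closed, need their own primitives on faces of faces, and so on), each layer requiring its own norm control, and the period hypothesis does not directly ``force the discrepancies to be exact.''

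The paper dissolves the gluing problem instead of solving it, by working with \emph{relative} primitives and inducting up the skeleta. For $\omega \in \ker(\mathscr I)$ a $k$-form, its restriction to each $k$-simplex $\sigma$ is a top-degree (hence relative) cocycle in $S\mathscr{L}^*_p(\sigma,\partial\sigma)$, and $H^k(S\mathscr{L}^*_p(\sigma,\partial\sigma)) \cong \mathbb R$ is detected exactly by the period $\int_\sigma$; so the hypothesis $\mathscr I\omega = 0$ yields a primitive $\eta_\sigma$ \emph{vanishing on} $\partial\sigma$. Since all local primitives vanish on boundaries, they agree (both being zero) on shared faces --- no reconciliation is ever needed. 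The uniform bound comes not from bookkeeping but from the Banach inverse operator theorem applied to the epimorphism $d\colon S\mathscr{L}^{k-1}_p(\sigma,\partial\sigma) \to \mathrm{im}(d^{k-1})$ on the single model simplex (all simplices are isometric to the standard one since $L=1$), with star-boundedness making the $\ell_p$-sum over simplices finite. One then extends $\eta$ to the $(k{+}1)$-skeleton by the extension operator $s$ of the earlier lemma, replaces $\omega$ by $\omega' = \mathrm{res}\,\omega - d(s\gamma\omega)$, which is a relative cocycle on each $(k{+}1)$-simplex and now exact for free because $H^k(S\mathscr{L}^*_p(\sigma^{k+1},\partial\sigma^{k+1})) = 0$, and iterates up to the top dimension: the period condition is used exactly once, at the bottom level, and all higher levels are cohomologically unobstructed. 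Your closing alternative --- a global bounded homotopy $H$ with $\mathrm{Id} - \tilde{\mathscr W}\mathscr I = dH + Hd$ --- is stronger than what is needed and is not what the paper constructs; the finite composition of the bounded operators $s\gamma, s\gamma^1,\dots$ is only a bounded right inverse to $d$ on closed forms of $\ker(\mathscr I)$, which suffices for $H^k(\ker^*\mathscr I) = 0$ and hence for the theorem that follows.
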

\begin{proof}
Assume that $\omega \in {\rm ker}(\mathscr I)$ is $k$-form. 
Let $\sigma$ be a $k$-dimensional simplex. Consider a complex
$$
\xymatrix{0\arrow[r]&S\mathscr{L}_p^0(\sigma,\partial \sigma)\arrow[r]^-d&\dots\arrow[r]^-d&S\mathscr{L}_p^{k-1}(\sigma,\partial \sigma)\arrow[r]^-d&S\mathscr{L}_p^{k}(\sigma,\partial \sigma)\arrow[r]&0}
$$
It is known that
$$
H^i(S\mathscr{L}^*_p(\sigma,\,\partial \sigma)) = \begin{cases}
\mathbb{R} \text{~for~} i=k,\\
0 \text{~for~} i < k.
\end{cases}
$$
in particular, we can say that the cohomology group $H^k(\Omega^*_p(\sigma,\,\partial \sigma))$ consists of elements of the following type $[c\omega]$ where $\omega$ is a cocycle such that 
$\int_{\sigma}\omega \ne 0$  and $c \in \mathbb R$. To put it otherwise, every closed form $\theta$ can be presented as below 
$$
\theta  = c \omega + d\eta,~ c = \frac {\int_{\sigma}\theta}{\int_{\sigma} \omega}.
$$ 
Hence  we can say that  ${\rm im}(d^{k-1})$ is precisely a set  of forms which have zero integral over $\sigma$. 
Since those forms constitute a closed set, so do elements of ${\rm im}(d^{k-1})$. Thereby  the following map
$$
d \colon S\mathscr{L}^{k-1}_p(\sigma,\,\partial\sigma) \to {\rm im}(d^{k-1})
$$
is an epimorphism in the category  ${\sf Ban}_{\infty}$
and  by the  Banach inverse operator theorem
there exists a constant $C$
such that for $d \colon \eta \mapsto {\rm res_{\sigma,\,K}}\omega$ we have the estimation  $\|\eta\|_{\Omega_{\infty}}\le C\|{\rm res_{\sigma,\,K}}\omega\|_{\Omega_{\infty}}$.

To summarize, for every $\sigma \in K[k]$ there exists $\eta_{\sigma}\in S\mathscr{L}^{k-1}_p(\sigma,\,\partial \sigma)$ such that ${\rm res}_{\sigma,\, K}\omega = d \eta_\sigma$.
Moreover, for the set of forms $\{\eta_\sigma\}_{\sigma \in K[k]}$ holds \linebreak
${\rm res}_{\sigma' \cap \sigma,\, \sigma}\eta_\sigma = {\rm res}_{\sigma' \cap \sigma,\, \sigma'}\eta_{\sigma'} = 0$ for any couple 
of simplexes $\sigma,~\sigma' \in K[k]$.  As a result we have a ${\rm res}_{K[k],\, K}\omega = d \eta$  where ${\rm res}_{\sigma,\, K[k]} \eta = \eta_\sigma$   and
$$
 \|\eta\|^p_{S\mathscr{L}^*_p(K[k])} = \sum_{\sigma \in K[k]}\|\eta_\sigma\|^p_{\Omega^*_{\infty}(\sigma)}
 \le  C\sum_{\sigma \in K[k]}\| {\rm res}_{\sigma,\, K}\omega \|^p_{\Omega^*_{\infty}(\sigma)} \le C\|\omega\|^p_{S\mathscr{L}^k_p(K)}.
 $$
Then we have a bounded map 
$$
\gamma \colon S\mathscr{L}^k_p(K) \to S\mathscr{L}^{k-1}_p(K[k],K[k-1]) 
$$
It is known that for every simplex $\delta \in K[{k+1}]$ and every form $\alpha \in S\mathscr{L}_{p}^k(\partial \delta)$ 
there is a morphism of normed spaces $s \colon S\mathscr{L}_{p}^k(\partial \delta)\to S\mathscr{L}_{p}^k(\delta)$ continuing forms off the boundary of simplex to its interior.

In the light of previous steps, it was in fact established that 
$$
s\gamma \colon S\mathscr{L}^k_p(K) \to S\mathscr{L}^{k-1}_p(K[k+1]) 
$$
is a bounded map.
Let us look at 
$
\omega'  =   {\rm res}_{K[{k+1}],\, K}\omega - d(s\gamma \omega).
$
It is clear that $\omega'$ is a closed form, as well as $\omega$, and $\omega' \in S\mathscr{L}^{k}_{p}(K[{k+1}],\,K[k])$. The restriction of $\omega'$ to every simplex $\sigma \in K[{k+1}]$ is an exact form
since $H^{k}(S\mathscr{L}^*_{p}(\sigma,\,\partial \sigma))  = 0$.  In other words ${\rm im}(d^k) = {\rm ker}(d^{k+1})$
and 
$$
d \colon S\mathscr{L}^{k-1}_p(\sigma,\,\partial\sigma) \to {\rm Im}(d^{k-1})
$$
is an epimorphism in the category  ${\sf Ban}_{\infty}$.
Using  the Banach inverse operator theorem and the above argument
we can see that there exists 
a bounded map 
$$
\gamma^1 \colon S\mathscr{L}^k_p(K) \to S\mathscr{L}^{k-1}_p(K[k+1],K[k]) 
$$
Then we can take
$$
s\gamma^1 \colon S\mathscr{L}^k_p(K) \to S\mathscr{L}^{k-1}_p(K[k+2]) 
$$
and so on. In essence, repeating this construction for every dimension as a result we obtain a finite composition of bounded operators which can be presented as follows
$$
 {\rm ker}(\mathscr I) \to S\mathscr{L}^{k-1}_p(K).
$$
And moreover, the map $\ker(\mathscr I)$ factors through  ${\rm im}(d^k)$.
\end{proof}
\begin{theorem}
Assume $K$ is a simplicial complex of bounded geometry then the following cohomology groups are isomorphic in the category ${\sf Vec}_{\mathbb R}$:
$$
H^k(C^*_p(K)) \cong H^k(S\mathscr{L}^*_p(K)).
$$
\end{theorem}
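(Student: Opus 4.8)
The plan is to realize the two preceding lemmas as the two halves of one short exact sequence of cochain complexes and then read the claim off the associated long exact sequence in cohomology. First I would record that the integration map $\mathscr I$ is a morphism of complexes: by Stokes' theorem, for $\omega\in S\mathscr L^{k}_p(K)$ and $\sigma\in K[k+1]$,
$$
\{\mathscr I\,d\omega\}(\sigma)=\int_{\sigma}d\omega=\int_{\partial\sigma}\omega=\{\delta\,\mathscr I\omega\}(\sigma),
$$
so $\mathscr I d=\delta\mathscr I$, where $\delta$ is the simplicial coboundary on $C^{*}_p(K)$. Consequently $\ker(\mathscr I)$ is stable under $d$ and is a subcomplex, and the split epimorphism furnished by the first of the two lemmas assembles, degreewise, into a short exact sequence of cochain complexes
$$
\xymatrix{0\ar[r]&\ker(\mathscr I)\ar[r]&S\mathscr L^{*}_p(K)\ar[r]^-{\mathscr I}&C^{*}_p(K)\ar[r]&0}
$$
that is split in each degree in $\mathsf{Ban}_{\infty}$ (though the section need not be a chain map, which is irrelevant for what follows).

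The heart of the argument is that the subcomplex $\ker(\mathscr I)$ is acyclic, and this is exactly the content of the factorization lemma. Given a closed $\omega\in\ker^{k}(\mathscr I)$, the construction there produces a primitive $\zeta\in S\mathscr L^{k-1}_p(K)$ with $d\zeta=\omega$, assembled skeleton by skeleton and terminating because $\dim K=n<\infty$. I would then observe that $\zeta$ automatically lies in $\ker(\mathscr I)$: it is a sum of relative forms, the first lying in $S\mathscr L^{k-1}_p(K[k],K[k-1])$ and each later summand vanishing already on $K[k]$, so every summand restricts to zero on the $(k-1)$-skeleton $K[k-1]$; since each $(k-1)$-simplex $\tau$ lies in $K[k-1]$, this gives $\int_{\tau}\zeta=0$, hence $\mathscr I\zeta=0$. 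Thus each cocycle of $\ker(\mathscr I)$ is a coboundary inside $\ker(\mathscr I)$, i.e. $H^{*}(\ker(\mathscr I))=0$.

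With the short exact sequence and the vanishing of $H^{*}(\ker(\mathscr I))$ in hand, the long exact cohomology sequence
$$
\cdots\to H^{k}(\ker(\mathscr I))\to H^{k}(S\mathscr L^{*}_p(K))\xrightarrow{\ \mathscr I_{*}\ }H^{k}(C^{*}_p(K))\to H^{k+1}(\ker(\mathscr I))\to\cdots
$$
has zero outer terms, so $\mathscr I_{*}$ is an isomorphism in $\mathsf{Vec}_{\mathbb R}$, which is the assertion. The same conclusion can be reached by hand using the normalized Whitney section $\tilde{\mathscr W}$ of the first lemma, for which $\mathscr I\tilde{\mathscr W}=\mathrm{Id}$: for surjectivity of $\mathscr I_{*}$, a cocycle $c$ yields $d\tilde{\mathscr W}c\in\ker^{k+1}(\mathscr I)$ closed, hence $d\tilde{\mathscr W}c=d\xi$ with $\xi\in\ker^{k}(\mathscr I)$ by acyclicity, and $\tilde{\mathscr W}c-\xi$ is then closed with $\mathscr I(\tilde{\mathscr W}c-\xi)=c$; for injectivity, if a closed $\omega$ satisfies $\mathscr I\omega=\delta b$, then $\omega-d\tilde{\mathscr W}b\in\ker^{k}(\mathscr I)$ is closed, hence exact in $\ker(\mathscr I)$, so $\omega$ is exact. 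The single delicate point is the one isolated above, namely that the primitives produced by the factorization lemma genuinely respect the period constraints defining $\ker(\mathscr I)$ (equivalently, that the relative extension operators do not spoil vanishing on the lower skeleton); granting that, the statement is a formal consequence of $\mathscr I$ being a chain-level split epimorphism with acyclic kernel.
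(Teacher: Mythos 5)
Your proposal is correct and takes essentially the same route as the paper: both arguments rest on the split epimorphism $S\mathscr{L}^k_p(K)\cong C^k_p(K)\oplus\ker^k(\mathscr I)$ from the Whitney-form lemma together with the acyclicity of $\ker(\mathscr I)$ supplied by the factorization lemma, the paper concluding in one line via $H^k(C^*_p(K)\oplus\ker^*\mathscr I)\cong H^k(C^*_p(K))\oplus H^k(\ker^*\mathscr I)\cong H^k(C^*_p(K))$. Your long-exact-sequence packaging is a cosmetic variant of this (with the small advantage that it does not need the section to be a chain map), and your explicit verification that the skeleton-by-skeleton primitives vanish on $K[k-1]$ and hence lie in $\ker(\mathscr I)$ makes precise a point the paper leaves implicit.
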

\begin{proof}
\begin{align*}
H^k(S\mathscr{L}^*_p(K)) &\cong H^k(C^*_p(K) \oplus \ker^* \mathscr I) \\
\cong  H^k(C^*_p(K)) \oplus& H^k(\ker ^*\mathscr I) \cong H^k(C^*_p(K))
\end{align*}
\end{proof}
\begin{appendices}
\section{Appendix}

It is not hard to see that obtained results hold for  metric simplicial complexes of bounded geometry  with arbitrary $L\ge 1$. 
We should notice that such complexes emerge naturally as a triangulation of Riemannian  manifolds with bounded geometry (see, for example, \cite{OA}, \cite{DK}):
\begin{theorem}
Let $M$ be an $n$-dimenthional Riemannian manifold of bounded geometry with geometric bounds $a$, $b$, $\epsilon$. Then 
$M$ admits a triangulation $K$ of bounded geometry (whose geometric bounds depend on $n$, $a$, $b$, $\epsilon$) and an $L$-bi-Lipcshitz homeomorphism $f\colon K \to M$,
where $L= L(n,\,a,\,b,\,\epsilon)$.
\end{theorem}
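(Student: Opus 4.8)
The plan is to build the triangulation from a uniformly distributed net of points and to transport the combinatorics of a Euclidean triangulation through normal coordinate charts, keeping quantitative control at every stage so that all constants depend only on $n$ and the geometric bounds $a$, $b$, $\epsilon$. The starting point is the standard consequence of bounded geometry: the lower bound $\epsilon$ on the injectivity radius guarantees that at every $x \in M$ the exponential map $\exp_x$ is a diffeomorphism of the Euclidean ball $B(0, r_0) \subset T_x M$ onto $B(x, r_0)$ for a fixed $r_0 = r_0(\epsilon)$, while the curvature bounds $a$, $b$ force the pulled-back metric $\exp_x^* g$ to be uniformly bi-Lipschitz to the flat metric on $B(0, r_0)$, with bi-Lipschitz constant depending only on $a$, $b$, $r_0$ (Rauch comparison). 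Bishop--Gromov volume comparison then yields two-sided bounds on $\mathrm{vol}\, B(x, r)$ for $r \le r_0$ that are uniform over $M$.

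First I would fix a maximal $\delta$-separated subset $\{x_i\} \subset M$ with $\delta$ a suitable fraction of $r_0$. Maximality makes $\{B(x_i, \delta)\}$ a cover, while $\delta$-separation together with the volume bounds makes the cover uniformly locally finite: the number of indices $j$ with $B(x_j, 2\delta) \cap B(x_i, 2\delta) \ne \varnothing$ is bounded by a constant $N = N(n, a, b, \epsilon)$. This multiplicity bound is exactly the combinatorial input that will later produce star-boundedness of $K$. Next I would promote the net to an honest simplicial complex: working inside the charts $\exp_{x_i}$, where the geometry is Euclidean up to uniform bi-Lipschitz distortion, I would run a Delaunay/Voronoi-type construction (or, equivalently, invoke the quantitative smooth-triangulation machinery of Whitehead--Munkres as sharpened for bounded geometry in \cite{OA}, \cite{DK}) to obtain an abstract complex $K$ on the vertex set $\{x_i\}$ together with a homeomorphism $f \colon |K| \to M$ that is smooth and nondegenerate on each closed simplex.

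The essential quantitative requirement is \emph{uniform fatness}: the inradius-to-circumradius ratio (equivalently the smallest dihedral angle) of every simplex of $f(|K|)$ must be bounded below by a constant depending only on $n$, $a$, $b$, $\epsilon$. Granting this, each $f|_\sigma$ and its inverse have Lipschitz constants controlled solely by the fatness bound and the chart distortion, so after rescaling all edge lengths into $[L^{-1}, L]$ the complex $K$ inherits bounded geometry in the sense of the paper: edge lengths are uniformly comparable and the vertex degree is bounded by $N$ from the previous step. Assembling the per-simplex bi-Lipschitz constants over the whole complex --- using the uniform finiteness of the cover so that the path-metrics on $|K|$ and on $M$ are built from uniformly comparable local pieces --- then yields a single global bi-Lipschitz constant $L = L(n, a, b, \epsilon)$, which is the assertion.

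The hard part will be the gluing: the local triangulations produced in the overlapping charts $\exp_{x_i}$ must be reconciled into a single globally consistent complex \emph{without} degrading the uniform fatness bound. Because distinct charts differ by transition maps that are themselves only uniformly bi-Lipschitz rather than isometric, a naive union of local Delaunay complexes will generically create slivers near chart boundaries. The standard remedy is a skeleton-by-skeleton perturbation of the vertices and of the straightening map $f$, carried out in order of increasing dimension, with each perturbation small relative to $\delta$ so that fatness is lost by at most a controlled factor at each of the $n$ stages; since $n$ is fixed this degrades the constants only by a factor depending on $n$. This perturbation-with-uniform-estimates step is precisely the technical content supplied by \cite{OA} and \cite{DK}, and it is the only place where genuine work beyond comparison geometry is required.
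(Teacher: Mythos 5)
The paper does not prove this theorem at all: it is quoted as background in the appendix, with the proof outsourced entirely to Attie \cite{OA} and Dru\c{t}u--Kapovich \cite{DK}. Your outline is a faithful summary of the standard argument in those sources (separated net, uniformly bi-Lipschitz exponential charts, multiplicity bound, Delaunay-type local triangulation, skeleton-wise perturbation to preserve fatness), and since you explicitly delegate the one genuinely hard step --- sliver-free gluing with uniform estimates --- to the very references the paper itself cites, your proposal is correct and takes essentially the same route as the paper.
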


Also we can generalize our results in the following way. 
\begin{definition}
We will call an $n$-bouquet of $m$ balls  a  subset of $\mathbb R^{n+1}$ obtained as a union of $m$ $n$-balls of radius $1$ with a common centre.
\end{definition}
\begin{lemma}
Let $R$ be an $n$-bouquet.   
Then any $ \omega \in \Omega^k_{p,\,p}(R)$ can be extended to an $n+1$-ball $B_1$ of radius $1$ in such a way that
$ \tilde{\omega} \in \Omega_{p,\,p}(B_1)$ and $\|\tilde{\omega}\|_{\Omega^{*}_{p,\,p}(B_1)} \le \|\omega\|_{\Omega^{*}_{p,\,p}(R)}$. 
\end{lemma}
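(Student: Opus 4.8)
The plan is to reduce the statement to the transverse cut‑off construction already used to extend a Lipschitz form across a simplex, applied one petal at a time, and then to assemble the pieces over the whole $(n+1)$‑ball. Place the common centre of the bouquet at the origin and write $R=\bigcup_{j=1}^m D_j$, where each $D_j$ is an $n$‑disk of radius $1$ spanning an $n$‑plane $P_j$ through $0$. The point is that the petals meet only in $\{0\}$, a set of measure zero, so the datum $\omega\in\Omega^{k}_{p,\,p}(R)$ is nothing but the $m$‑tuple $(\omega_j)_j$ with $\omega_j={\rm res}_{D_j,\,R}\,\omega$, and $\|\omega\|^p_{\Omega^{*}_{p,\,p}(R)}=\sum_j\|\omega_j\|^p_{\Omega^{*}_{p,\,p}(D_j)}$.

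First I would treat a single petal. For $D_j$ choose a bi‑Lipschitz tubular model $D_j\times[-\delta,\delta]\hookrightarrow B_1$ of a neighbourhood of $D_j$ in the ball, with the slice $D_j\times\{0\}$ carried onto $D_j$. On this product I extend by the device used for the extension across a simplex: multiply by a linear profile $f(t)=1-|t|/\delta$ that vanishes on $D_j\times\{\pm\delta\}$ and put $\tilde\omega_j(x,t)=f(t)\,\omega_j(x)$. Then $|\tilde\omega_j|=|f|\,|\omega_j|$ and $d\tilde\omega_j=f\,d\omega_j+df\wedge\omega_j$, so integrating $\int_{-\delta}^{\delta}|f|^p\,dt$ and $\int|f'|^p\,dt$ and using $|d\tilde\omega_j|^p\le 2^{p-1}\bigl(|f|^p|d\omega_j|^p+|f'|^p|\omega_j|^p\bigr)$ bounds the $L_p$ graph norm of $\tilde\omega_j$ by that of $\omega_j$; normalising the profile and the tube width $\delta$ makes the constant match the claimed bound, mirroring the simplex extension. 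Because $\tilde\omega_j$ vanishes on the tube boundary $D_j\times\{\pm\delta\}$, its zero‑extension to all of $B_1$ remains Lipschitz and lies in $\Omega^{*}_{p,\,p}(B_1)$, with no distributional boundary term appearing in $d\tilde\omega_j$, exactly as in the zero‑extension of a form on a ball inside a sphere.

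The extension is then $\tilde\omega=\sum_j\tilde\omega_j$, each summand taken to be zero outside its tube. Away from the origin the petals are disjoint, so for $\delta$ small the tubes overlap only in a neighbourhood of $0$; elsewhere the summands have disjoint supports, and summing $p$‑th powers reproduces $\sum_j\|\omega_j\|^p=\|\omega\|^p$. \emph{The main obstacle is precisely the behaviour near the common centre}: the tubular neighbourhoods of distinct petals are forced to overlap there, so on a petal $D_{j_0}$ the extra summands $\tilde\omega_j\big|_{D_{j_0}}$ ($j\neq j_0$), supported near $0$, must be suppressed to keep $\tilde\omega\big|_R=\omega$, and the overlaps must not inflate the constant beyond $1$ while $\tilde\omega$ stays Lipschitz across the seams. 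I would resolve this either by induction on the number of petals $m$, or by inserting a partition of unity subordinate to the tubes together with a radial cut‑off supported in a ball $\{|x|<r\}$ about $0$; since the graph norm of a form over $\{|x|<r\}$ tends to $0$ as $r\to0$ by absolute continuity of the integral, the central overlap contributes no increase to the estimate. Once the central region is handled, $\tilde\omega\big|_R=\omega$ and $\|\tilde\omega\|_{\Omega^{*}_{p,\,p}(B_1)}\le\|\omega\|_{\Omega^{*}_{p,\,p}(R)}$ follow by summation over the petals.
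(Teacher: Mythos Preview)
Your geometric picture of the bouquet is wrong, and this breaks the argument. You write that ``the petals meet only in $\{0\}$'', but an $n$-bouquet is a union of $n$-disks sitting in hyperplanes of $\mathbb{R}^{n+1}$ through a common centre; two distinct hyperplanes through the origin in $\mathbb{R}^{n+1}$ meet in an $(n-1)$-dimensional linear subspace, so for $n\ge 2$ any two petals $D_j$, $D_{j'}$ intersect along an entire $(n-1)$-ball, not just at the centre. Consequently the tubular neighbourhoods $D_j\times[-\delta,\delta]$ overlap along full $(n+1)$-dimensional regions about these $(n-1)$-balls, and this overlap does not concentrate near $0$. Your proposed fix (a radial cut-off near the origin, absolute continuity as $r\to 0$) addresses the wrong set: the defect of $\tilde\omega\big|_{D_{j_0}}-\omega_{j_0}$ is supported in a neighbourhood of $\bigcup_{j\neq j_0}(D_j\cap D_{j_0})$ inside $D_{j_0}$, whose $n$-dimensional measure does not tend to zero as $\delta\to 0$ independently of the data. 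So neither the restriction identity $\tilde\omega|_R=\omega$ nor the constant-$1$ estimate survives the summation.

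The paper avoids this by decomposing not $R$ but the \emph{complement} $B_1\setminus R=\coprod_i N_i$ into its connected components, which are the open ``wedges'' cut out of the ball by the hyperplanes. For each wedge one sets $M=R\cap\overline{N}_i$, observes that $\overline{N}_i$ is (bi-Lipschitz) a product $M\times[0,1]$ with $M$ as the $s=0$ slice, and extends by $\tilde\omega=(1-s)\,\omega$ exactly as in the simplex lemma. The wedges are disjoint by construction, so there is no overlap to repair, the restriction to $R$ is automatic, and the $p$-th powers add up petal-wise with the same constant as in the simplex case. If you want to salvage your approach you would have to replace the tubular neighbourhoods by these complementary sectors; once you do that you are essentially back to the paper's proof.
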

\begin{proof}
Assume that $ \omega \in \Omega^k_{p,\,p}(R)$. Due to the definition of an $n$-bouquet $B_1\setminus R$ is a number of disconnected sets which are contractible to a point.
$$
B_1\setminus R = \coprod N_i
$$
Fix some $i$ and let $M = R\cap \overline{N}_i$. So $M\times[0,\,1]$ is homeomorphic to $\overline{N}_i$ and  $M$ is a retract of $N_i$.
Then using the argument of {\it Lemma} 4 we can define a bounded map 
$$
\rho \colon\Omega^{k}_{p,\,p}(M) \to \Omega^{k}_{p,\,p}(M\times[0,\,1])
$$
as follows
$$
\omega \mapsto \tilde{\omega} = (1-s)\omega.  
$$ 
\end{proof}

\begin{theorem}
Let $K$ be a complex of bounded geometry and
a star $\Sigma_i$ of $K$ at a vertex $e_i$  is homeomorphic to
an n-bouquets. 
Then there exist operators $\mathcal R_i$ and $\mathcal A_i$ such that
the diagram
$$
\xymatrix{ \dots\ar[r]^-{d}&\Omega^{k-1}_{p,\,p}({ K}) \ar[rr]^-{d}\ar[d]^-{\mathcal{R}_i}
&&\Omega^{k}_{p,\,p}({ K})\ar[rr]^-{d}\ar[lldd]_{\mathcal{A}_i}\ar[d]^-{\mathcal{R}_i}
&&\Omega^{k+1}_{p,\,p}({ K})\ar[lldd]_{\mathcal{A}_i}\ar[r]^-{d}\ar[d]^-{\mathcal{R}_i}&\dots\\
\dots &S\mathscr{L}^{k-1}_p (K)\ar@{_(->}[d]&&S\mathscr{L}^{k}_p(K)\ar@{_(->}[d]&&S\mathscr{L}^{k+1}_p(K)\ar@{_(->}[d]&\dots\\
\dots\ar[r]_-{d}&\Omega^{k-1}_{p,\,p}(K)\ar[rr]_-{d}&&\Omega^k_{p,\,p}(K)\ar[rr]_-{d}&& \Omega^{k+1}_{p,\,p}(K)\ar[r]_-{d}&\dots
}
$$
has commutative squares in the category of Banach spaces ${\sf Ban}_{\infty}$. Moreover the following holds
$$
\mathcal R_i  - \mathrm{Id}_{\Omega^*_{p,\,p}} = d\mathcal A_i + \mathcal A_i d
$$
\end{theorem}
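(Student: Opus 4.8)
The plan is to build $\mathcal R_i$ and $\mathcal A_i$ out of the Euclidean de Rham operators of Section 3, transported to the ambient $(n+1)$-ball furnished by \emph{Lemma 8}. Fix a vertex $e_i$ and, using the hypothesis, identify the star $\Sigma_i$ with an $n$-bouquet $R$ sitting inside an $(n+1)$-ball $B_1$, so that $B_1\setminus R=\coprod_j N_j$ is the decomposition into contractible sectors from \emph{Lemma 8}. Composing a bi-Lipschitz homeomorphism $B_1\to {\bf B}_1\subset\mathbb R^{n+1}$ with the construction of Section 3, I would equip $B_1$ with operators $\tilde{\mathcal R_{\varepsilon}}$ and $\tilde{\mathcal A_{\varepsilon}}$ satisfying $\tilde{\mathcal R_{\varepsilon}}-\mathrm{Id}=d\tilde{\mathcal A_{\varepsilon}}+\tilde{\mathcal A_{\varepsilon}}d$ together with the bounds $\|\tilde{\mathcal R_{\varepsilon}}\|_p\le\tilde C(\varepsilon)$ and $\|\tilde{\mathcal A_{\varepsilon}}\|_p\le\tilde M(\varepsilon)$ coming from \emph{Lemma 1}.

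First I would fix the two maps connecting the bouquet to its ambient ball: a bounded extension $E\colon\Omega^{*}_{p,\,p}(R)\to\Omega^{*}_{p,\,p}(B_1)$ and the restriction $\rho={\rm res}_{R,\,B_1}$. In place of the extension $(1-s)\omega$ of \emph{Lemma 8} I would use the slightly sharpened version $E\omega=\chi(s)\,\pi^{*}\omega$ on each sector $N_j\cong M_j\times[0,\,1]$, where $\pi$ projects onto $M_j=R\cap\overline{N_j}$, the coordinate $s$ measures distance from $R$, and $\chi$ is a fixed Lipschitz cutoff with $\chi\equiv 1$ on a collar $[0,\,\delta]$ of $R$ and $\chi\equiv 0$ near $s=1$. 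Exactly as in \emph{Lemma 8} this $E$ is bounded, and by construction $\rho E=\mathrm{Id}$ while $\rho$ commutes with $d$. I then set
$$
\mathcal R_i\omega=\rho\,\tilde{\mathcal R_{\varepsilon}}\,E\omega\ \text{on}\ \Sigma_i\ (\text{and}\ \omega\ \text{otherwise}),\qquad \mathcal A_i\omega=\rho\,\tilde{\mathcal A_{\varepsilon}}\,E\omega\ \text{on}\ \Sigma_i\ (\text{and}\ 0\ \text{otherwise}).
$$
Boundedness in ${\sf Ban}_{\infty}$ is then immediate, each operator being a composite of bounded maps, so that $\|\mathcal R_i\|\le\|E\|\,\tilde C(\varepsilon)$ and $\|\mathcal A_i\|\le\|E\|\,\tilde M(\varepsilon)$; since $\tilde{\mathcal R_{\varepsilon}}$ takes values in smooth forms and $\rho$ of a smooth form is smooth on each petal, the image of $\mathcal R_i$ lands in $S\mathscr{L}^{*}_p(K)$.

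The substantive content is the homotopy identity and the commutativity $d\mathcal R_i=\mathcal R_i d$ of the squares. Applying $\rho$ to the relation on $B_1$ with input $E\omega$ and using $\rho E=\mathrm{Id}$ together with $\rho d=d\rho$, I obtain
$$
\mathcal R_i\omega-\omega-(d\mathcal A_i+\mathcal A_i d)\omega=\rho\,\tilde{\mathcal A_{\varepsilon}}\,(dE-Ed)\,\omega,
$$
and in the same way $d\mathcal R_i\omega-\mathcal R_i d\omega=\rho\,\tilde{\mathcal R_{\varepsilon}}\,(dE-Ed)\,\omega$. Thus the whole statement reduces to the single commutator $(dE-Ed)\omega$, which on each sector equals $\chi'(s)\,ds\wedge\pi^{*}\omega$ and is therefore supported in $\{s\ge\delta\}$, i.e. at distance at least $\delta$ from $R$. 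This localisation is exactly what the cutoff $\chi$ was designed to produce, and it is the crux of the argument: I would invoke the support-spreading estimate already used in the proof of \emph{Lemma 2} — the operators $\tilde{\mathcal R_{\varepsilon}}$, $\tilde{\mathcal A_{\varepsilon}}$ enlarge supports by at most $O(\varepsilon)$ — to conclude that for every $\varepsilon<\delta$ the forms $\tilde{\mathcal R_{\varepsilon}}(dE-Ed)\omega$ and $\tilde{\mathcal A_{\varepsilon}}(dE-Ed)\omega$ stay supported away from $R$, so that their restrictions under $\rho$ vanish. The main obstacle is precisely running this spreading estimate uniformly near the singular centre of the bouquet; this is why the estimate must be carried out on the compact set $\mathrm{supp}(\chi'(s))$, which lies in the interior of $B_1$ at distance at least $\delta$ from $R$, where the flow displacement is uniformly $O(\varepsilon)$. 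Once both commutator terms vanish, the homotopy relation $\mathcal R_i-\mathrm{Id}_{\Omega^{*}_{p,\,p}}=d\mathcal A_i+\mathcal A_i d$ and the commutativity of the $\mathcal R_i$-squares hold on $\Sigma_i$, while outside $\Sigma_i$ they are trivial; the remaining rows and inclusions of the diagram are identical to those of \emph{Theorem 4}, completing the proof.
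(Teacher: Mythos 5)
Your overall skeleton — extend the form from the bouquet $R$ to the ambient ball $B_1$, apply the Euclidean operators $\tilde{\mathcal R}_{\varepsilon}$, $\tilde{\mathcal A}_{\varepsilon}$ of Section 3, then restrict back to $R$, with $\mathcal R_i$ equal to the identity and $\mathcal A_i$ equal to zero off the star — is the same as the paper's, which defines the star operators as $\varphi^{-1}\rho^{-1}\mathcal A_i\rho\varphi$ and $\varphi^{-1}\rho^{-1}\mathcal R_i\rho\varphi$ with $\rho$ the extension of \emph{Lemma~8} and $\rho^{-1}$ the restriction to $R$. In fact your treatment of the homotopy identity is \emph{better} than the paper's: you correctly isolate the obstruction, namely that the extension does not commute with $d$, reduce everything to the commutator $(dE-Ed)\omega=\chi'(s)\,ds\wedge\pi^{*}\omega$, and kill it with the collar cutoff plus the $O(\varepsilon)$ support-spreading estimate already implicit in the proof of \emph{Lemma~2}. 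The paper's extension $(1-s)\pi^{*}\omega$ has $d\rho-\rho d=-ds\wedge\pi^{*}\omega$ supported up to $R$ itself, and the paper simply never verifies the relation $\mathcal R_i-\mathrm{Id}=d\mathcal A_i+\mathcal A_i d$; your cutoff repairs this.

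There is, however, a genuine gap at the step you dismiss as ``immediate'': the restriction $\rho$ is a trace from the $(n{+}1)$-dimensional ball onto the $n$-dimensional, measure-zero set $R$, and such a trace is neither defined nor bounded on $\Omega^{k}_{p,\,p}(B_1)$, a space controlling only $\omega$ and $d\omega$ in $L_p$. Your estimates $\|\mathcal R_i\|\le\|E\|\,\tilde C(\varepsilon)$ and $\|\mathcal A_i\|\le\|E\|\,\tilde M(\varepsilon)$ tacitly treat $\rho$ as a norm-one operator, which it is not. For the $\mathcal R_i$ column this is harmless, since $\tilde{\mathcal R}_{\varepsilon}$ smooths ($\Omega^k_p\to\Omega^k_\infty$ on compacta, \emph{Lemma~2} of the paper) and restriction of a smooth form is unproblematic; but $\tilde{\mathcal A}_{\varepsilon}$ does \emph{not} smooth — it maps $L_p$ only to $L_p$ — so $\rho\,\tilde{\mathcal A}_{\varepsilon}E\omega$ is a priori meaningless, and with it the definition of $\mathcal A_i$, the commutation $\rho d=d\rho$ on the relevant image, and hence the homotopy identity itself. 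Closing this is precisely the analytic content of the paper's proof: the computation $\iota_X((1-s)\pi^{*}\omega)=(1-s)\iota_V(\pi^{*}\omega)$ and the factorization $\mathcal A_i\tilde\omega=C(s)\cdot(\text{an $n$-dimensional regularization of }\omega)$ show that on extended, product-type forms the $\mathcal A$-image retains a transverse product structure, i.e.\ is continuous in $s$ as an $L_p(M)$-valued map, so its trace at $s=0$ exists with controlled norm. Your $E\omega=\chi(s)\pi^{*}\omega$, being $s$-independent on the collar, should support an analogous argument (pullbacks of $\pi^{*}\omega$ under the flow vary continuously in $s$ in $L_p(M)$, and the $(v,t)$-integration preserves this), but that argument must actually be carried out; as written, the boundedness of $\mathcal A_i$ is asserted where the paper proves it.
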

\begin{proof}
Assume that $X \in T(M\times[0,\,1])$ then 
$$
X = a\frac{\partial}{\partial s} + V,~V \in T(M)
$$
and
$$
\iota_X((1-s)\omega) = (1-s)\omega \iota_X(\omega) = (1-s)\iota_V(\omega)
$$
\begin{align*}
\mathcal A_i \tilde{\omega} = \int\limits_{{\mathbb R}^{n+1}} \left(\int\limits^{1}_{0} \iota_{\frak X_{\varepsilon x}(\mathfrak s_{\varepsilon xt}(t))}
(\mathfrak s^{*}_{\varepsilon xt}\tilde{\omega}) dt\right) \cdot \tau(x)\\
= C(s)\int\limits_{{\mathbb R}^{n}} \left(\int\limits^{1}_{0} \iota_{\frak X_{\varepsilon v}(\mathfrak s_{\varepsilon xt}(t))}(\mathfrak s^{*}_{\varepsilon vt}{\omega}) dt\right) \cdot \tau(v)
\end{align*}
It follows that
$$
\|\mathcal A_i \tilde{\omega}\|_{\Omega_{p,\,p}(M\times[0,\,1])} = \| C(s)\mathcal A_i \omega \|_{\Omega_{p,\,p}(M\times[0,\,1])} = C\|\mathcal A_i \omega \|_{\Omega_{p,\,p}(M)}
$$
and so 
$$\|\mathcal A_i \tilde{\omega}\|_{\Omega_{p,\,p}(B_1)} = C \|\mathcal A _i\tilde{\omega}\|_{\Omega_{p,\,p}(R)}.$$
Let a star $\Sigma_i$ of complex $K$ is bi-Lipschitz homeomorphic to  an $n$-bouquet $R$
$$
\varphi \colon \Sigma_i \to R
$$
As above we have  a bounded map
$$
\rho \colon\Omega^{k}_{p,\,p}(R) \to \Omega^{k}_{p,\,p}(B_1)
$$
It was shown that there exists a bounded map
$$
\rho^{-1} \mathcal A_i \rho 
$$
moreover 
$$
\rho^{-1} \mathcal R_i \rho 
$$
is bounded as well since $\mathcal R_i \rho (\omega) \in S{\mathscr L}^{k}_p(B_1)$.
And as a result we can define 
$$
\varphi^{-1}\rho^{-1} \mathcal A_i \rho  \varphi\colon\Omega^{k}_{p,\,p}(\Sigma_i) \to \Omega^{k-1}_{p,\,p}(\Sigma_i)
$$
and
$$
\varphi^{-1}\rho^{-1} \mathcal R_i \rho  \varphi\colon\Omega^{k}_{p,\,p}(\Sigma_i) \to S{\mathscr L}^{k}_p(\Sigma_i).
$$

In the light of the above de Rham regularization preserves all established properties for simlicial complexes with stars which are homeomorphic to  $n$-bouquets. 
\end{proof}
\end{appendices}

\end{document}